\theoremstyle{plain}
\newtheorem{thm}{Theorem}[section]
\newtheorem{prop}[thm]{Proposition}
\newtheorem{lemma}[thm]{Lemma}
\newtheorem{cor}[thm]{Corollary}
\theoremstyle{definition}
\newtheorem{defn}[thm]{Definition}
\theoremstyle{remark}
\newtheorem{rem}[thm]{Remark}
\numberwithin{equation}{section}
\title{A representation for algebraic $K$-theory of
quasi-coherent modules over affine spectral schemes} 
\date{\today} 
\author{Mariko Ohara}
\address{Department of Mathematical Sciences, Shinshu University, 3-1-1, Asahi, Matsumoto City, JAPAN, 390-8621. 
}  
\email{primarydecomposition@gmail.com}
\subjclass[2010]{18E99 (primary), 19D10 (secondary)}
\keywords{infinity category, derived algebraic
geometry, $K$-theory.}
\newcommand{\Spec}{\mathrm{Spec}\,}
\newcommand{\Hom}{\mathrm{Hom}}
\newcommand{\End}{\mathrm{End}}
\newcommand{\sSet}{\mathrm{Set}_{\Delta}}
\newcommand{\Cat}[1]{\mathrm{Cat}_{#1}}
\newcommand{\Sym}{\mathrm{Sym}}
\newcommand{\Fin}{\mathcal{F}\mathrm{in}_*}
\newcommand{\Map}{\mathrm{Map}}
\newcommand{\Mod}{\mathrm{Mod}}
\newcommand{\CAlg}{\mathrm{CAlg}}
\newcommand{\Fun}{\mathrm{Fun}}
\newcommand{\N}{\mathrm{N}}
\newcommand{\spasce}{\operatorname{Sp}}
\newcommand{\Ind}{\mathrm{Ind}}
\newcommand{\Shv}{\mathcal{S}hv}
\newcommand{\E}{\mathbb{E}_{\infty}}
\newcommand{\bysame}{\leavemode\hbox to3em{\hrulefill}\,}
\begin{document}
\thispagestyle{empty}
\begin{abstract}
In this paper, we study $K$-theory
 of spectral schemes by using locally free sheaves. Let us regard the
 $K$-theory as a functor $K$ on affine spectral schemes. Then, we prove
 that the group completion $\Omega B^{\mathcal{G}} (B^{\mathcal{G}}GL)$
 represents the sheafification of $K$ with respect to Zariski
 (resp. Nisnevich) topology $\mathcal{G}$, where $B^{\mathcal{G}}GL$ is a classifying space of a colimit of affine
 spectral schemes $GL_n$. 
\end{abstract}

\maketitle

\section{Introduction}
An $\infty$-category is a notion of categories up to coherent homotopy. 
The spectral algebraic geometry in terms of $\infty$-category, introduced by
Lurie \cite{DAG5} \cite{DAG7}, is a generalization of algebraic geometry.

In this paper, we study the $K$-theory on spectral schemes. 
For the $K$-theory of a category of
projective modules of finite rank in the sense of spectral algebraic
geometry, we construct an object in the
$\infty$-category which represents the
$K$-theory. In Section (1.1), we explain the
representation of the $K$-theory in the classical algebraic geometry
proved by Morel and Voevodsky~\cite{MV} which motivates us to prove
our main theorem (Theorem~\ref{main1}). 

\subsection{Background of this paper}
Morel and Voevodsky showed the following result of
the $K$-theory in algebraic geometry.

Let $S$ be an affine Noetherian scheme of finite Krull dimension. 
Let $\mathcal{H}(S)$ be a category defined in \cite[Definition 2.1]{MV},
which is arising from certain simplicial
sheaves on a category of smooth schemes of finite type over $S$
equipped with Nisnevich topology. 
\begin{thm}[\cite{MV} Proposition 3.9]\label{mv39}
For the ordinary affine group schemes $GL_n$ for $n \ge 0$, let
 $B(\coprod_n BGL_n)$ be a classifing sheaf in $\mathcal{H}(S)$. 
Let $\Sigma_s$ 
be a simplicial suspension functor on $\mathcal{H}(S)$
 and $R\Omega$ its right adjoint. Then, for a
quasi-projective smooth scheme $X$ over $S$, there is a canonical isomorphism
\[
 \Hom_{\mathcal{H}(S)}(\Sigma^k_s(X), (R\Omega)B(\coprod_n BGL_n)) \cong
 \pi_k K(X), 
\]
where $K(X)$ is the $K$-theory of the ordinary category of locally free sheaves on $X$. In other
 words, $K(X)$ is represented by the object $(R\Omega)B(\coprod_n
 BGL_n))$ in the category $\mathcal{H}(S)$.  
\end{thm}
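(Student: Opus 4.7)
The plan is to combine the loop--suspension adjunction in $\mathcal{H}(S)$ with the group completion theorem, the main geometric input being the identification of mapping sets into $BGL_n$ with isomorphism classes of vector bundles. First, I would establish that for a smooth quasi-projective $X$ over $S$,
\[
[X, BGL_n]_{\mathcal{H}(S)} \;\cong\; \{\text{rank } n \text{ vector bundles on } X\}/\mathrm{iso}.
\]
This requires two ingredients: that the classifying simplicial Nisnevich sheaf $BGL_n$ classifies $GL_n$-torsors (equivalently, rank $n$ bundles, since vector bundles satisfy Nisnevich descent), and that $\mathbb{A}^1$-localization does not collapse this moduli, for which one appeals to $\mathbb{A}^1$-invariance of vector bundles on smooth affines (Lindel's theorem and the affine Bass--Quillen property).

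Next, equipping $\coprod_n BGL_n$ with the $H$-space structure arising from the direct sum $\oplus\colon GL_n \times GL_m \to GL_{n+m}$, the group completion theorem would give, sectionwise on a smooth affine $X$,
\[
\Omega B\bigl(\coprod_n BGL_n\bigr)(X) \;\simeq\; \Z \times BGL_\infty(X)^+,
\]
the right-hand side being Quillen's $K$-theory space $K(X)$. Combined with the first step, this identifies $\pi_k$ of the mapping space from $X$ to $\Omega B(\coprod_n BGL_n)$ with $K_k(X)$. The conclusion then follows from the $\Sigma_s \dashv R\Omega$ adjunction, giving
\[
\Hom_{\mathcal{H}(S)}\bigl(\Sigma^k_s X,\, (R\Omega) B(\coprod_n BGL_n)\bigr) \;\cong\; \pi_k \Map_{\mathcal{H}(S)}\bigl(X, \Omega B(\coprod_n BGL_n)\bigr) \;\cong\; \pi_k K(X).
\]

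The hardest step is the first: showing that $[X, BGL_n]_{\mathcal{H}(S)}$ is genuinely computed by vector bundles \emph{after} $\mathbb{A}^1$--Nisnevich localization. This relies on Nisnevich descent and $\mathbb{A}^1$-invariance for both vector bundles and $K$-theory (Quillen on regular schemes, Thomason--Trobaugh). A further subtlety is verifying that group completion is compatible with the local-to-global passage (sheafification in the Nisnevich topology); this is not automatic, and it is where the smoothness and quasi-projectivity hypotheses on $X$ become essential, since they guarantee that the pointwise $K$-theory and its sheafification agree and that the classifying construction behaves well fibrewise.
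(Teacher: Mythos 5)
This statement is not proved in the paper at all: it is quoted from Morel--Voevodsky as background motivation, and the paper's only substantive comment on its proof is that it hinges on the cofinality theorem in Quillen's $K$-theory (which is precisely why the paper's own generalization, Theorem~\ref{m1}, is formulated for $\Mod_R^{\infty proj}$ rather than for finitely generated projective modules). Measured against that, your outline misses exactly this crux. The step ``the group completion theorem gives, sectionwise on a smooth affine $X$, $\Omega B(\coprod_n BGL_n)(X) \simeq \Z \times BGL_\infty(X)^+$, the right-hand side being Quillen's $K$-theory space $K(X)$'' is false as stated: the monoid $\coprod_n BGL_n(A)$ only records \emph{free} modules, so its group completion has $\pi_0 \cong \Z$, whereas $K_0(X)$ is in general strictly larger even for smooth affine $X$; and the identification of the higher homotopy groups of $\Z \times BGL_\infty^+$ with $K_i$ of the category of all finitely generated projective modules is exactly the content of the cofinality theorem, which you never invoke. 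The correct shape of the argument is local rather than sectionwise: the natural map from $\Z \times BGL_\infty^+$ to the $K$-theory presheaf is a Nisnevich-local weak equivalence (local rings have trivial $K_0$, cofinality handles degrees $\geq 1$), and then Nisnevich descent for $K$-theory (Brown--Gersten/Thomason--Trobaugh) together with $\mathbb{A}^1$-invariance over a regular base shows that the sections of the fibrant replacement over $X$ recover $K(X)$, after which the $\Sigma_s \dashv R\Omega$ adjunction finishes as you say. You do cite descent and $\mathbb{A}^1$-invariance, but you deploy them to justify a sectionwise identification that cannot hold on $\pi_0$, and the free-versus-projective comparison is the gap.

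A secondary problem is your ``hardest step,'' the claim $[X, BGL_n]_{\mathcal{H}(S)} \cong \{\text{rank } n \text{ bundles}\}/\mathrm{iso}$. This unstable statement is neither needed for the theorem nor available from the inputs you cite: it fails for non-affine $X$, and even for smooth affine schemes it is a much later and harder result (Morel, Asok--Hoyois--Wendt, with hypotheses on the base), not something Morel--Voevodsky prove or use. Their argument, and any repair of yours, should work only with the group-completed (stable) object $\Omega B(\coprod_n BGL_n)$ and the local comparison to the $K$-theory presheaf described above, so you should drop the unstable classification step rather than try to prove it.
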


Since a simplicial presheaf is a model of an $\infty$-categorical
sheaf, this theorem may be interpreted in terms of spectral algebraic geometry. We generalize this result to a certain $\infty$-category of sheaves as follows. 

By an $\infty$-category with $w^{\infty}$-cofibrations we mean a pointed
$\infty$-category with a class of morphisms (called $w^{\infty}$-cofibrations)
satisfying certain conditions (see Definition~\ref{wcof} for details).  
Let $\mathcal{C}$ be an $\infty$-category with $w^{\infty}$-cofibrations. We define the algebraic $K$-theory space of $\mathcal{C}$ by
\[
 K(\mathcal{C}) = \Omega |S_{\bullet}(\mathcal{C})|,
\]
where $S_{\bullet}$ is the $S$-construction (cf. \cite{BGT-End}) 
 and $|-|$ is the geometric realization. 

Let $R$ be a connective $\E$-ring, and $\CAlg^{cn}$ the
$\infty$-category of connective $\E$-rings. 
Let $\Mod_R^{\infty proj}$ be an
$\infty$-category of projective $R$-modules of finite rank which we recall in
Definition~\ref{p0}. It becomes an $\infty$-category with
$w^{\infty}$-cofibrations by Definition~\ref{ssplit}.

We denote by $\CAlg^{\mathcal{G}}$ an
$\infty$-category $\CAlg^{cn}$ equipped with either the Zariski topology
or the Nisnevich topology, and by $\Spec^{\mathcal{G}} R$ an object
in the essential image of Yoneda functor $\CAlg^{\mathcal{G}} \to
\Shv(\CAlg^{\mathcal{G}})$, where $\Shv(\CAlg^{\mathcal{G}})$ is the $\infty$-category of sheaves on
$\CAlg^{\mathcal{G}}$ given in Section $2$. 

Let $\widehat{\mathcal{S}}$ be the $\infty$-category of not-necessary
small spaces.  
We define a functor 
\begin{equation}\label{func}
 K : (\CAlg^{\mathcal{G}})^{op} \to \widehat{\mathcal{S}}
\end{equation}
which carries a spectral scheme $\Spec^{\mathcal{G}} R$ to
the $K$-theory $K(\Mod_R^{\infty proj})$. Here, we denote by $(-)^{op}$ the
opposite $\infty$-category. 

We denote by $\widetilde{(-)}^{\mathcal{G}} $ the
sheafification from the $\infty$-category of functors on
$(\CAlg^{\mathcal{G}})^{op}$ to $\Shv(\CAlg^{\mathcal{G}})$ which we recall in Definition~\ref{Shff}.

Let $GL_n$ be an object in $\Shv(\CAlg^{\mathcal{G}})$. 
Let $B^{\mathcal{G}}GL = \coprod_{n \in \mathbb{N}} B^{\mathcal{G}}GL_n$ be the coproduct of the
classifying sheaf $B^{\mathcal{G}}GL_n$ of $GL_n$, where
$B^{\mathcal{G}}$ is a functor given by taking classifying sheaf which
we recall in Section $4$. 
Let $\Omega B^{\mathcal{G}}$ be a functor on $\Shv(\CAlg^{\mathcal{G}})$
defined in Definition~\ref{0mg}. We denote by $\Omega B^{\mathcal{G}} (B^{\mathcal{G}}GL)$ the group completion on (cf. Definition~\ref{frpr}).

\begin{thm}[cf. Theorem~\ref{m1}]
\label{main1}
Let $\Map_{\Shv_{\widehat{\mathcal{S}}}(\CAlg^{\mathcal{G}})}(-, -)$
 denote the mapping space of the $\infty$-category
 $\Shv_{\widehat{\mathcal{S}}}(\CAlg^{\mathcal{G}})$ which we recall in
 Definition~\ref{mapp}. 
There is an equivalence 
\[
 \Map_{\Shv_{\widehat{\mathcal{S}}}(\CAlg^{\mathcal{G}})}(\Spec^{\mathcal{G}} R, \,
 \Omega B^{\mathcal{G}} (B^{\mathcal{G}}GL)) \simeq \widetilde{K}^{\mathcal{G}}(\Mod_R^{\infty proj}). 
\]
\end{thm}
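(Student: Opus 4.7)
The plan is to combine the Yoneda lemma with a Segal-type presentation of $K$-theory, then commute sheafification with the group-completion construction $\Omega B$. By the $\infty$-categorical Yoneda lemma applied in $\Shv_{\widehat{\mathcal{S}}}(\CAlg^{\mathcal{G}})$, for any sheaf $F$ one has a natural equivalence
\[
\Map_{\Shv_{\widehat{\mathcal{S}}}(\CAlg^{\mathcal{G}})}(\Spec^{\mathcal{G}} R, F) \simeq F(R).
\]
Taking $F = \Omega B^{\mathcal{G}}(B^{\mathcal{G}}GL)$ reduces the theorem to identifying this sheaf, evaluated at $R$, with $\widetilde{K}^{\mathcal{G}}(\Mod_R^{\infty proj})$, so the problem becomes a calculation of sheaves on $\CAlg^{\mathcal{G}}$ rather than a statement about a single ring.

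Next, I would establish a presheaf-level equivalence
\[
K(\Mod_R^{\infty proj}) \simeq \Omega B\left(\coprod_{n \ge 0} BGL_n(R)\right),
\]
natural in $R \in \CAlg^{cn}$. Every finite projective $R$-module is Zariski-locally free, so the inclusion of the subgroupoid of free modules $\coprod_n BGL_n(R)$ into the core $\infty$-groupoid of $\Mod_R^{\infty proj}$ becomes an equivalence after $\mathcal{G}$-sheafification. Applying the $S_\bullet$-construction to $\Mod_R^{\infty proj}$ and taking loops then yields, by the $\infty$-categorical group-completion theorem applied to the symmetric monoidal core groupoid, the displayed right-hand side.

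Third, I would commute sheafification with $\Omega B$. Because $\widetilde{(-)}^{\mathcal{G}}$ is left adjoint to the inclusion of sheaves into presheaves and the target is an $\infty$-topos, it is left exact (hence preserves finite limits, in particular $\Omega$) and preserves all small colimits (in particular coproducts and the bar construction $B$). Combined with the definition $B^{\mathcal{G}} GL_n = \widetilde{BGL_n}^{\mathcal{G}}$, one obtains
\[
\widetilde{K}^{\mathcal{G}}(\Mod_R^{\infty proj}) \simeq \Omega B^{\mathcal{G}}(B^{\mathcal{G}} GL)(R),
\]
which together with the Yoneda reduction finishes the argument.

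The hard part will be the presheaf-level identification of $K(\Mod_R^{\infty proj})$ with $\Omega B(\coprod_n BGL_n(R))$: one must run the McDuff--Segal / Quillen group-completion argument inside an $\infty$-category with $w^{\infty}$-cofibrations, so it is necessary to verify that the $S_\bullet$-construction of Definition~\ref{ssplit} recovers the bar construction of the core symmetric monoidal $\infty$-groupoid, and that the resulting group completion matches the one used to define $\Omega B^{\mathcal{G}}$ in Definition~\ref{0mg} and $\Omega B^{\mathcal{G}}(B^{\mathcal{G}}GL)$ in Definition~\ref{frpr}. A secondary point is verifying that the Zariski and Nisnevich sheafifications on $\CAlg^{cn}$ are indeed left exact, which follows from their being topological localizations in the sense of Lurie.
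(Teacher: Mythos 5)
Your outline is essentially sound and rests on the same three ingredients as the paper's proof of Theorem~\ref{m1}: the Yoneda reduction to evaluating the group-completed sheaf at $R$; the identification of $K(\Mod_R^{\infty proj})$ with the group completion $\Omega B((\Mod_R^{\infty proj})^{\simeq})$ of the core (this is Theorem~\ref{addit} and Corollary~\ref{add}, which the paper quotes from Fiore--L\"uck--Pieper/Lurie rather than re-proves; you correctly isolate it as the hard step); and the identification of $B^{\mathcal{G}}GL_n$ with the sheaf of rank-$n$ projective modules via Zariski-local triviality (Proposition~\ref{bgl}). Where you genuinely diverge is in the order of operations: you compare sheaves first and apply Yoneda only to the already group-completed sheaf, and you justify commuting sheafification past $\Omega B$ by left exactness plus preservation of colimits. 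This bypasses the paper's pointwise analysis of the infinite coproduct $\coprod_n B^{\mathcal{G}}GL_n$ evaluated on $\Spec^{\mathcal{G}} R$ — the quasi-compactness argument of Proposition~\ref{321} and the Zariski-connectedness decomposition of Lemma~\ref{zcon2} feeding into Proposition~\ref{qclf} — and it replaces the adjunction argument of Proposition~\ref{gpcmp}; the modest price is checking that the Zariski and Nisnevich localizations are left exact, which you note.

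One claim needs correcting as stated: the displayed ``presheaf-level equivalence'' $K(\Mod_R^{\infty proj}) \simeq \Omega B\bigl(\coprod_{n} BGL_n(R)\bigr)$ is false before sheafification. The right-hand side only sees free modules, so its $\pi_0$ is $\mathbb{Z}$, whereas $\pi_0 K(\Mod_R^{\infty proj})$ is the group completion of the monoid of equivalence classes of finite-rank projectives (already different for a discrete Dedekind ring with nontrivial class group). This free-versus-projective discrepancy is exactly the cofinality issue the remarks after Theorem~\ref{main1} are designed to sidestep. Your argument survives because you only use the comparison after $\mathcal{G}$-sheafification, as your own next sentence indicates; the presheaf-level statement you actually need, and should state, is $K(\Mod_R^{\infty proj}) \simeq \Omega B((\Mod_R^{\infty proj})^{\simeq})$, with the passage to $\coprod_n BGL_n$ happening only at the sheaf level. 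With that rewording, and with the additivity theorem cited for the $S_\bullet$-versus-bar comparison as in the paper, the proposal goes through.
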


\subsection*{Remarks for Theorem~\ref{main1}}
It is known that the sheaf $\widetilde{K}^{\mathcal{G}}$ is representable by an object in
$\Shv_{\widehat{\mathcal{S}}}(\CAlg^{\mathcal{G}})$ by
the brown representability theorem in the sense of $\infty$-category~\cite[Proposition 5.5.2.7]{HT}. 
The importance is that we give a concrete object which
represents $\widetilde{K}^{\mathcal{G}}$.

Morel and Voevodsky used the cofinality theorem in Quillen's $K$-theory
in the proof of Theorem~\ref{mv39}. Since the cofinality theorem is not
established in the $K$-theory of $\infty$-categories, their proof cannot
be applied to our case directly. To avoid this problem, we treat an $\infty$-category $\Mod_R^{\infty proj}$ instead of the
finitely generated projective modules.

\subsection{Outline of this paper}
This paper is organized as follows. 
In Section $2$, we introduce the
terminology of an $\infty$-category endowed with Grothendieck topology
and sheaves on those $\infty$-category. 
In Section $3$, we describe the group completion functor on
$\Shv(\CAlg^{\mathcal{G}})$ explicitly in Proposition~\ref{gpcmp}. 
In Section $4$, we show the correspondence between the value of the
affine group scheme $GL_n$ on $S$ and the automorphisms of $S^n$, where
$S$ is an $\E$-ring in Proposition~\ref{GLgp}. We also demonstrate that
the classifying sheaf $B^{\mathcal{G}}GL_n$ is equivalent to the sheaf
of projective modules of finite rank in Proposition~\ref{bgl}. 
We also define the notion of Zariski connectedness and relate a certain
mapping space of $B^{\mathcal{G}}GL_n$ is equivalent to the sheaf
of projective modules of finite rank in Proposition~\ref{qclf}. 
In Section $5$, we recall the several notion and proposition with
respect to $K$-theory. 
In Section $6$, by using these results, we prove Theorem~\ref{m1}.

\subsubsection*{Acknowledgement}
The author would like to express deeply her thanks to Professor Nobuo Tsuzuki
for his valuable advice and checking this paper. The author would like to express her thanks to Professor Satoshi Mochizuki for his valuable comments on algebraic
$K$-theory, especially the resolution theorem and the cofinality, and
for reading this paper. The author also would like to express her thanks to Professor Yuki Kato for valuable
comments to the author. 

\section{Preliminary}
We fix the universe $\mathbb{U}$ such that $\mathbb{N} \in
\mathbb{U}$. 
We define the $\Cat{\infty}$ by
$\mathbb{U}$-small $\infty$-category, which is locally
$\mathbb{U}$-small.

Although there are a lot of languages of higher category theory, we use
the same notation in Lurie's book~\cite{HT} and papers~\cite{HA}, \cite{DAG5} and \cite{DAG8}. 


Let $\Fun(-, \, -)$ be an $\infty$-category of functors. 
For a pair of functors $f : \mathcal{C} \to \mathcal{D}$ and $g :
 \mathcal{D} \to \mathcal{C}$ between $\infty$-categories, according to
 \cite[Proposition 5.2.2.8]{HT}, we say that
 the functor $f$ is a left adjoint to $g$ (resp. $g$ is a right adjoint to $f$) if there exists a
 unit map $u: id_{\mathcal{C}} \to g \circ f$ given in \cite[Defnition 5.2.2.7]{HT}. 

Let $\CAlg$ be an $\infty$-category of $\E$-rings. We denote by
 $\mathbb{S}$ the initial object in $\CAlg$, which is called the sphere
 spectrum. 
We say that a spectrum $E$ is {\it connective} if $\pi_n E \simeq 0$ for $n <
0$. We denote by $\CAlg^{cn}$ a full $\infty$-subcategory of $\CAlg$
 which consists of connective $\E$-ring. 

For an $\E$-ring $R$, we also have an
$\infty$-category $\Mod_R$ which is called the {\it
$\infty$-category of $R$-module} of $\mathrm{Sp}$~\cite[Section
4.2]{HA}. 
Since the tensor product on the $\infty$-category of spectra is compatible with the geometric realizations~\cite[Corollary 4.8.2.19]{HA}, $\Mod_R$ becomes the
 symmetric monoidal $\infty$-category by \cite[Theorem 4.5.2.1]{HA}. We
 denote by $\otimes_R$ the tensor product on $\Mod_R$. 

Let $R$ be an $\E$-ring and $a \in \pi_0 R$ an element. The localization
 of $R$ with respect to $a$, which is denoted by $R[a^{-1}]$, is an
 $\E$-ring (see \cite[Remark 2.9]{DAG7} and \cite[7.2.4]{HA}). 
\begin{defn}\label{p0}
 Let $R$ be a connective $\E$-ring and $M$ an $R$-module. 
\begin{enumerate}[(i)]
\item We say that $M$ is free of rank $n$ if $M \simeq R^{\oplus n}$. 
\item We say that $M$ is finitely generated projective if it is a
retract of finitely generated free $R$-modules. 
\item We say that $M$ is projective of rank $n$ if it is finitely
      generated projective and we can choose
      elements $x_1, \cdots , x_m \in \pi_0 R$ such that they generate
      the unit ideal and each localization $M[x_i^{-1}]$ is a free
      module of rank $n$ over $R[x_i^{-1}]$. 

We denote by $\Mod_R^{nfree}$ (resp. $\Mod_R^{nproj}$) an
 $\infty$-category of free (resp. projective) $R$-modules of rank $n$. 
We denote by $\Mod_R^{proj}$ the $\infty$-category of
finitely generated projective $R$-modules.  
\end{enumerate}
\end{defn}
\begin{rem}
Let $\Mod_R^{\infty proj}$ be an $\infty$-category of projective
 $R$-modules of finite rank. In general, $\Mod_R^{\infty proj}$ and
 $\Mod_R^{proj}$ is not equivalent. 
\end{rem}

As in \cite{HT}, \cite{HA} and \cite{ba}, to treat certain size of limits and colimits, we need to enlarge the universe and fix the size of universes properly. 

We adopt the axiom of universes which allows us to consider that every
cardinal can be strictly upper bounded by a strongly inaccessible
cardinal. Then, there exists a bijection between strongly inaccessible cardinals and universes, and thus we can take a suitable enlargement of universe which we need. 

By the axiom of universe, there exists an enlargement of universes $\mathbb{U} \in \mathbb{V}$ such that every $\mathbb{U}$-small object is also
$\mathbb{V}$-small, so that $\Cat{\infty}$ becomes a $\mathbb{V}$-small
category. 

In this paper, sometimes we need to treat the $\infty$-category $\CAlg$
as small, and to regard the maximal $\infty$-groupoid
$(\Mod_R)^{\simeq}$ of $\Mod_R$ as a space. We also need to treat the
size of limits and colimits. In these
cases, we enlarge the universe as the following definition and proceed the arguments. 
\begin{defn}\label{big}
Let $\mathcal{S}$ be the $\infty$-category of $\mathbb{U}$-small spaces. 
Throughout
this paper, we enlarge the unverse $\mathbb{U} \in \mathbb{V}$ such that $\Mod_R$ is a $\mathbb{V}$-small $\infty$-category. we use the notation $\widehat{\mathcal{S}}$ for
$\mathcal{S}$ and $\widehat{\Cat{\infty}}$ for $\Cat{\infty}$ after
changing the universe from $\mathbb{U}$ to $\mathbb{W}$. 
\end{defn}
\subsection{Sheaves and spectral affine schemes}
Let $\mathcal{X}$ be an $\infty$-category equipped with the Grothendieck
topology in \cite[Definition 6.2.2.1]{HT} and $\mathcal{C}$ an
$\infty$-category which admits limits. 
We denote by $\Shv_{\mathcal{C}}(\mathcal{X})$ an $\infty$-category of
$\mathcal{C}$-valued sheaves on $\mathcal{X}$ in \cite[Definition 6.2.2.6]{HT}. 
\begin{defn}\label{Shff}
Let $\widetilde{(-)} : \Fun(\mathcal{X}^{op}, \mathcal{S}) \to
\Shv_{\mathcal{S}}(\mathcal{X})$ be an localization defined by \cite[Definition 6.2.2.6]{HT}, which is a left
adjoint of the inclusion. For an object $F$ of $\Fun(\mathcal{X},
\mathcal{S})$, we say that $\widetilde{F}$ is a sheafification of $F$.    
\end{defn}
\begin{rem}[\cite{HT} Construction 6.2.2.9, Remark 6.2.2.12]\label{6229}
For an $\infty$-category $\mathcal{C}$ equipped with a Grothendieck
 topology, and a presheaf $F$ on $\mathcal{C}$, the sheafification
 $\widetilde{F}$ of $F$ is given by the following formula: for any $C
 \in \mathcal{C}$, 
\[
 \widetilde{F}(C) = \mathrm{colim}_{\mathcal{C}_{/C}^{(0)}}
 \mathrm{lim}_{C' \in \mathcal{C}_{/C}^{(0)}} F(C'), 
\]
where the first limit is the limit of the simplicial diagram associated
 to $C' \to C$~\cite[Corollary 6.2.3.5]{HT} and the second colimit is
 taken over the collection of covering sieves on $C$. 
\end{rem}

Let $\mathbb{S}$ be a sphere spectrum. Here is a list of morphisms of
$\E$-rings. 
\begin{itemize}
\item Let $f: A \to B$ be a morphism of $\E$-rings. 
Recall that $f: A \to B$ is said to be flat if $\pi_0 B$ is flat $\pi_0
      A$-module and the underlying map of commutative
rings $\pi_0 A \to \pi_0 B$ induces the isomorphism $(\pi_i A)
\otimes_{\pi_0 A} (\pi_0 B) \simeq \pi_i B$ for every
integer $i$. 
\item Let $f: A \to B$ be a morphism of $\E$-rings. $f: A \to B$ is said to be \'etale if it is flat and the underlying map of commutative
rings $\pi_0 A \to \pi_0 B$ is \'etale. The class of \'etale morphisms on $(\CAlg^{cn})^{op}$ satisfies the
 axiom of admissible morphisms by \cite[Proposition 2.4.17]{DAG5}. 
\item We say that a morphism $f: A \to B$ of $\E$-rings is faithfully flat if
 it is flat morphism and the underlying map of commutative
rings $\pi_0 A \to \pi_0 B$ is faithfully flat. 
By a class of faithfully flat morphisms, Grothendieck topology is
 defined on $\CAlg^{op}$, which is called flat topology~\cite[Definition 5.2, Proposition 5.4]{DAG7}.  
\end{itemize}

Recall the Grothendieck topology given by the admissibility on
covering sieves from \cite[Definition 1.2.1]{DAG5}. 
Let $\mathcal{G}^{Sp}_{Zar}(\mathbb{S})$ be an $\infty$-category with
Grothendieck topology given as \cite[Definition2.10]{DAG7}. 
We denote by $\CAlg^{Zar}$ an $\infty$-category
$\Ind(\mathcal{G}^{Sp}_{Zar}(\mathbb{S})^{op})$ equipped with Zariski topology~\cite[Notation 2.2.6]{DAG5}. 
Moreover, this Zariski geometry is finitary~\cite[Remark 2.2.8]{DAG5} by
the definition. (For the base change assumption, see \cite[Remark
1.2.4]{DAG5}). 

Let $A$ be a connective $\E$-ring. Let $\CAlg_{A}^{et}$ be an $\infty$-category spanned by connective \'etale algebras
over $A$. 
Let $\mathcal{C} \subset (\CAlg_A^{et})^{op}$ 
be a sieve containing $A$. 
We say that $\mathcal{C}$ is a Nisnevich covering sieve on $A$ if it
contains a collection of morphisms $A \to A_a$ such that their underlying
maps of commutative rings $\pi_0 A \to \pi_0 A_a$ determine a Nisnevich
covering defined in \cite[Definition 1.1]{DAG11}. 
Let $A$ be a connective $\E$-ring. We define the admissible morphisms in $(\CAlg^{cn})^{op}$ by the
      morphisms corresponding to \'etale morphisms in $\CAlg^{cn}$, and the collection of admissible morphisms $A \to A_a$ generates a
      covering sieve on $A$ if and only if it is a Nisnevich
covering sieve. Then, it generates a Grothendieck topology on
      $\CAlg^{cn}$. It is called the Nisnevich topology on $\CAlg^{cn}$. 
We denote by $\CAlg^{Nis}$ an $\infty$-category $\CAlg^{cn}$ endowed
 with the Nisnevich topology. 

Let $\CAlg^{\mathcal{G}}$ denote either $\CAlg^{Zar}$ or
$\CAlg^{Nis}$. 
Let us denote an object
in the essential image of Yoneda functor $\CAlg^{\mathcal{G}} \to
\Shv_{\mathcal{S}}(\CAlg^{\mathcal{G}})$ by $\Spec^{\mathcal{G}} R$, and we call the object an affine
spectral scheme.

The following proposition is a special case of \cite[Proposition 5.7]{DAG7}. 
\begin{prop}
\label{57}
Let us consider $\CAlg^{op}$ endowed with the flat topology.  
Then, in the case of
$\infty$-topos $\CAlg^{op}$ with flat topology, a functor $F$ is a sheaf if, it preserves
finite products and for any covering $X \to Y$ in $\CAlg$, 
\[
 F(Y) \to \mathrm{lim}_{\Delta}F(X_{\bullet})
\]
is an equivalence. Here, $X_{\bullet} \to Y$ is a simplicial object
associated to $X \to Y$, and the limit in the right hand side is taken
over its simplicial diagram. 
\end{prop}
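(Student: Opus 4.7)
The plan is to verify the sheaf condition for $F$ directly from the definition: for every covering sieve $\mathcal{C}^{(0)}$ on an object $Y$ of the site $\CAlg^{op}$ equipped with the flat topology, one must check that the canonical map $F(Y) \to \lim_{(Y' \to Y) \in \mathcal{C}^{(0)}} F(Y')$ is an equivalence. The strategy is to reduce descent along an arbitrary flat covering sieve to \v{C}ech descent along a single faithfully flat morphism, where hypothesis (b) applies. This is essentially the content of \cite[Proposition 5.7]{DAG7}, of which the present statement is a special case.

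First, I would invoke the description of the flat topology: any covering sieve on $Y$ contains a jointly faithfully flat family $\{Y \to Y_i\}_{i \in I}$ in $\CAlg$, and one may assume $I$ is finite by the finitary nature of the flat topology. Setting $X := \prod_{i \in I} Y_i$ (the product in $\CAlg$, corresponding to the coproduct $\coprod_i \Spec Y_i$ in $\CAlg^{op}$), the induced morphism $Y \to X$ is again faithfully flat, yielding a single-morphism refinement of the original cover. By \cite[Corollary 6.2.3.5]{HT} applied to the covering sieve generated by $\{Y \to X\}$, the limit of $F$ over this sieve is computed as the cosimplicial limit $\lim_\Delta F(X_\bullet)$, where $X_\bullet$ is the \v{C}ech nerve with $X_n = X \otimes_Y X \otimes_Y \cdots \otimes_Y X$. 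Hypothesis (b) then identifies $F(Y) \simeq \lim_\Delta F(X_\bullet)$.

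The main obstacle is to match the limit of $F$ over the original covering sieve generated by $\{Y \to Y_i\}$ with this cosimplicial limit. One unwinds
\[
X_n \simeq \prod_{(i_0, \ldots, i_n) \in I^{n+1}} Y_{i_0} \otimes_Y \cdots \otimes_Y Y_{i_n}
\]
by distributing the relative tensor product $\otimes_Y$ over finite products in $\CAlg$, an identification that rests on the correspondence between finite products in $\CAlg$ and finite coproducts of affine spectral schemes together with the compatibility of relative tensor products with such coproducts. Hypothesis (a) then yields $F(X_n) \simeq \prod_{(i_0, \ldots, i_n)} F(Y_{i_0} \otimes_Y \cdots \otimes_Y Y_{i_n})$, which is precisely the $n$-th level of the simplicial object computing the limit over the original sieve via another application of \cite[Corollary 6.2.3.5]{HT}. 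The delicate step is verifying the distributivity interplay between finite products and relative tensor products in the $\infty$-category $\CAlg$; once that is in place, the combination of hypotheses (a) and (b) supplies the sheaf property.
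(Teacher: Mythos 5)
The paper itself gives no argument for this proposition: it is simply quoted as a special case of \cite[Proposition 5.7]{DAG7}, so there is no internal proof to compare yours against. Your outline is, in substance, the standard proof of that cited result, and its main steps are the right ones: replace a covering sieve by a finite jointly faithfully flat family $\{Y \to Y_i\}_{i\in I}$ (finiteness is built into the definition of the flat topology), pass to the single faithfully flat morphism $Y \to X = \prod_i Y_i$, compute the descent condition via the \v{C}ech nerve of that morphism, and use product-preservation of $F$ to identify the resulting cosimplicial object levelwise with the \v{C}ech object of the original family. The step you flag as delicate is actually the easy one: for \emph{finite} $I$ the product $\prod_i Y_i$ in $\CAlg$ has underlying spectrum a finite product, hence a finite coproduct of the underlying $Y$-modules, and since $\otimes_Y$ preserves colimits in each variable one gets $X \otimes_Y X \simeq \prod_{i,j} Y_i \otimes_Y Y_j$ and likewise in higher simplicial degrees; this is where the restriction to finite families is genuinely used.

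The place where your sketch is thinner than the cited proof is the sieve bookkeeping. The sieve generated by $\{Y \to Y_i\}$ and the sieve generated by $Y \to X$ are different (the latter contains the former but not conversely), and the sheaf condition must be verified for \emph{every} covering sieve, not just for a chosen generating or monogenic one; moreover \cite[Corollary 6.2.3.5]{HT} as literally stated concerns the \v{C}ech nerve of a single morphism, so identifying the limit over an arbitrary covering sieve with a totalization of a \v{C}ech object requires the cofinality/refinement argument that constitutes the real content of \cite[Proposition 5.7]{DAG7}. If you intend your write-up as a proof rather than a reduction to Lurie's proposition, that reduction from arbitrary covering sieves to the generated ones is the step you would still need to supply; as a guide to why the criterion holds, your outline is accurate.
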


Note that a Zariski (resp. Nisnevich) covering sieve is a covering which contains a finite collection of morphisms $B \to B_a$ generating the covering sieve. 
\begin{lemma}\label{cover}
\begin{enumerate}[(i)]
\item A Zariski covering sieve is a Nisnevich covering sieve on $\CAlg^{cn}$. 
\item A Zariski covering sieve and a Nisnevich covering sieve are
      covering sieves on the flat topology on $\CAlg^{cn}$. 
\end{enumerate}
\end{lemma}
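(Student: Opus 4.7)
The plan is to unwind both statements to their classical-algebra content on $\pi_0$, since every covering notion in the excerpt is controlled by its effect on $\pi_0$.

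For part (i), I would start with a Zariski covering sieve on a connective $\mathbb{E}_\infty$-ring $R$. By the definition referenced from \cite{DAG7} this sieve is generated by a finite family of localization maps $R \to R[a_i^{-1}]$ where $a_1, \ldots, a_n \in \pi_0 R$ generate the unit ideal. I would then verify two things. First, each $R \to R[a_i^{-1}]$ is \'etale in the sense of the bullet list above: it is flat by the results from \cite[7.2.4]{HA} (localization preserves the flatness condition $(\pi_i R)\otimes_{\pi_0 R}(\pi_0 R)[a_i^{-1}] \simeq \pi_i(R[a_i^{-1}])$), and the induced map $\pi_0 R \to (\pi_0 R)[a_i^{-1}]$ is a classical localization, which is \'etale. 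Second, the collection of classical maps $\pi_0 R \to (\pi_0 R)[a_i^{-1}]$ is a classical Zariski covering of $\mathrm{Spec}(\pi_0 R)$, and hence a Nisnevich covering in the sense of \cite[Definition 1.1]{DAG11}. Combining the two, the sieve satisfies the definition of a Nisnevich covering sieve recalled just before the lemma.

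For part (ii), I would reduce both cases to checking faithful flatness of the induced map on $\pi_0$. For a Nisnevich covering sieve, choose a finite generating family $R \to R_a$ consisting of \'etale maps whose classical reductions form a Nisnevich covering of $\mathrm{Spec}(\pi_0 R)$. \'Etaleness already gives flatness, so it suffices to show that the total map $\pi_0 R \to \prod_a \pi_0 R_a$ is faithfully flat; this follows from the joint surjectivity of a classical Nisnevich covering on the underlying topological space of $\mathrm{Spec}(\pi_0 R)$, which together with flatness upgrades to faithful flatness. Thus the sieve is a covering for the flat topology in the sense of \cite[Definition 5.2]{DAG7}. The Zariski case is then immediate from part (i), but can also be checked directly from the unit-ideal condition: $\pi_0 R \to \prod_i (\pi_0 R)[a_i^{-1}]$ is faithfully flat exactly because the $a_i$ generate the unit ideal.

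The steps are essentially bookkeeping; the only substantive point is that one must be careful about the definition of ``covering sieve'' in each of the three topologies and check that a covering sieve is determined by a finite generating family of admissible morphisms, as noted in the paragraph preceding the lemma. Once this reduction to a finite generating family is in hand, everything is classical commutative algebra on $\pi_0$. I do not expect any real obstacle; the only place requiring slight care is the flatness of a localization $R \to R[a^{-1}]$ at the spectral level, which is where I would cite \cite[7.2.4]{HA} explicitly rather than reproving it.
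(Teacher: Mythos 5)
Your argument is correct and follows essentially the same route as the paper: for (i) you check flatness of each spectral localization $R \to R[a_i^{-1}]$ and observe that the induced classical Zariski covering on $\pi_0 R$ is a Nisnevich covering, and for (ii) you reduce to a finite generating family of \'etale maps and verify that $\pi_0 R \to \prod_a \pi_0 R_a$ is faithfully flat (the paper cites a remark of Lurie for this last point where you argue it directly via joint surjectivity, but the content is the same).
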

\begin{proof}
Let $R$ be a connective $\E$-ring and $f_1, \cdots, f_n$ elements in
 $\pi_0 R$ which generate the unit ideal of $R$. To prove (i), we show
 that each $R \to R[f_i^{-1}]$ is flat and $\{\pi_0 R \to \pi_0 R[f_i^{-1}]\}$ is an ordinary Nisnevich covering of $\pi_0 R$. 
Since each $R \to R[f_i^{-1}]$ is flat by~\cite[Remark 2.9]{DAG7} and
 $\{\pi_0 R \to \pi_0 R[f_i^{-1}]\}$ is an ordinary Zariski covering of
 $\pi_0 R$, so that it is a Nisnevich covering of $\pi_0 R$~\cite[Remark
 1.13]{DAG11}. 

To show (ii), we show that any collection of morphisms $R \to R_a$ such that their underlying
maps of commutative rings $\pi_0 R \to \pi_0 R_a$ determine a Nisnevich
covering, $R \to \prod_{\alpha} R_{\alpha}$ is faithfully flat. 
Note that we can assume that this
 Nisnevich covering is a finite collection of morphisms~\cite[Remark
 1.6]{DAG11}. 

Since each $R \to R_{\alpha}$ is \'etale, it is flat. It follows that $R \to \prod_{\alpha} R_{\alpha}$ is also flat. By \cite[Remark
 1.12]{DAG11},  $\pi_0 R \to \pi_0 (\prod_{\alpha} R_{\alpha}) \cong
 \prod_{\alpha} \pi_0 R_{\alpha}$ is faithfully flat. 
\end{proof}

\section{Group completion in an $\infty$-topos}
\begin{defn}[cf. \cite{HA} Definition 2.4.2.1, Definition 5.2.6.2]\label{mono1}
Let $\mathcal{C}$ be an $\infty$-category with finite products, and
 $\mathcal{O}^{\otimes}$ 
 be the 
$\infty$-operad $\mathcal{A}ss^{\otimes}$ (resp. $\N_{\Delta}(\Fin)$)
 given in \cite[Definition 4.1.1.3]{HA} (resp. given by taking the
 simplicial nerve of \cite[Notation 2.0.0.2]{HA}). 
\begin{enumerate}[(i)]
\item An $\infty$-monoid (resp. commutative $\infty$-monoid) in $\mathcal{C}$ is
 a functor $M : \mathcal{O}^{\otimes} \to \mathcal{C}$ such that the
 morphism $M(\langle n \rangle) \to M(\langle 1 \rangle)$ induced by the inert maps $\sigma^i : \langle n \rangle
 \to \langle 1 \rangle$ induces an equivalence $M(\langle n \rangle) \simeq M(\langle 1 \rangle)^n$. 
\item An $\infty$-monoid is an $\infty$-group if its image in $h\mathcal{C}$ is
 a group object. 
\end{enumerate}
\end{defn}
\begin{rem}
In terminology of \cite{GGN}, a commutative $\infty$-monoid is called an
$\E$-monoid, and a commutative $\infty$-group is called an $\E$-group by \cite[Proposition 1.1]{GGN}. 
\end{rem}
Recall that the concepts of $\infty$-monoid and $\infty$-group are defined in Definition~\ref{mono1}. In this section, we treat the several adjoint functors on
$\infty$-categories, and  
we characterize the group completion functor.

\subsection{The group completion functor}
\begin{defn}\label{gba}
Let $\mathcal{C}$ be an $\infty$-category with finite products as in Definition~\ref{mono1}. 
\begin{enumerate}[(i)]
\item In this section, according to the notation in \cite[Definition 1.2]{GGN}, we denote by $Mon(\mathcal{C})$ an $\infty$-category of commutative
$\infty$-monoids in $\mathcal{C}$ and by
$Gp(\mathcal{C})$ an $\infty$-category of commutative $\infty$-groups
in $\mathcal{C}$~\cite[Proposition 1.1]{GGN}. 
\item We say that a functor $Mon(\mathcal{C}) \to Gp(\mathcal{C})$ is a
      group completion on $\mathcal{C}$ if it is the left adjoint to the
      inclusion functor $Gp(\mathcal{C}) \to Mon(\mathcal{C})$. 
\item Let $\mathcal{S}$ be the $\infty$-category of spaces. As a special case, if we take a symmetric monoidal
      $\infty$-category $\mathcal{S}$, we also use the notations
      $nMon(\mathcal{S})$ and $nGp(\mathcal{S})$ $\infty$-categories of
      not-necessary commutative
$\infty$-monoids and not-neccesary commutative $\infty$-groups
in $\mathcal{C}$ respectively. We also say that a functor $nMon(\mathcal{S}) \to nGp(\mathcal{S})$ is a
      group completion on $\mathcal{S}$ if it is the left adjoint to the
      inclusion functor $nGp(\mathcal{S}) \to nMon(\mathcal{S})$.
\end{enumerate}
By the universal
property of adjoint functors, the group completion functor is uniquely determined up to
equivalence~\cite[Remark 5.2.2.2]{HT}.  
\end{defn}

\begin{defn}[cf. \cite{HA} Lemma 2.4.5.9]\label{2459}
\begin{enumerate}[(i)]
\item We denote by $\Fun^{\prod}(\mathcal{C}, \, \mathcal{D})$ an $\infty$-category of functors which preserve the finite products.
\item We define the product on $\Fun^{\prod}(\mathcal{C}, \, \mathcal{D})$
      by taking the objectwise product in $\mathcal{S}$, i.e., the product induced from the formation of product in $\mathcal{S}$ under the sheafification. 
\end{enumerate}
\end{defn}

\begin{lemma}[\cite{GGN} Lemma 1.6]\label{ggn}
Let $\mathcal{C}$ and $\mathcal{D}$ are $\infty$-categories with finite products. Let $\Fun^{\prod}(\mathcal{C}, \, \mathcal{D})$ be an $\infty$-category of functors which preserve the finite products. Then, we have
\[
 Mon(\Fun^{\prod}(\mathcal{C}, \, \mathcal{D})) \simeq \Fun^{\prod}(\mathcal{C}, \, Mon(\mathcal{D})),
\]
and
\[
 Gp(\Fun^{\prod}(\mathcal{C}, \, \mathcal{D})) \simeq \Fun^{\prod}(\mathcal{C}, \, Gp(\mathcal{D})). 
\]
\end{lemma}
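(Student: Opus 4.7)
Unwinding Definition~\ref{mono1}, a commutative $\infty$-monoid in $\Fun^{\prod}(\mathcal{C}, \mathcal{D})$ is a functor $F: \N_{\Delta}(\Fin) \to \Fun^{\prod}(\mathcal{C}, \mathcal{D})$ such that the inert maps induce equivalences $F(\langle n\rangle) \simeq F(\langle 1\rangle)^{n}$, the product being computed in $\Fun^{\prod}(\mathcal{C}, \mathcal{D})$. By the exponential law for $\infty$-categories, such an $F$ transposes to a bifunctor
\[
 H : \N_{\Delta}(\Fin) \times \mathcal{C} \to \mathcal{D}.
\]
My plan is to identify both sides of the claimed equivalence with the same full $\infty$-subcategory $\mathcal{E} \subset \Fun(\N_{\Delta}(\Fin) \times \mathcal{C},\, \mathcal{D})$, namely the one spanned by those $H$ satisfying: (I) for every $c \in \mathcal{C}$, the functor $H(-, c)$ satisfies the Segal condition $H(\langle n\rangle, c) \simeq H(\langle 1\rangle, c)^{n}$; and (II) for every $\langle n\rangle$, the functor $H(\langle n\rangle, -)$ preserves finite products.

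First I would verify $Mon(\Fun^{\prod}(\mathcal{C}, \mathcal{D})) \simeq \mathcal{E}$. Condition (II) exactly encodes that each $F(\langle n\rangle)$ lies in $\Fun^{\prod}(\mathcal{C}, \mathcal{D})$. Because products in $\Fun^{\prod}(\mathcal{C}, \mathcal{D})$ are computed objectwise (Definition~\ref{2459}(ii)), the Segal condition for $F$ in $\Fun^{\prod}(\mathcal{C}, \mathcal{D})$ is equivalent to its pointwise version, which is (I). Next I would verify $\Fun^{\prod}(\mathcal{C}, Mon(\mathcal{D})) \simeq \mathcal{E}$. A functor $G : \mathcal{C} \to \Fun(\N_{\Delta}(\Fin), \mathcal{D})$ lands in $Mon(\mathcal{D})$ precisely when each $G(c)$ satisfies the Segal condition, which is again (I). Products in $Mon(\mathcal{D})$ are created by the forgetful functor to $\Fun(\N_{\Delta}(\Fin), \mathcal{D})$, so $G$ preserves finite products iff each $G(-)(\langle n\rangle)$ does; using (I) and the fact that finite products in $\mathcal{D}$ distribute over themselves, this is equivalent to $G(-)(\langle 1\rangle)$ preserving products, and this is (II). The equivalence of $\infty$-categories then follows since both sides are realised as the same full subcategory of $\Fun(\N_{\Delta}(\Fin) \times \mathcal{C},\, \mathcal{D})$.

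For the second equivalence, I would observe that $Gp(\mathcal{E}') \subset Mon(\mathcal{E}')$ is the full subcategory determined by a condition on the homotopy category $h\mathcal{E}'$. Under the identification above, an object $F \in Mon(\Fun^{\prod}(\mathcal{C}, \mathcal{D}))$ is a group object iff its image in $h \Fun^{\prod}(\mathcal{C}, \mathcal{D})$ is a group object; since evaluation at each $c \in \mathcal{C}$ is jointly conservative on homotopy categories, this is equivalent to $\widetilde{F}(c) \in Gp(\mathcal{D})$ for every $c$, i.e.\ $\widetilde{F}$ factoring through $Gp(\mathcal{D}) \subset Mon(\mathcal{D})$. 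Hence the equivalence of the first part restricts to $Gp(\Fun^{\prod}(\mathcal{C}, \mathcal{D})) \simeq \Fun^{\prod}(\mathcal{C}, Gp(\mathcal{D}))$.

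The main obstacle I expect is the bookkeeping around condition (II): verifying that product-preservation is forced at every $\langle n\rangle$ once it is known at $\langle 1 \rangle$ under the Segal condition, and checking that ``objectwise products in $\Fun^{\prod}$'' really do make the Segal condition there equivalent to its pointwise analogue. Apart from these routine but careful checks, the proof is formal; everything else is simply the exponential law together with an identification of two full subcategories.
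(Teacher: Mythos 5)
Your proposal is correct; the paper itself gives no argument for this lemma, simply quoting it as Lemma~1.6 of the cited Gepner--Groth--Nikolaus paper, and your proof is essentially the standard one given there: since limits in functor categories are computed objectwise, the Segal condition (and, via the shear-map/pointwise detection of equivalences, the grouplike condition) can be checked pointwise, so both sides identify with the same full subcategory of $\Fun(\N_{\Delta}(\Fin)\times\mathcal{C},\,\mathcal{D})$. The only cosmetic remark is that your reduction of product-preservation from all $\langle n\rangle$ to $\langle 1\rangle$ is unnecessary for the identification (condition (II) as you stated it already demands it at every $\langle n\rangle$), but it does no harm.
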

\qed 

\begin{defn}[cf. \cite{HT} 6.1.2.7]\label{B}
Let $\mathcal{C}$ be an $\infty$-topos. For an $\infty$-monoid object $G$, 
there exists a
colimit of the simplicial homotopy diagram by ~\cite[Lemma 5.2.2.6]{HA} 
\[
\xymatrix@1{
\cdots \ar@<-1ex>[r] \ar@<-0.7ex>[r] \ar@<-0.4ex>[r] \ar@<-0.1ex>[r] \ar@<0.2ex>[r] & G \times G   \times G 
\ar@<-0.7ex>[r] \ar@<-0.4ex>[r] \ar@<-0.1ex>[r] \ar@<0.2ex>[r] &
     G \times G 
\ar@<-0.5ex>[r] \ar@<-0.2ex>[r] \ar@<0.1ex>[r]  & G \ar@<-0.3ex>[r] \ar@<0.3ex>[r] & 1 .
} 
\]
where the face map is given by the multiplication on $G$ and the
 degeneracy map is given by the unit morphism. 
We define $BG$ is a colimit of the simplicial homotopy diagram. 
\end{defn}
\begin{rem}\label{mono2}
Let $\mathcal{S}$ be the symmetric cartesian monoidal
$\infty$-category of spaces. In terminology of topology, an object $M$ in $\mathcal{S}$ is grouplike if $\pi_0 M$ is a group object in
$h\mathcal{S}$~\cite[Example 5.2.6.4]{HA}. A grouplike object in
 $\mathcal{S}$ is just an $\infty$-group in $\mathcal{S}$ by Definition~\ref{mono1}. 

If $\mathcal{C}$ is a symmetric cartesian monoidal model category, an
 $\infty$-monoid object can be regarded as an associative algebra object
 by~\cite[Proposition 2.4.2.5]{HA}. 
\end{rem}
\subsection{Characterization of the group completion on $\mathcal{S}$}
Let $\mathcal{S}$ be the $\infty$-category of spaces, and
 $\mathcal{S}_{\ast}$ the pointed $\infty$-category of
 spaces (cf. \cite[Notation 5.2.6.11]{HA}). 

For a space $X$, we define the loop space of $X$ by $\Map(S^1, \, X)$, where $S^1$ is a simplicial circle $\Delta^1 / \Delta^0$. 

Let $\mathcal{S}_{\ast \ge 1}$ denote the full $\infty$-subcategory of
      connected spaces in $\mathcal{S}_{\ast}$. There is an equivalence from the $\infty$-category of connected
      spaces to the $\infty$-category of not-neccesary commutative
      $\infty$-groups in $\mathcal{S}$ obtained by the adjoint functors
      $B : nMon(\mathcal{S}) \rightleftarrows \mathcal{S}_{\ast} :
      \Omega$. There is an equivalence from the $\infty$-category $\spasce^{cn}$
      to $Gp(\mathcal{S})$ obtained by $B$ and $\Omega$, which is shown in \cite[Remark 5.2.6.26]{HA}.


\begin{lemma}\label{yy}
Let $\mathcal{S}$ be an $\infty$-category of spaces. 
Let $i: nGp(\mathcal{S}) \to nMon(\mathcal{S})$ be the forgetfull
 functor. 
With the previous
 notations, we have the adjunction
\[
 \Omega B : nMon(\mathcal{S}) \rightleftarrows nGp(\mathcal{S}) : i,
\] 
where $i$ is the forgetfull functor. In other words, $\Omega B$ is the
 group completion on $\mathcal{S}$. 
\end{lemma}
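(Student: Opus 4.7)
The plan is to derive the adjunction $\Omega B \dashv i$ from the adjunction $B \dashv \Omega$ between $nMon(\mathcal{S})$ and $\mathcal{S}_{\ast}$, together with the restricted equivalence $B : nGp(\mathcal{S}) \simeq \mathcal{S}_{\ast \ge 1} : \Omega$ recalled immediately before the statement. First I would check that $\Omega B$ factors through $nGp(\mathcal{S})$: the loop space of any pointed space is grouplike, so $\Omega X \in nGp(\mathcal{S})$ for every $X \in \mathcal{S}_{\ast}$, and in particular $\Omega BM$ lies in $nGp(\mathcal{S}) \subset nMon(\mathcal{S})$. I would also note that $BM \in \mathcal{S}_{\ast \ge 1}$ for every $M$, because the colimit in Definition~\ref{B} is the realization of a simplicial diagram whose $0$-simplex is the terminal object $1$, and $\pi_{0}$ of such a realization is then a point.

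Given these observations, for $M \in nMon(\mathcal{S})$ and $G \in nGp(\mathcal{S})$ I would unwind $\Map_{nGp(\mathcal{S})}(\Omega BM, G)$ through the chain
\[
 \Map_{nGp(\mathcal{S})}(\Omega BM, G) \simeq \Map_{\mathcal{S}_{\ast \ge 1}}(B\Omega BM, BG) \simeq \Map_{\mathcal{S}_{\ast}}(BM, BG),
\]
where the first equivalence uses that $B$ is fully faithful on $nGp(\mathcal{S})$ (being an equivalence onto $\mathcal{S}_{\ast \ge 1}$) and the second uses $B\Omega BM \simeq BM$, namely the counit of the equivalence evaluated at the connected space $BM$. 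Applying the adjunction $B \dashv \Omega$ and then the equivalence $\Omega BG \simeq G$, valid because $G$ is grouplike, yields
\[
 \Map_{\mathcal{S}_{\ast}}(BM, BG) \simeq \Map_{nMon(\mathcal{S})}(M, \Omega BG) \simeq \Map_{nMon(\mathcal{S})}(M, iG).
\]
Composing the two displays produces a natural equivalence $\Map_{nGp(\mathcal{S})}(\Omega BM, G) \simeq \Map_{nMon(\mathcal{S})}(M, iG)$, which by \cite[Proposition 5.2.2.8]{HT} exhibits $\Omega B$ as left adjoint to $i$ and therefore identifies it with the group completion functor on $\mathcal{S}$ in the sense of Definition~\ref{gba}(iii).

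The substantive content is already packaged into the two cited facts: the adjunction $B \dashv \Omega$ at the level of $nMon(\mathcal{S}) \rightleftarrows \mathcal{S}_{\ast}$, and the mutual inverse equivalences between $nGp(\mathcal{S})$ and $\mathcal{S}_{\ast \ge 1}$ given by $B$ and $\Omega$. The main delicate point is to confirm that the single symbol ``$\Omega$'' in both places refers to the same functor, so that the counit at $BM$ of the restricted equivalence genuinely agrees with the component of the $B \dashv \Omega$ counit after factorization through $\mathcal{S}_{\ast \ge 1}$, and to track naturality of each of the four displayed equivalences in both $M$ and $G$. Once these compatibilities are in place, the argument is a formal diagram chase.
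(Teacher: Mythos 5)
Your proof is correct and takes essentially the route the paper intends: the paper gives Lemma~\ref{yy} no written proof, treating it as immediate from exactly the two facts recalled just before the statement (the adjunction $B \dashv \Omega$ between $nMon(\mathcal{S})$ and $\mathcal{S}_{\ast}$, and the equivalence $nGp(\mathcal{S}) \simeq \mathcal{S}_{\ast \ge 1}$ via $B$ and $\Omega$ from \cite[Remark 5.2.6.26]{HA}), and your chain of mapping-space equivalences merely spells out that formal deduction, including the compatibility of the two occurrences of $\Omega$ which the paper likewise leaves implicit.
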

\qed

%
%
%
%
%

\subsection{Characterization of the group completion on $\Fun^{\prod}((\CAlg^{\mathcal{G}})^{op}, \widehat{\mathcal{S}})$}
In this subsection, we
characterize the group completion functor on
$\Fun^{\prod}((\CAlg^{\mathcal{G}})^{op}, \widehat{\mathcal{S}})$ by using
$\Omega$ and $B$. 
\begin{defn}
\begin{enumerate}[(i)]
\item In this paper, we will denote by $\Omega B$ the left adjoint functor given by $\Omega B : nMon(\mathcal{S}) \rightleftarrows nGp(\mathcal{S})$ in Lemma~\ref{yy}. 
\item We also denote by $\Omega B$ the functor
      $\Fun((\CAlg^{\mathcal{G}})^{op}, Mon(\widehat{\mathcal{S}})) \to
      \Fun((\CAlg^{\mathcal{G}})^{op}, \widehat{\mathcal{S}})$ which
      sends a presheaf $F$ to a presheaf given by $R \mapsto \Omega B
      (F(R))$.  
\end{enumerate}
\end{defn}

We will give an explicit description of the group completion functor for the $\infty$-category of
presheaves which preserves finite products. 

\begin{defn}
We define a functor 
\[
 \Omega B^{\prod} :
 Mon(\Fun^{\prod}((\CAlg^{\mathcal{G}})^{op}, \widehat{\mathcal{S}}))
 \to Gp(\Fun^{\prod}((\CAlg^{\mathcal{G}})^{op},
 \widehat{\mathcal{S}})) 
\]
by the restriction of $\Omega B$, i.e., 
$\Omega B^{\prod}$ is the functor which satisfies the following commutative diagram
\[ 
 \xymatrix@1{
Mon(\Fun((\CAlg^{\mathcal{G}})^{op}, \widehat{\mathcal{S}})) \ar[r]^{\Omega B} &
 Gp(\Fun((\CAlg^{\mathcal{G}})^{op}, \widehat{\mathcal{S}}))  \\
Mon(\Fun^{\prod}((\CAlg^{\mathcal{G}})^{op}, \widehat{\mathcal{S}})) \ar[u]
 \ar[r]^{\Omega B^{\prod}} & \ar[u]
 Gp(\Fun^{\prod}((\CAlg^{\mathcal{G}})^{op}, \widehat{\mathcal{S}}))},  
\]
where the vertical morphisms are inclusions. 
\end{defn}

\begin{lemma}
The functor $\Omega B^{\prod}$ gives the group completion on
      $\Fun^{\prod}((\CAlg^{\mathcal{G}})^{op},
      \widehat{\mathcal{S}})$ up to equivalences. 
\end{lemma}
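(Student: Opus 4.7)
The plan is to exhibit $\Omega B^{\prod}$ as a left adjoint to the inclusion $Gp(\Fun^{\prod}((\CAlg^{\mathcal{G}})^{op}, \widehat{\mathcal{S}})) \hookrightarrow Mon(\Fun^{\prod}((\CAlg^{\mathcal{G}})^{op}, \widehat{\mathcal{S}}))$; uniqueness up to equivalence then follows from the universal property in Definition~\ref{gba}. My argument will transport the known adjunction on $\widehat{\mathcal{S}}$ up through the functor category, and then restrict it to product-preserving functors using Lemma~\ref{ggn}.

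First I would invoke the commutative analogue of Lemma~\ref{yy}, namely that $\Omega B : Mon(\widehat{\mathcal{S}}) \rightleftarrows Gp(\widehat{\mathcal{S}}) : i$ is an adjunction with $i$ the fully faithful inclusion (this is formally the same argument, and the commutative version is compatible with the reference to \cite{GGN}). Postcomposition with this adjunction yields an adjunction on ordinary functor $\infty$-categories
\[
(\Omega B)_{*} : \Fun((\CAlg^{\mathcal{G}})^{op},\, Mon(\widehat{\mathcal{S}})) \rightleftarrows \Fun((\CAlg^{\mathcal{G}})^{op},\, Gp(\widehat{\mathcal{S}})) : i_{*},
\]
whose left adjoint is computed objectwise and therefore agrees, by unwinding the definition, with the functor $\Omega B$ of the preceding definition.

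Next I would restrict to product-preserving functors. The right adjoint $i$ is fully faithful and preserves all limits, so $i_{*}$ carries $\Fun^{\prod}$ into $\Fun^{\prod}$. For the left adjoint I need to check that $\Omega B$ preserves finite products on $Mon(\widehat{\mathcal{S}})$: since $Gp(\widehat{\mathcal{S}})$ is a reflective subcategory of $Mon(\widehat{\mathcal{S}})$ closed under finite products (a product of grouplike $\infty$-monoids is grouplike on $\pi_{0}$), the universal property of the unit $M \to \Omega B M$ forces $\Omega B(M_{1} \times M_{2}) \simeq \Omega B(M_{1}) \times \Omega B(M_{2})$. Hence $(\Omega B)_{*}$ restricts to a functor $\Fun^{\prod}(\ldots, Mon(\widehat{\mathcal{S}})) \to \Fun^{\prod}(\ldots, Gp(\widehat{\mathcal{S}}))$ that remains left adjoint to the restricted $i_{*}$.

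Finally I would apply Lemma~\ref{ggn} with $\mathcal{C} = (\CAlg^{\mathcal{G}})^{op}$ and $\mathcal{D} = \widehat{\mathcal{S}}$ to identify
\[
Mon(\Fun^{\prod}((\CAlg^{\mathcal{G}})^{op},\, \widehat{\mathcal{S}})) \simeq \Fun^{\prod}((\CAlg^{\mathcal{G}})^{op},\, Mon(\widehat{\mathcal{S}})),
\]
and analogously for $Gp$. Under these equivalences the restricted adjunction from the previous step becomes an adjunction between $Mon(\Fun^{\prod}(\ldots))$ and $Gp(\Fun^{\prod}(\ldots))$ whose left adjoint is precisely the composite making the square in the definition of $\Omega B^{\prod}$ commute, so the constructed functor is identified with $\Omega B^{\prod}$. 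The main obstacle I expect is the product-preservation of $\Omega B$ on $Mon(\widehat{\mathcal{S}})$; everything else is a formal transport along Lemmas~\ref{yy} and~\ref{ggn}, while this step genuinely uses the commutativity hypothesis and the reflectivity of $Gp$ inside $Mon$.
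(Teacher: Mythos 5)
Your overall route is the same as the paper's: identify $Mon(\Fun^{\prod}((\CAlg^{\mathcal{G}})^{op},\widehat{\mathcal{S}}))$ and $Gp(\Fun^{\prod}((\CAlg^{\mathcal{G}})^{op},\widehat{\mathcal{S}}))$ with product-preserving functors into $Mon(\widehat{\mathcal{S}})$ and $Gp(\widehat{\mathcal{S}})$ via Lemma~\ref{ggn}, and reduce to the objectwise group completion $\Omega B$ on spaces from Lemma~\ref{yy}. You are in fact more explicit than the paper, which merely asserts that the group completion is ``determined by the values in $\mathcal{S}$'' and does not discuss why the objectwise left adjoint restricts to the product-preserving subcategories; your invoking the commutative analogue of Lemma~\ref{yy} is also the right reading, since the paper cites only the noncommutative statement.

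The one genuine gap is in the step you yourself flag as the crux. The fact that $Gp(\widehat{\mathcal{S}})$ is reflective in $Mon(\widehat{\mathcal{S}})$ and closed under finite products only gives, from the universal property of the unit, a canonical comparison map $\Omega B(M_1\times M_2)\to \Omega B(M_1)\times\Omega B(M_2)$; it does not force this map to be an equivalence. Reflectors onto product-closed full subcategories need not preserve finite products in general --- that is an additional compatibility of the localization with products (an exponential-ideal type condition), not a formal consequence of closure under products. The fact you need is nonetheless true and has a short direct proof you should substitute: $B$ is the geometric realization of the simplicial object $B_{\bullet}M$, with $B_k(M_1\times M_2)\simeq B_kM_1\times B_kM_2$ levelwise, and since $\Delta^{op}$ is sifted and finite products in $\widehat{\mathcal{S}}$ commute with sifted colimits, $B$ preserves finite products; $\Omega$ preserves them as a limit, so $\Omega B$ does (alternatively, cite the corresponding statement in \cite{GGN}). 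With that repair your argument is complete and is essentially a fleshed-out version of the paper's proof.
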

\begin{proof}
Let $\Fun^{\prod}((\CAlg^{\mathcal{G}})^{op}, \widehat{\mathcal{S}}) $ be an $\infty$-category of presheaves on $\CAlg^{\mathcal{G}}$ which preserve the finite products. 
This $\infty$-category has the pointwise finite products. By
 Lemma~\ref{ggn}, we have the equivalences $Mon(\Fun^{\prod}((\CAlg^{\mathcal{G}})^{op}, \widehat{\mathcal{S}})) \simeq  \Fun^{\prod}((\CAlg^{\mathcal{G}})^{op}, Mon(\widehat{\mathcal{S}})) $ and $Gp(\Fun^{\prod}((\CAlg^{\mathcal{G}})^{op}, \widehat{\mathcal{S}})) \simeq  \Fun^{\prod}((\CAlg^{\mathcal{G}})^{op}, Gp(\widehat{\mathcal{S}}))$. 

Therefore, the group completion functor on
$\Fun^{\prod}((\CAlg^{\mathcal{G}})^{op}, \widehat{\mathcal{S}})$ is determined
by the values in the $\infty$-category $\mathcal{S}$ of spaces. The
 problem is reduced to the group completion functor $nMon(\mathcal{S})
 \to nGp(\mathcal{S})$ for the $\infty$-category $\mathcal{S}$ of spaces,
 which is $\Omega B$ by Lemma~\ref{yy}. 
\end{proof}

\subsection{Characterization of the group completion on $\Shv_{\widehat{\mathcal{S}}}(\CAlg^{\mathcal{G}})$}

\begin{defn}[cf. \cite{DAG7} Proposition 1.15]
We define the product on
 $\Shv_{\widehat{\mathcal{S}}}(\CAlg^{\mathcal{G}})$ by the pointwise
 product, i.e., the product induced from the formation of product in
 $\mathcal{S}$ under the sheafification. 
\end{defn}
Since a sheaf preserves finite limits, the pointwise product on
 $\Shv_{\widehat{\mathcal{S}}}(\CAlg^{\mathcal{G}})$ is the restriction
 of the product on $\Fun^{\prod}((\CAlg^{\mathcal{G}})^{op},
 \widehat{\mathcal{S}})$ in Definition~\ref{2459}. 
\begin{lemma}\label{sg}
We have $\Shv_{Gp(\widehat{\mathcal{S}})}(\CAlg^{\mathcal{G}}) \subset Gp(\Shv_{\widehat{\mathcal{S}}}(\CAlg^{\mathcal{G}}))$.
\end{lemma}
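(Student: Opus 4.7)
The plan is to view both $\infty$-categories as full subcategories of a common ambient category of presheaves, and then transfer the sheaf condition and the group-object condition between them using Lemma~\ref{ggn}.

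First, I would observe that any $F \in \Shv_{Gp(\widehat{\mathcal{S}})}(\CAlg^{\mathcal{G}})$, being a sheaf valued in a limit-admitting $\infty$-category, preserves finite products; hence $F$ lies in $\Fun^{\prod}((\CAlg^{\mathcal{G}})^{op}, Gp(\widehat{\mathcal{S}}))$. By Lemma~\ref{ggn} applied with $\mathcal{C}=(\CAlg^{\mathcal{G}})^{op}$ and $\mathcal{D}=\widehat{\mathcal{S}}$, there is a canonical equivalence
\[
\Fun^{\prod}((\CAlg^{\mathcal{G}})^{op}, Gp(\widehat{\mathcal{S}})) \simeq Gp(\Fun^{\prod}((\CAlg^{\mathcal{G}})^{op}, \widehat{\mathcal{S}})),
\]
so $F$ corresponds to a group object $\widetilde{F}$ in the presheaf category. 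Unwinding the operadic definition, $\widetilde{F}$ is the functor $\mathcal{A}ss^{\otimes} \to \Fun^{\prod}((\CAlg^{\mathcal{G}})^{op}, \widehat{\mathcal{S}})$ whose value at $\langle n \rangle$ is the presheaf $R \mapsto F(R)(\langle n \rangle)$.

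The key step is to verify that each presheaf $\widetilde{F}(\langle n \rangle)$ is itself a sheaf, which upgrades $\widetilde{F}$ to an object of $Gp(\Shv_{\widehat{\mathcal{S}}}(\CAlg^{\mathcal{G}}))$. This reduces to the assertion that limits in $Gp(\widehat{\mathcal{S}})$ are computed pointwise on $\mathcal{A}ss^{\otimes}$, so that the evaluation functor $\mathrm{ev}_{\langle n \rangle}: Gp(\widehat{\mathcal{S}}) \to \widehat{\mathcal{S}}$ preserves all limits. Given this, the sheaf condition $F(Y) \simeq \mathrm{lim}_{\Delta} F(X_{\bullet})$ in $Gp(\widehat{\mathcal{S}})$ (for any covering $X \to Y$) translates to the corresponding equivalence $\widetilde{F}(\langle n \rangle)(Y) \simeq \mathrm{lim}_{\Delta} \widetilde{F}(\langle n \rangle)(X_{\bullet})$ in $\widehat{\mathcal{S}}$ for every $n$. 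The Segal identifications $\widetilde{F}(\langle n \rangle) \simeq \widetilde{F}(\langle 1 \rangle)^n$ survive in $\Shv_{\widehat{\mathcal{S}}}(\CAlg^{\mathcal{G}})$ because products there are computed pointwise, and the group-like condition passes likewise to the homotopy category.

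Finally, the assignment $F \mapsto \widetilde{F}$ is fully faithful: on both sides, mapping spaces are computed pointwise in $R \in \CAlg^{\mathcal{G}}$ and in $\langle n \rangle \in \mathcal{A}ss^{\otimes}$, and they match term-by-term via Lemma~\ref{ggn}. This exhibits $\Shv_{Gp(\widehat{\mathcal{S}})}(\CAlg^{\mathcal{G}})$ as a full $\infty$-subcategory of $Gp(\Shv_{\widehat{\mathcal{S}}}(\CAlg^{\mathcal{G}}))$. The main obstacle is the standard but essential foundational point that $Gp(\widehat{\mathcal{S}})$ is closed under limits in $\Fun(\mathcal{A}ss^{\otimes}, \widehat{\mathcal{S}})$, so that limits in $Gp(\widehat{\mathcal{S}})$ are detected by the underlying-space functor; once this is cited from the standard $\infty$-operadic toolkit, the remaining argument is a diagram chase through Lemma~\ref{ggn}.
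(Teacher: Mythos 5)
Your argument is correct in substance, and its engine is the same compatibility the paper relies on: finite products (and, in your version, all the relevant limits) of sheaves are computed objectwise, so a $Gp(\widehat{\mathcal{S}})$-valued sheaf gives a group object in $\Shv_{\widehat{\mathcal{S}}}(\CAlg^{\mathcal{G}})$. Where you differ is in the supporting mechanism: the paper defines the product on $\Shv_{\widehat{\mathcal{S}}}(\CAlg^{\mathcal{G}})$ by sheafifying the pointwise product and then quotes left exactness of sheafification, whereas you use that the inclusion of sheaves into presheaves preserves limits, together with the closure of $Gp(\widehat{\mathcal{S}})$ under limits in the functor category (so evaluation at $\langle n\rangle$ preserves limits and the descent condition can be checked componentwise). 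Your route is more detailed and actually settles the points the paper leaves implicit -- that each component presheaf is a sheaf, that the Segal identifications and grouplikeness survive in $\Shv_{\widehat{\mathcal{S}}}(\CAlg^{\mathcal{G}})$ -- so it buys a complete argument at the cost of a longer diagram chase; the closure of $Gp(\widehat{\mathcal{S}})$ under limits is standard (e.g.\ via the shear-map characterization of grouplikeness) and fine to cite. One caveat: your opening justification that $F$ preserves finite products ``being a sheaf valued in a limit-admitting $\infty$-category'' is not right as stated -- for an arbitrary Grothendieck topology a sheaf need not carry finite products of rings to products of spaces. It does hold here because for the Zariski (hence Nisnevich) topology the projections $A\times B\to A$ and $A\times B\to B$ are the localizations at the complementary idempotents and generate a covering sieve, so descent yields $F(A\times B)\simeq F(A)\times F(B)$; alternatively, you can sidestep the issue by running the same argument with $\Fun((\CAlg^{\mathcal{G}})^{op},-)$ in place of $\Fun^{\prod}$, since the identification of group objects in a functor category with functors into group objects only uses that limits of functors are computed pointwise.
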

\begin{proof}
By definition, the objectwise products of presheaves becomes pointwise products of sheaves after sheafification. 
Since the sheafification is left exact~\cite[Definition 5.3.2.1]{HT}, it commutes with the finite products. Therefore, we have
 $\Shv_{Gp(\widehat{\mathcal{S}})}(\CAlg^{\mathcal{G}}) \subset
 Gp(\Shv_{\widehat{\mathcal{S}}}(\CAlg^{\mathcal{G}}))$.  
\end{proof}

\begin{defn}\label{0mg}
We define the functor 
\[
\Omega B^{\mathcal{G}} :
 Mon(\Shv_{\widehat{\mathcal{S}}}(\CAlg^{\mathcal{G}})) \to
 Gp(\Shv_{\widehat{\mathcal{S}}}(\CAlg^{\mathcal{G}}) 
\]
by the composition of the inclusion $i' :
 Mon(\Shv_{\widehat{\mathcal{S}}}(\CAlg^{\mathcal{G}}))  \to
 Mon(\Fun^{\prod}((\CAlg^{\mathcal{G}})^{op}, \widehat{\mathcal{S}}))$ with 
\[
 \xymatrix@1{
 Mon(\Fun^{\prod}((\CAlg^{\mathcal{G}})^{op}, \widehat{\mathcal{S}}))
 \to Gp(\Fun^{\prod}((\CAlg^{\mathcal{G}})^{op},
 \widehat{\mathcal{S}})) \to Shv_{Gp(\widehat{\mathcal{S}})}(\CAlg^{\mathcal{G}}),
}
\]
where the first functor is the functor which is induced from the pointwise group completion $\Omega B^{\prod} : Mon(\Fun^{\prod}((\CAlg^{\mathcal{G}})^{op}, \widehat{\mathcal{S}}))
 \to Gp(\Fun^{\prod}((\CAlg^{\mathcal{G}})^{op},
 \widehat{\mathcal{S}}))$, and the second functor is obtained by the
 equivalence in Lemma~\ref{ggn} and the
 sheafification functor $\tilde{(-)} : \Fun^{\prod}((\CAlg^{\mathcal{G}})^{op},
 Gp(\widehat{\mathcal{S}})) \to
 Shv_{Gp(\widehat{\mathcal{S}})}(\CAlg^{\mathcal{G}})$. Note that, we
 have $\Shv_{Gp(\widehat{\mathcal{S}})}(\CAlg^{\mathcal{G}}) \subset
 Gp(\Shv_{\widehat{\mathcal{S}}}(\CAlg^{\mathcal{G}}))$ by Lemma~\ref{sg}. 
\end{defn}

\begin{prop}\label{gpcmp}
Let $\CAlg^{\mathcal{G}}$ be the $\infty$-category equipped with the
 Grothendieck topology which is defined in Section 2. 
Then, $\Omega B^{\mathcal{G}}$ is the group completion on
 $\Shv_{\widehat{\mathcal{S}}}(\CAlg^{\mathcal{G}})$. 
\end{prop}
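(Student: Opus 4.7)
The plan is to realize $\Omega B^{\mathcal{G}}$ as a composite whose constituent adjunctions have already been established, and then to deduce the universal property by an adjunction chase. Write $i_{Mon}$ and $i_{Gp}$ for the fully faithful inclusions of $Mon(\Shv_{\widehat{\mathcal{S}}}(\CAlg^{\mathcal{G}}))$ into $Mon(\Fun^{\prod}((\CAlg^{\mathcal{G}})^{op}, \widehat{\mathcal{S}}))$ and of $Gp(\Shv_{\widehat{\mathcal{S}}}(\CAlg^{\mathcal{G}}))$ into $Gp(\Fun^{\prod}((\CAlg^{\mathcal{G}})^{op}, \widehat{\mathcal{S}}))$ respectively. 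Each admits a left adjoint given by sheafification, which exists by combining Lemma~\ref{ggn} with the general $\mathcal{C}$-valued sheafification of Definition~\ref{Shff} applied to the targets $Mon(\widehat{\mathcal{S}})$ and $Gp(\widehat{\mathcal{S}})$; denote the second by $L_{Gp}$. By the preceding lemma, $\Omega B^{\prod}$ is left adjoint to the presheaf-level forgetful functor $U : Gp(\Fun^{\prod}((\CAlg^{\mathcal{G}})^{op}, \widehat{\mathcal{S}})) \to Mon(\Fun^{\prod}((\CAlg^{\mathcal{G}})^{op}, \widehat{\mathcal{S}}))$. By Definition~\ref{0mg} together with Lemma~\ref{sg}, we may then write $\Omega B^{\mathcal{G}} = L_{Gp} \circ \Omega B^{\prod} \circ i_{Mon}$, and we denote by $V : Gp(\Shv_{\widehat{\mathcal{S}}}(\CAlg^{\mathcal{G}})) \to Mon(\Shv_{\widehat{\mathcal{S}}}(\CAlg^{\mathcal{G}}))$ the sheaf-level forgetful.

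Next I would verify, for $M \in Mon(\Shv_{\widehat{\mathcal{S}}}(\CAlg^{\mathcal{G}}))$ and $G \in Gp(\Shv_{\widehat{\mathcal{S}}}(\CAlg^{\mathcal{G}}))$, the chain of natural equivalences
\begin{align*}
\Map(\Omega B^{\mathcal{G}}(M), G)
&\simeq \Map(\Omega B^{\prod}(i_{Mon}(M)), i_{Gp}(G)) \\
&\simeq \Map(i_{Mon}(M), U(i_{Gp}(G))) \\
&\simeq \Map(i_{Mon}(M), i_{Mon}(V(G))) \\
&\simeq \Map(M, V(G)).
\end{align*}
The first equivalence is the sheafification adjunction $L_{Gp} \dashv i_{Gp}$; the second is the group completion adjunction $\Omega B^{\prod} \dashv U$ on presheaves; the third uses the identification $U \circ i_{Gp} = i_{Mon} \circ V$, which is the content of Lemma~\ref{sg} together with the fact that forgetting from groups to monoids is computed objectwise and therefore commutes with the inclusion of sheaves into presheaves; and the fourth uses fully faithfulness of $i_{Mon}$. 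Naturality in $M$ and $G$ exhibits $\Omega B^{\mathcal{G}}$ as left adjoint to $V$, which is by Definition~\ref{gba} exactly the statement that $\Omega B^{\mathcal{G}}$ is the group completion on $\Shv_{\widehat{\mathcal{S}}}(\CAlg^{\mathcal{G}})$.

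The step I expect to require the most care is the existence of the left adjoint $L_{Gp}$: one needs sheafification to operate at the level of $Gp(\widehat{\mathcal{S}})$-valued presheaves rather than merely presheaves of spaces, and to provide a left adjoint to the fully faithful $i_{Gp}$. This follows because $\mathcal{C}$-valued sheafification is defined whenever the target $\mathcal{C}$ admits limits, and $Gp(\widehat{\mathcal{S}})$ does; moreover, left exactness of sheafification~\cite[Definition 5.3.2.1]{HT} ensures that sheafifying the underlying product-preserving presheaf of spaces of a group object is again a group object, so the two procedures agree. The remaining steps of the adjunction chase are formal, so I do not anticipate substantial obstacles beyond this bookkeeping.
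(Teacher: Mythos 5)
Your proposal is correct and takes essentially the same route as the paper: the paper's proof is exactly this adjunction chase, $\Map(\Omega B^{\mathcal{G}} F,\, G) \simeq \Map(\Omega B^{\prod} i'(F),\, i_{Gp}(G)) \simeq \Map(F,\, V(G))$, combining the sheafification adjunction with the presheaf-level group completion coming from Lemma~\ref{yy} via Lemma~\ref{ggn}. Your write-up only adds explicit bookkeeping (the identification $U \circ i_{Gp} \simeq i_{Mon} \circ V$ and full faithfulness of the inclusion of sheaves into presheaves) that the paper leaves implicit.
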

\begin{proof}
By the adjunction in Lemma~\ref{yy} and the definition of $\Omega B^{\mathcal{G}}$, we have 
\[ 
 \xymatrix@1{
\Map(\Omega B^{\mathcal{G}} F, \, G) \simeq \Map((-)^{\simeq} \circ \Omega B \circ i' (F), \, G) \simeq \Map(\Omega B iF, \, iG) \simeq \Map(F, \, iG ). 
} 
\]
\end{proof}

\section{Classifying sheaf of $GL$ and projective modules of finite rank}
\subsection{The affine spectral scheme $GL_n$}
For a simplicial set $S$ and its vertexes $x$ and $y$, recall that a
simplicial set $\Map_S(x, \, y)$ is defined as follows. This construction is due to Joyal. 

\begin{defn}\label{mapp}
[cf. \cite{HT} 1.2.2.2, Corollary 4.2.1.8]\label{mapsp}
Let $S$ be a simplicial set and $S^{\Delta^1}$ a simplicial
 set which sends $[n]$ to $\Hom_{\sSet}(\Delta^1 \times \Delta^n, \,
 S)$. Let $s, t : [0] \to [1]$ be maps defined by $s(0) =0$ and $t(0)=1$. 
\begin{enumerate}[(i)]
\item Take vertexes $x, y \in S$. The mapping space $\Map_S(x,\, y)$ from $x$ to
 $y$ is defined by the pullback 
\[ 
 \xymatrix@1{
\Map_S(x,\, y) \ar[r] \ar[d] & S^{\Delta^1} \ar[d]^{(s,t)} \\
\ast \ar[r]^{(x,y)} & S \times S,
} 
\]
where the morphism $(s,t)$ is induced by $s$ and $t$, e.g., it sends $\Hom_{\sSet}(\{0\} \times \Delta^n, \,
 S)$ to the $n$-simplices $S_n$ of the first factor and $\Hom_{\sSet}(\{1\}
 \times \Delta^n, \, S)$ to the $n$-simplices $S_n$ of the second
 factor, and the morphism $(x,y)$ sends $\ast$ to $(x,y) \in S \times
 S$.  
\item If $S$ is an $\infty$-category, $\Map_{S}(x, \,
y)$ becomes a Kan complex by \cite[Proposition 1.2.2.3]{HT}. 
For an $\infty$-category $S$ and objects $x, y \in S$, we say that $\Map_{S}(x, \,
y)$ is the mapping space between $x$ and $y$.
\end{enumerate}
\end{defn}

\begin{defn}[\cite{HA} Notation 3.1.3.8]\label{259}
Let $R$ be an $\E$-ring, and $\CAlg_R$ the $\infty$-category of
 $R$-algebras. 
\begin{enumerate}[(i)]
\item We define a functor $\mathrm{Sym}_R : \Mod_R \to \CAlg_R$ the left
      adjoint of the forgetful functor $\CAlg_R \to \Mod_R$ which sends an $R$-algebra $S$ to
an $R$-module $S$. 
\item For a free $R$-module $R^{\oplus n^2}$ of rank $n^2$, $\pi_0 \Sym_R R^{\oplus n^2}$ is isomorphic to the polynomial ring $(\pi_0 R)[x_{11},
 \cdots, x_{nn}]$ over $\pi_0 R$. Let us denote $\Sym_R R^{\oplus n^2}$
 by $R\{x_{11}, \cdots, x_{nn}\}$, where each $x_{ij}$ is the
 representative of the indeterminate of the polynomial ring over $\pi_0
 R$. 
\end{enumerate}
\end{defn}
\begin{defn}[\cite{HA} Section 4.7.2]
For an object $X$ in a symmetric monoidal $\infty$-category, an endomorphism object $\End(X)$ is an object equipped with the evaluation morphism $e: \End(X) \otimes X \to X$ which induces a weak homotopy equivalence $\Map(Y \otimes X, \, X) \simeq \Map(Y, \, \End(X))$ for every $Y$. 

Let $R$ be an $\E$-ring. 
We have the endomorphism object of $R^{\oplus n}$ in $\Mod_R$. Let us
 denote it by $\End_R(R^n)$. 
\end{defn}
The following lemma is explained in \cite[Remark 7.1.2.2]{HA}. 
\begin{lemma}[\cite{HA} Remark 7.1.2.2]
We have an isomorphism 
\[
 \pi_{\ast} \End_R(R^{\oplus n}) \cong \pi_{\ast} \Map_R(R^{\oplus n}, \, R^{\oplus n})
\]
\end{lemma}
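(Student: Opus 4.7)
The plan is to deduce the isomorphism directly from the defining universal property of the endomorphism object together with the fact that shifted free modules corepresent homotopy groups in $\Mod_R$.

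First I would recall that for any $R$-module $M$, there is a natural isomorphism
\[
 \pi_k M \cong \pi_0 \Map_R(\Sigma^k R, M)
\]
for every integer $k$, since $R$ corepresents the underlying spectrum functor on $\Mod_R$ and $\Sigma^k$ is its $k$-fold shift. This reduces the problem to identifying the mapping spaces $\Map_R(\Sigma^k R, \End_R(R^{\oplus n}))$ with $\Map_R(\Sigma^k R^{\oplus n}, R^{\oplus n})$ for each $k$, then applying the same corepresentation result on the other side.

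Next I would apply the defining property of the endomorphism object, taking $Y = \Sigma^k R$ in the equivalence $\Map_R(Y \otimes_R R^{\oplus n}, R^{\oplus n}) \simeq \Map_R(Y, \End_R(R^{\oplus n}))$ stated in the previous definition. Combined with the compatibility of the tensor product over $R$ with shifts and finite coproducts (so that $\Sigma^k R \otimes_R R^{\oplus n} \simeq \Sigma^k R^{\oplus n} \simeq (R^{\oplus n})[k]$), this yields
\[
 \Map_R(\Sigma^k R, \End_R(R^{\oplus n})) \simeq \Map_R(\Sigma^k R^{\oplus n}, R^{\oplus n}).
\]
Taking $\pi_0$ of both sides, the left side computes $\pi_k \End_R(R^{\oplus n})$ by the corepresentation observation above, while the right side is by definition $\pi_k \Map_R(R^{\oplus n}, R^{\oplus n})$, giving the claimed isomorphism termwise in $k$.

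The main obstacle, such as it is, is essentially bookkeeping: one must be careful that the weak homotopy equivalence coming from the endomorphism object is natural enough in $Y$ to induce an isomorphism on homotopy groups rather than merely $\pi_0$ at a single test object, and that the identification $\Sigma^k R \otimes_R R^{\oplus n} \simeq (R^{\oplus n})[k]$ is performed inside $\Mod_R$ (where the tensor product is symmetric monoidal and commutes with shifts, by the results of \cite{HA} Section 4.2 cited earlier). Since everything follows formally from the universal property already recorded, the argument will be short and essentially a single display of natural equivalences.
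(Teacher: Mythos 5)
Your argument is correct, but note that the paper does not actually prove this lemma at all: it is quoted from \cite{HA} (Remark 7.1.2.2) and closed with an immediate \qed, so there is no internal proof to compare against. What you have written is essentially the standard argument lying behind Lurie's remark, reconstructed from the universal property recorded in the preceding definition: evaluate the equivalence $\Map_R(Y\otimes_R R^{\oplus n}, R^{\oplus n})\simeq \Map_R(Y,\End_R(R^{\oplus n}))$ at $Y=\Sigma^k R$, use $\Sigma^k R\otimes_R R^{\oplus n}\simeq \Sigma^k R^{\oplus n}$, and use that $\Sigma^k R$ corepresents $\pi_k$ on $\Mod_R$. This buys a self-contained proof where the paper merely cites a reference, which is a genuine improvement in readability. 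Two small points of bookkeeping: first, you do not actually need naturality of the endomorphism-object equivalence in $Y$, since you only ever take $\pi_0$ at one test object $Y=\Sigma^k R$ at a time; the identification $\pi_k\Map_R(M,N)\cong\pi_0\Map_R(\Sigma^k M,N)$ follows separately from $\Map_R(\Sigma^k M,N)\simeq\Omega^k\Map_R(M,N)$ in the stable $\infty$-category $\Mod_R$, and since these mapping spaces are grouplike the choice of basepoint is harmless. Second, be explicit about degrees: $\pi_k$ of the mapping \emph{space} $\Map_R(R^{\oplus n},R^{\oplus n})$ only makes sense for $k\ge 0$, so the displayed isomorphism should either be read in nonnegative degrees or with $\Map_R$ interpreted as the mapping spectrum (morphism object); in the paper's setting $\End_R(R^{\oplus n})\simeq R^{\oplus n^2}$, so for connective $R$ nothing is lost. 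With those caveats stated, your proof is complete and can stand in place of the bare citation.
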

\qed

The object $R^{\oplus n^2} $ of $\Mod_R$ satisfies the universal property of endomorphism, we have an equivalence $R^{\oplus n^2} \simeq \End_R(R^{\oplus n})$. 
Since we have $R^{\oplus n^2} \simeq \End_R(R^{\oplus n})$, we have an
equivalence $\Sym_R \End_R(R^{\oplus n}) \simeq R\{x_{11}, \cdots,
x_{nn}\}$ by Definition~\ref{259} (ii). 

\begin{defn}
Let $R$ be an $\E$-ring. Let us denote $\Spec^{Zar} \Sym_R
 \End_R(R^{\oplus n})$ by $M_{n, R}$ in $\Shv(\CAlg^{Zar})$. 
\begin{enumerate}[(i)]
\item Under the equivalence $\Sym_R \End_R(R^{\oplus n}) \simeq R\{x_{11}, \cdots, x_{nn}\}$, we define the element $(det) \in \pi_0 M_{n, R}$ by the determinant relation $\Sigma_{\tau \in S_n} sgn(\tau) \prod
 x_{i, \tau(i)}$ of $x_{ij}s$. 
\item We define an affine scheme $GL_{n, R}$
 by inverting the determinant element of $M_{n, R}$. 
\item In the
 case that the base scheme $R$ is the sphere spectrum $\mathbb{S}$, we
      denote $M_{n, R}$ and $GL_{n, \mathbb{S}}$ by $M_n$ and $GL_n$. 
\end{enumerate}
\end{defn}
Let $S$ be an $R$-algebra. We denote by $M_{n, R}(S)$ and $GL_{n, R}(S)$
the mapping spaces $\Map_{\CAlg_R}(\Sym_R \End_R(R^{\oplus n}), S)$ and $\Map_{\CAlg_R}(\Sym_R \End_R(R^{\oplus n})[(det)^{-1}], S)$ respectively.

\begin{rem}\label{juyo}
Since $GL_n$ is corepresented by an $\E$-ring, it is flat sheaf
 by~\cite[VII, Theorem 5.15]{DAG7}, so that it is already a Nisnevich
 sheaf. We also use the notation $GL_n$ for the image in
 $\Shv(\CAlg^{Nis})$ under the sheafification. 
\end{rem}

\subsection{The equivalence $GL_n(R) \simeq Aut(R^n)$ as $\infty$-groups}
\begin{defn}\label{Aut}
We define the space $Aut_R(S^n)$ by the following pullback of simplicial sets
\[ 
 \xymatrix@1{
Aut_R(S^n) \ar[r] \ar[d] & \Map_{\Mod_R}(S^n, S^n) \ar[d]^{\pi_0} \\
 (\pi_0 \Map_{\Mod_R}(S^n, S^n))^{\times} \ar[r] & \pi_0 \Map_{\Mod_R}(S^n, S^n),
} 
\]
where we regard $(\pi_0 \Map_{\Mod_R}(R^n, R^n))^{\times}$ and $\pi_0 \Map_{\Mod_R}(R^n, R^n)$ as constant simplicial sets and $(\pi_0 \Map_{\Mod_R}(R^n, R^n))^{\times}$ is the invertible objects in $\pi_0 \Map_{\Mod_R}(R^n, R^n)$. 
\end{defn}
\begin{rem}\label{e43} 
Note that the constant simplicial sets which appear in the diagram of
 Definition~\ref{Aut} are also Kan complexes since the homotopy set of
 the mapping space is made from the same mapping space by replacing
 $1$-simplices with isomorphism. (Note that any $1$-simplex in a Kan complex is invertible. ) 
\end{rem}
\begin{prop}\label{GLgp}
For an $R$-algebra $S$, we have an equivalence $GL_{n,R}(S) \simeq
 Aut_R(S^n)$ as $\infty$-groups in $\mathcal{S}$, which is functorial
 with respect to $R$-algebra $S$. 
\end{prop}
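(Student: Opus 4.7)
The plan is to establish the equivalence in three stages: first identify the underlying spaces of $M_{n,R}(S)$ and $\Map_{\Mod_S}(S^n, S^n)$, then pass to the $GL$/$Aut$ level via the universal property of localization, and finally verify that the monoid structures agree so the equivalence upgrades to one of $\infty$-groups, naturally in $S$. For the first stage, I would apply the $\Sym_R$--forgetful adjunction of Definition~\ref{259}(i) to obtain
\[
 M_{n,R}(S) = \Map_{\CAlg_R}(\Sym_R \End_R(R^{n}), S) \simeq \Map_{\Mod_R}(\End_R(R^{n}), S).
\]
Next, using the extension-of-scalars adjunction along $R \to S$ together with the equivalence $\End_R(R^n) \otimes_R S \simeq \End_S(S^n)$ (both sides are free of rank $n^2$), I would rewrite this as $\Map_{\Mod_S}(\End_S(S^n), S)$. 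By the self-duality of the free $S$-module $S^{n^2}$, this space is $\Omega^\infty \End_S(S^n)$, which is further equivalent to $\Map_{\Mod_S}(S^n, S^n)$ by the universal property of the endomorphism object with $Y = S$ (cf.~\cite[Remark 7.1.2.2]{HA}). The composite equivalence is evidently natural in $S \in \CAlg_R$.

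For the second stage, I would invoke the universal property of the localization $\Sym_R \End_R(R^n)[(det)^{-1}]$ in $\CAlg_R$: a map $\Sym_R \End_R(R^n) \to S$ factors through the localization, uniquely up to a contractible space of choices, if and only if the image of $(det)$ is a unit in $\pi_0 S$~\cite[Remark 2.9]{DAG7}. Translated through the first-stage equivalence, this amounts to requiring that the determinant of a matrix $A \in \pi_0 \Map_{\Mod_S}(S^n, S^n)$ be a unit in $\pi_0 S$, which by classical linear algebra is equivalent to $A$ being an invertible element of the matrix ring $\pi_0 \Map_{\Mod_S}(S^n, S^n)$. Consequently $GL_{n,R}(S)$ is identified with the pullback defining $Aut_R(S^n)$ in Definition~\ref{Aut}.

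The main obstacle will be the group-theoretic upgrade. On the $GL_{n,R}$ side the multiplication comes from the Hopf-algebra comultiplication $x_{ij} \mapsto \sum_k x_{ik} \otimes x_{kj}$ on $\Sym_R \End_R(R^n)[(det)^{-1}]$, whereas on $Aut_R(S^n)$ it is composition of endomorphisms. I would trace the comultiplication through the first-stage chain: under the $\Sym_R$--forgetful adjunction it corresponds to the universal composition pairing $\End_R(R^n) \otimes_R \End_R(R^n) \to \End_R(R^n)$, and under base change and self-duality it becomes composition on $\End_S(S^n) \simeq \Map_{\Mod_S}(S^n, S^n)$. Hence the equivalence lifts to monoid objects in $\mathcal{S}$, and restricting to the $\pi_0$-invertible locus yields the claimed equivalence of $\infty$-groups, functorially in $S$.
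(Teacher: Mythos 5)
Your argument is correct and takes essentially the same route as the paper's proof: the paper likewise uses the $\Sym_R$-adjunction and base change to identify $M_{n,R}(S)$ with the endomorphism space of $S^{\oplus n}$, cuts out $GL_{n,R}(S)$ by the condition that the determinant be invertible in $\pi_0 S$ (phrased there as a pullback square of Kan complexes compared, via the coglueing lemma, with the pullback defining $Aut_R(S^n)$), and obtains the group structure from composition of module endomorphisms. Your appeal to the universal property of the localization and to a comultiplication induced by the composition pairing is just a repackaging of those same steps, so there is no substantive difference in approach.
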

\begin{proof}
Note that the right vertical morphism $\pi_0$ in the diagram in
 Definition~\ref{Aut} is a Kan fibration. 

Since $GL_{n,R}(S)$ is formulated by the following pullback of simplicial sets
\begin{equation}\label{e42}
 \xymatrix@1{
GL_{n,R}(S) \ar[r] \ar[d] & M_{n, R}(S) \ar[d]^{\pi_0} \\
 (\pi_0 M_{n, R}(S))^{\times} \ar[r] & \pi_0 M_{n, R}(S),
} 
\end{equation}
where we regard $\pi_0 M_{n, R}(S)$ as a constant simplicial set and
 $(\pi_0 M_{n, R}(S))^{\times}$ is the invertible objects in $\pi_0
 M_{n, R}(S)$, and by the coglueing lemma (cf. \cite{HT} A.2.4.3), it
 suffices to construct the two morphisms $M_{n,
 R}(S) \to \Map_{\Mod_R}(S^n, S^n)$ and $\pi_0 M_{n, R}(S) \to \pi_0
 \Map_{\Mod_R}(S^n, S^n)$ which preserve the multiplication and show that the following diagram is commutative:
\begin{equation}\label{e41} 
 \xymatrix@1{
 (\pi_0 M_{n, R}(S))^{\times} \ar[r] \ar[d] & \pi_0 M_{n, R}(S) \ar[d] & M_{n,
 R}(S) \ar[d] \ar[l]^{\pi_0} \\
 (\pi_0 \Map_{\Mod_R}(S^n, S^n))^{\times} \ar[r] &\pi_0
 \Map_{\Mod_R}(S^n, S^n) & \Map_{\Mod_R}(S^n, S^n) \ar[l]^{\pi_0}.  
} 
\end{equation}
Note that objects in the diagram (\ref{e42}) are Kan complexes by the same reason of Remark~\ref{e43}

An element of $M_{n, R}(S)$ is a morphism $\Sym_R \End_R (R^{\oplus n}) \to S$ of $R$-algebras for an $R$-algebra $S$. By the adjointness of $\Sym_R$, the morphism of $R$-algebras is corresponding to a morphism of $R$-modules
\[
 R^{\oplus n^2} \simeq \End_R(R^{\oplus n}) \to S. 
\]
On the other hand, we have an equivalence 
$R^{\oplus n^2} \simeq \End_R(R^{\oplus n})$ obtained by evaluating the each factor of $R^n$. 
Therefore, if we regard $S \otimes_R R^{\oplus n^2}$ as
an $S$-module, the above morphism corresponds to a morphism $S \otimes_R
R^{\oplus n^2} \to S$ of $S$-modules. This gives an identification
$M_{n, R}(S) \simeq \End_S(S^{\oplus n})$ as $S$-modules. 

To show that $GL_{n,R}(S) \to Aut_R(S^n)$ is a morphism of
 $\infty$-groups, we fix the choice $\Map_{\Mod_S}(S^{\oplus n^2}, \, S) \simeq \Map_{\Mod_S}(S, \, S)^{\oplus n^2} \simeq \Map_{\Mod_S}(S^n, \, S^n)$ of the second equivalence. 
Then, by the composition of $S$-module endomorphisms on $S^{\oplus n}$, $\End_S(S^{\oplus n})$ has a canonical $\infty$-monoid structure
for each $S$ defined in Definition~\ref{mono1}, the spectral scheme $M_{n, R}$ is an
$\infty$-monoid. 

We can identify the discrete group $\pi_0 M_{n, R}(S)^{\times}$ with
 the class of $\pi_0 S$-algebra morphisms $\pi_0 \Sym_R \End_R(R^{\oplus
 n})[det^{-1}] \to \pi_0 S$. Here, $det$ is the determinant element given
 by the determinant relation of $x_{ij}s$ in $\pi_0 R[x_{11}, \cdots ,
 x_{nn}]$. By ordinary theory of affine group schemes, these morphisms
 corresponds to $(\pi_0 \End_S(S^{\oplus n}))^{\times}$ by the same
 choice of isomorphism $\Hom(\pi_0 S, \,
 \pi_0 S)^{\oplus n^2} \simeq \Hom(\pi_0 S^n, \, \pi_0 S^n)$. Since
 $2$-morphisms are invertible, this induces an equivalence between
 $Aut_R(S^n)$ and $\Sym_R \End_R(R^{\oplus n})[det^{-1}] \to S$ by the
 above pullback. 

Since we fix the choice of equivalences in the proof, the affine scheme
$GL_{n,R}$ is an $\infty$-group scheme with respect to the monoid structure
of $\Map_{\Mod_R}(R^n, R^n)$. 
\end{proof}

\subsection{The $\widehat{\Cat{\infty}}$-valued functor $(nProj)$}

In this subsection, the functor $(nProj) : (\CAlg^{cn})^{op} \to \widehat{\Cat{\infty}}$ defined by sending an $\E$-ring $R$ to the maximal Kan complex of $\infty$-category $\Mod_R^{\infty proj}$ of projective modules of finite rank is a sheaf. 

\begin{defn}[\cite{DAG8} Definition 2.6.14]\label{2614}
Let $P$ be a property for objects $(A, M)$ in an $\infty$-category $\CAlg^{cn} \times_{\CAlg}
 \Mod$. We say that $P$ is local for the flat topology if the following
 conditions are satisfied: 
\begin{enumerate}[(i)]
\item Let $f: A \to B$ be a flat morphism of connective $\E$-rings, and
      $M$ an $A$-module. If $(A, M)$ has the property $P$, $(B, B
      \otimes_A M)$ has the property $P$. If $f$ is faithfully flat, the
      converse holds. 
\item For any finite collection $(A_i, M_i)$ of the objects in
      $\CAlg^{cn} \times_{\CAlg} \Mod$ such that each $(A_i, M_i)$ has the property
      $P$, the product $(\prod A_i, \prod M_i)$ has the property $P$. 
\end{enumerate}
\end{defn}
\begin{thm}[\cite{DAG7} Corollary 6.13, Lemma 6.17]\label{617}
A functor $(\CAlg^{cn})^{op} \to \widehat{\Cat{\infty}}$ given by $R \mapsto \Mod_R$ is a sheaf.
\end{thm}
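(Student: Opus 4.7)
The plan is to apply the criterion of Proposition~\ref{57}, which reduces sheafhood on $\CAlg^{op}$ with the flat topology to two conditions: preservation of finite products, and Čech descent along a single faithfully flat morphism $A \to B$. By Lemma~\ref{cover} it will then suffice to verify these for the flat topology, since any flat sheaf restricts to a Zariski and Nisnevich sheaf on $\CAlg^{cn}$.

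For finite products, given a finite family $\{A_i\}$ in $\CAlg^{cn}$, the orthogonal idempotents $e_i \in \pi_0(\prod A_i)$ are central, and base change along the projections $\prod A_i \to A_i$ yields a family of functors $\Mod_{\prod A_i} \to \prod \Mod_{A_i}$. I would check that this functor is an equivalence by verifying joint conservativity (any module $M$ with $M \otimes_{\prod A_i} A_i \simeq 0$ for all $i$ is zero) and essential surjectivity (any $(M_i)$ lifts to $\bigoplus e_i \cdot M_i$); both reduce via $\pi_\ast$ to the classical splitting of modules over a product ring.

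The substantive step is Čech descent along a faithfully flat map $A \to B$. The base change $F = -\otimes_A B : \Mod_A \to \Mod_B$ is left adjoint to restriction of scalars $U$, and I would apply the monadic form of the Barr--Beck--Lurie theorem (\cite{HA}): $U$ is conservative and preserves $U$-split totalizations, so the induced functor to modules over the monad $T = UF$ is an equivalence. Identifying $T$-modules with $\lim_\Delta \Mod_{B^{\otimes_A (\bullet+1)}}$, where $B^{\otimes_A (\bullet+1)}$ is the Čech nerve of $A \to B$, then yields the required equivalence $\Mod_A \simeq \lim_\Delta \Mod_{B^\bullet}$, which is exactly the Čech descent formulation demanded by Proposition~\ref{57}.

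The principal obstacle is establishing that $F = -\otimes_A B$ is conservative when $A \to B$ is faithfully flat in the spectral sense. Since $\Mod_A$ is a stable $\infty$-category, vanishing of an object must be tested on every $\pi_n$, and one needs the flatness isomorphism $\pi_n(M \otimes_A B) \cong \pi_n(M) \otimes_{\pi_0 A} \pi_0 B$ (which itself requires that $B$ be flat over $A$, so that the Tor spectral sequence collapses) to reduce the claim to ordinary faithful flatness of $\pi_0 A \to \pi_0 B$. A secondary, more bookkeeping difficulty is verifying the Beck--Chevalley condition that makes the comparison to the totalization tower functorial in the Čech nerve, and ensuring size issues are handled by working in $\widehat{\Cat{\infty}}$ as in Definition~\ref{big}; both are standard once the conservativity input is in place.
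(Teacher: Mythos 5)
The paper does not actually prove this statement: it is imported from Lurie (\cite{DAG7}, Corollary 6.13 and Lemma 6.17) with no argument in the text, so your proposal is effectively an attempt to reprove Lurie's flat-descent theorem. Your overall skeleton is the standard one and matches Lurie's: reduce to the flat topology via Proposition~\ref{57} and Lemma~\ref{cover}, check preservation of finite products by splitting along the idempotents of $\pi_0(\prod A_i)$, and obtain \v{C}ech descent along a faithfully flat $A \to B$ from a Barr--Beck--Lurie argument, with faithful flatness entering through the isomorphism $\pi_n(M\otimes_A B) \cong \pi_n(M)\otimes_{\pi_0 A}\pi_0 B$. The finite-products step and the conservativity input are fine as sketched.

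The Barr--Beck step, however, is stated in a form that proves the wrong equivalence. Applying the \emph{monadic} theorem to the adjunction $F = -\otimes_A B \dashv U$ (restriction of scalars), with monad $T = UF$ on $\Mod_A$, yields only $\Mod_B \simeq \Mod_T(\Mod_A)$, i.e.\ that $B$-modules are modules over the algebra $B$ in $\Mod_A$; this is essentially tautological and is not descent, and the claimed identification of $T$-modules with $\lim_{\Delta}\Mod_{B^{\otimes_A(\bullet+1)}}$ is false as stated. What is needed is the dual, \emph{comonadic} form applied to $F$ itself: $F$ must be conservative (this is where faithful flatness enters --- note that conservativity of $U$, which you invoke first, is automatic and irrelevant) and $F$ must preserve totalizations of $F$-split cosimplicial objects; one then identifies comodules over the comonad $FU$ on $\Mod_B$ (descent data) with $\lim_{\Delta}\Mod_{B^{\otimes_A(\bullet+1)}}$ via the cosimplicial Beck--Chevalley conditions. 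Neither of these last two points is mere bookkeeping: the preservation of the relevant totalizations is a second genuine use of flatness, and the Beck--Chevalley identification is exactly the content of the lemmas of \cite{DAG7}, Section 6 (compare also \cite{HA}, Section 4.7.5) that the paper is citing. So the route is recoverable, but as written the central step must be dualized and the two omitted verifications supplied before the argument closes.
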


\begin{lemma}\label{53}
Let $(nProj) : (\CAlg^{cn})^{op} \to \widehat{\Cat{\infty}}$ be a functor given
 by $R \mapsto \Mod_R^{nproj}$. Then, the functor $(nProj)$ is a sheaf with
 respect to flat topology. 
\end{lemma}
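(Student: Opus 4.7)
The plan is to invoke Proposition~\ref{57} to reduce the sheaf condition for $(nProj)$ on the flat topology to two requirements: preservation of finite products, and descent of the form $(nProj)(A) \simeq \lim_{\Delta} (nProj)(B^{\bullet})$ for each faithfully flat morphism $A \to B$ with \v{C}ech nerve $B^{\bullet}$. The underlying sheaf $R \mapsto \Mod_R$ already satisfies both conditions by Theorem~\ref{617}, so the entire task is to show that the full sub-$\infty$-category $\Mod_R^{nproj} \subset \Mod_R$ is preserved under the relevant equivalences. Equivalently, in the language of Definition~\ref{2614}, I must verify that ``projective of rank $n$'' is local for the flat topology.

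For finite products, the equivalence $\Mod_{\prod A_i} \simeq \prod \Mod_{A_i}$ coming from Theorem~\ref{617} sends a module $M$ to the tuple $(e_i M)$ associated to the orthogonal idempotents $e_i \in \pi_0 \prod A_j$. Since an element of $\pi_0 \prod A_j = \prod \pi_0 A_j$ generates the unit ideal iff each of its components does, and since freeness of rank $n$ visibly decomposes componentwise, the property ``projective of rank $n$'' splits in the same way; hence $(nProj)$ preserves finite products.

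For descent along a faithfully flat $A \to B$, Theorem~\ref{617} yields $\Mod_A \simeq \lim_{\Delta} \Mod_{B^{\bullet}}$, and I would verify that the restriction to $nproj$ is still an equivalence by checking essential surjectivity: if $M \in \Mod_A$ satisfies $B \otimes_A M \in \Mod_B^{nproj}$, then $M \in \Mod_A^{nproj}$. This splits into two sub-claims: (i) $M$ is finitely generated projective over $A$, and (ii) the rank function of $M$ is identically $n$ on $\Spec \pi_0 A$.

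The principal obstacle is (i): faithfully flat descent of the ``finitely generated projective'' property for modules over connective $\E$-rings. I would handle it by characterizing such modules as the compact projective objects of $\Mod_R^{cn}$ and invoking descent of compactness and of projectivity along the faithfully flat cover via the equivalence $\Mod_A \simeq \lim_{\Delta} \Mod_{B^{\bullet}}$, so that being a retract of a free module of finite rank is detected on $B$. Granted (i), claim (ii) is straightforward: a finitely generated projective $A$-module has a locally constant rank function $p \mapsto \dim_{\kappa(p)} M \otimes_A \kappa(p)$ on $\Spec \pi_0 A$, and since $\Spec \pi_0 B \to \Spec \pi_0 A$ is surjective by faithful flatness of $\pi_0 A \to \pi_0 B$, the hypothesis that $B \otimes_A M$ has rank $n$ everywhere forces the same for $M$, producing the required collection $x_1,\dots,x_m \in \pi_0 A$ generating the unit ideal on whose localizations $M$ is free of rank $n$.
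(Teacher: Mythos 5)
Your reduction is the same as the paper's: both arguments start from Theorem~\ref{617} (that $R \mapsto \Mod_R$ is a sheaf for the flat topology) and reduce the lemma to showing that ``projective of rank $n$'' is local for the flat topology in the sense of Definition~\ref{2614}, so that the subfunctor $(nProj)$ inherits descent. The difference lies in how locality is verified. For finite products the paper chooses elements $f_{il} \in \pi_0 A_i$ trivializing each $M_i$ and checks directly that the corresponding localizations of $\prod_i M_i$ are free of rank $n$, whereas you use the orthogonal idempotents of $\pi_0 \prod_i A_i$; both work, and yours is arguably cleaner. For descent along a faithfully flat $A \to B$, the paper simply cites \cite[Proposition 2.6.15]{DAG8} for the fact that ``finitely generated projective'' is a flat-local property and adds that tensoring preserves rank; you instead propose to reprove this via compact projective objects, but as written the decisive sentence (``invoking descent of compactness and of projectivity'') names precisely the statement that needs proof rather than establishing it, so you should either cite the DAG8 result, as the paper does, or supply the standard argument (flatness and almost perfectness both descend along faithfully flat maps, and a connective module over a connective $\E$-ring that is flat and almost perfect is finitely generated projective). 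On the other hand, your handling of the rank condition under descent --- the locally constant rank function together with surjectivity of $\Spec \pi_0 B \to \Spec \pi_0 A$ --- is more explicit than the paper's remark that ``the tensor product preserves rank,'' which only covers the forward direction of Definition~\ref{2614}(i).
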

\begin{proof}
Since a functor $(\CAlg^{cn})^{op} \to
 \widehat{\Cat{\infty}}$ given by $R \mapsto \Mod_R$ is already a
 sheaf By Theorem~\ref{617}, it suffices to check that the projective
 modules of rank $n$ satisfy the condition of
 Definition~\ref{2614} (i) and (ii). 

By \cite[Proposition 2.6.15 (1), (6), (9)]{DAG8}, the condition of
 finitely generated projective is flat local property. Since the tensor
 product preserves rank, the projective
 modules of rank $n$ satisfy the condition of
 Definition~\ref{2614} (i). 

We will check the condition $(ii)$ of Definition~\ref{2614}.
Assume that $(A_i, \, M_i)$ is a pair such that $A_i$ is connective
 $\E$-ring and $M_i$ is a projective $A_i$-module of rank $n$ for $1 \le
 i \le m$. 
To show that $\prod_i M_i$ is a projective $\prod_i
 A_i$-module of rank $n$, it suffices to show that there exists a finite
 set $\{g_a\}_{a}$ of objects in $\prod_i A_i$ such that each $\prod_i
 M_i [g_a^{-1}]$ is a free $\prod_i A_i [g_a^{-1}]$-module of rank $n$. 
We choose such $\{g_a\}_a$ as follows. 

For each $i$, we take $f_{i1}, \cdots , f_{ik}
 \in \pi_0 A_i$
 such that $(M_i)[f_{il}^{-1}]$ is a free $A_i [f_{il}^{-1}]$-module of
 rank $n$ for $1 \le l \le k$. 
Since $\prod_i A_i \to A_i$ is flat and the essential image is generated
 by the form $\prod (A_i \otimes_A A_j) \otimes_{A_j} M$ and $A_i
 \otimes_A A_j \simeq 0$, $A_j \otimes_{\prod_i A_i} \prod_i M_i \simeq
 M_j$. 

From this, we have the equivalence $\prod_i A_i[f_{il}^{-1}]
 \otimes_{\prod_i A_i} \prod_i M_i \simeq \prod_i M_{i}[f_{il}^{-1}]$,
 where $\prod_i M_{i}[f_{il}^{-1}]$ is regarded as a $\prod_i A_i
 [f_{il}^{-1}]$-module and is free of rank $n$.  
\end{proof}
\begin{cor}
(nProj) is a Zariski and Nisnevich sheaf. 
\end{cor}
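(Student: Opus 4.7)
The plan is to deduce this as an immediate formal consequence of Lemma~\ref{53} together with Lemma~\ref{cover}. Since a sheaf condition is preserved under passage to a coarser topology, it suffices to observe that the Zariski and Nisnevich topologies on $\CAlg^{cn}$ are coarser than (i.e.\ contained in) the flat topology.

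More precisely, I would argue as follows. By Lemma~\ref{cover}(ii) every Zariski covering sieve and every Nisnevich covering sieve on $\CAlg^{cn}$ is also a covering sieve for the flat topology. Fix an object $R \in \CAlg^{cn}$ and a Zariski (resp.\ Nisnevich) covering sieve $\mathcal{C}$ on $R$; then $\mathcal{C}$ is in particular a flat covering sieve on $R$. Applying Lemma~\ref{53}, the canonical map
\[
 (nProj)(R) \longrightarrow \lim_{(S \to R) \in \mathcal{C}^{op}} (nProj)(S)
\]
is an equivalence in $\widehat{\Cat{\infty}}$, which is precisely the sheaf condition for $(nProj)$ with respect to the Zariski (resp.\ Nisnevich) topology.

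I expect no serious obstacle: the content is entirely bookkeeping about comparing Grothendieck topologies. The only care needed is to note that we are using the universe enlargement from Definition~\ref{big} so that the target $\widehat{\Cat{\infty}}$ admits the relevant (small) limits appearing in the sheaf condition, and that the sheaf condition of \cite[Definition 6.2.2.6]{HT} is phrased uniformly in terms of covering sieves, so the inclusion of covering sieves established in Lemma~\ref{cover}(ii) gives the implication directly, without any further analysis of the specific structure of the covers.
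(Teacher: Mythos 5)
Your argument is correct and is exactly the paper's own proof, which simply cites Lemma~\ref{cover} and Lemma~\ref{53}: Zariski and Nisnevich covering sieves are flat covering sieves, so the flat descent established in Lemma~\ref{53} gives the sheaf condition for the coarser topologies. Your elaboration of the bookkeeping (covering-sieve inclusion plus the universe remark) is fine and adds nothing problematic.
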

\begin{proof}
It follows from Lemma~\ref{cover} and Lemma~\ref{53}. 
\end{proof}


\begin{defn}[\cite{HT} Theorem 3.1.5.1]
Let $\widehat{\mathcal{S}}$ be the $\infty$-category of spaces and
 $\widehat{\Cat{\infty}}$ the $\infty$-category of
 $\infty$-categories defined in Definition~\ref{big} after enlarging the universe respectively. 
We regard the Kan complexes as $\infty$-categories, so that we have an inclusion 
 functor $i : \widehat{\mathcal{S}} \to \widehat{\Cat{\infty}}$. 

Since the inclusion preserves small colimits, by \cite[Corollary
 5.5.2.9]{HT}, there is an adjunction 
\begin{equation}\label{shf}
i: \widehat{\mathcal{S}} \rightleftarrows \widehat{\Cat{\infty}} (-)^{\simeq}.
\end{equation}
\end{defn}
Note that the right adjoint $(-)^{\simeq}$ is given by taking the maximal $\infty$-groupoid. 


\begin{lemma}
The right adjoint in (\ref{shf}) induces
 a functor from $\widehat{\Cat{\infty}}$-valued sheaves to
$\widehat{\mathcal{S}}$-valued sheaves. 
\end{lemma}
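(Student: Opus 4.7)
The plan is to use the fact that, by \cite[Proposition 5.5.2.9]{HT}, the functor $(-)^{\simeq} : \widehat{\Cat{\infty}} \to \widehat{\mathcal{S}}$ is a right adjoint, so it preserves all small limits. Sheafhood is by its nature a limit-preservation condition, so this should immediately propagate the sheaf condition from an $\widehat{\Cat{\infty}}$-valued sheaf to its composition with $(-)^{\simeq}$.

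More precisely, I would first recall from \cite[Definition 6.2.2.6]{HT} that a functor $F : (\CAlg^{\mathcal{G}})^{op} \to \mathcal{C}$ with $\mathcal{C}$ admitting limits is a sheaf if and only if, for every object $R \in \CAlg^{\mathcal{G}}$ and every covering sieve $\mathcal{C}_{/R}^{(0)} \subset \CAlg^{\mathcal{G}}_{/R}$, the canonical morphism
\[
 F(R) \longrightarrow \lim_{R \to R' \in \mathcal{C}_{/R}^{(0)}} F(R')
\]
is an equivalence in $\mathcal{C}$. Given a $\widehat{\Cat{\infty}}$-valued sheaf $F : (\CAlg^{\mathcal{G}})^{op} \to \widehat{\Cat{\infty}}$, I would form the composition $F^{\simeq} := (-)^{\simeq} \circ F$.

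Since $(-)^{\simeq}$ is a right adjoint by the adjunction (\ref{shf}), it preserves the above limits. Applying it to the sheaf equivalence for $F$ yields an equivalence
\[
 F^{\simeq}(R) \simeq (F(R))^{\simeq} \simeq \Bigl( \lim_{R'} F(R') \Bigr)^{\simeq} \simeq \lim_{R'} (F(R'))^{\simeq} = \lim_{R'} F^{\simeq}(R'),
\]
so $F^{\simeq}$ satisfies the sheaf condition with values in $\widehat{\mathcal{S}}$. This construction is clearly functorial in $F$: postcomposition with $(-)^{\simeq}$ induces a functor $\Fun((\CAlg^{\mathcal{G}})^{op}, \widehat{\Cat{\infty}}) \to \Fun((\CAlg^{\mathcal{G}})^{op}, \widehat{\mathcal{S}})$ which restricts to a functor $\Shv_{\widehat{\Cat{\infty}}}(\CAlg^{\mathcal{G}}) \to \Shv_{\widehat{\mathcal{S}}}(\CAlg^{\mathcal{G}})$, as desired.

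The only subtlety is simply ensuring that the limits involved in the sheaf condition are small (so the right adjoint does preserve them); this is where the universe-enlargement set up in Definition~\ref{big} is used, since after enlarging to $\mathbb{V}$ the relevant diagrams are $\mathbb{V}$-small while $\widehat{\Cat{\infty}}$ admits $\mathbb{V}$-small limits. No other essential difficulty arises, so the statement is a formal consequence of the adjunction together with the descent characterization of sheaves.
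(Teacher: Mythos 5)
Your proposal is correct and follows essentially the same route as the paper: $(-)^{\simeq}$ is a right adjoint by the adjunction (\ref{shf}), hence preserves the limits expressing the sheaf condition, so postcomposition carries $\widehat{\Cat{\infty}}$-valued sheaves to $\widehat{\mathcal{S}}$-valued sheaves. The only cosmetic difference is that you verify descent via the covering-sieve limit of \cite[Definition 6.2.2.6]{HT}, while the paper invokes the simplicial (\v{C}ech) diagrams of Proposition~\ref{57}; both are limit conditions preserved by the right adjoint.
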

\begin{proof}
Since $(-)^{\simeq}$ is a right adjoint, it preserves limits. Especially, it preserves the limits of simplicial diagrams in Proposition~\ref{57}. Therefore,
$(-)^{\simeq}$ sends $\widehat{\Cat{\infty}}$-valued sheaves to
$\widehat{\mathcal{S}}$-valued sheaves. 
\end{proof}

\subsection{Comparison between $B(GL_n(R))$ to $(\Mod_R^{nfree})^{\simeq}$}
Recall that the notion of classifying object from Definition~\ref{B}. 
We denote by $B^{\mathcal{G}}$ the classifying functor on
 $\Shv_{\widehat{\mathcal{S}}}(\CAlg^{\mathcal{G}})$. 
Acording to Remark~\ref{juyo}, we have $GL_n$ in $\Shv(\CAlg^{\mathcal{G}})$. 
\begin{defn}\label{frpr}
\begin{enumerate}[(i)]
\item We use the notation $B(GL_n(R))$ for the classifying space of the
      value $GL_n(R)$
 in $\mathcal{S}$, and $(B^{\mathcal{G}}GL_n)(R)$ for the value at $R$ of the
 classifying sheaf of $GL_n$ in $\Shv(\CAlg^{\mathcal{G}})$.  
\item Let $(\Mod_{(-)}^{nfree})^{\simeq} : (\CAlg^{cn})^{op} \to \widehat{\mathcal{S}}$ be a functor which sends $R$ to the
 $\infty$-groupoid $(\Mod_R^{nfree})^{\simeq}$ of free
 $R$-modules of rank $n$. 
\item Let $B(GL_n(-)): (\CAlg^{cn})^{op} \to \widehat{\mathcal{S}}$ be a functor given by $R \mapsto B(GL_n(R))$. 
\end{enumerate}
\end{defn}
Note that $GL_n(R) \simeq Aut_R(R^{\oplus n})$, and
$B(GL_n(R))$ is not equal to $(B^{\mathcal{G}}GL_n)(R)$. 

\begin{rem}
Apparently, the classifying sheaf $B^{\mathcal{G}}GL_n$ of $GL_n$ in
 $\Shv(\CAlg^{\mathcal{G}})$ in Definition~\ref{frpr} depends the
 Grothendieck topology on $\CAlg^{\mathcal{G}}$, but consequently, we
 show that it is equivalent to the flat sheaf $(nProj)$ in
 Proposition~\ref{bgl} below, $B^{\mathcal{G}}GL_n$ in Zariski topology is equivalent
 to that in Nisnevich topology.  
\end{rem}

Now, we characterize a functor in Definition~\ref{frpr}(ii) with a
functor in Definition~\ref{frpr}(iii). 

\begin{lemma}\label{aut}
Let us regard $(\Mod_{R}^{nfree})^{\simeq}$ as a simplicial set, and take
 a vertex $x \in (\Mod_{R}^{nfree})^{\simeq}$. 
\begin{enumerate}[(i)]
\item We have a morphism from $Aut_R(x)$ to
      $(\Mod_{R}^{nfree})^{\simeq}$ which is compatible with the face
      maps and the degeneracy maps, i.e., a morphism of simplicial sets. 
\item Under the morphism in (i), we have a morphism $B(Aut_R(x)) \to
      (\Mod_R^{nfree})^{\simeq}$ of simplicial sets.
\end{enumerate}
\end{lemma}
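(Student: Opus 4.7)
My plan is to extract both maps from the identification of $Aut_R(x)$ with the based loop space of $K := (\Mod_R^{nfree})^{\simeq}$ at $x$, using the pullback definitions of the mapping and automorphism spaces and, for (ii), the universal property of the bar construction.

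For part (i), I would unfold the definitions. By Definition~\ref{Aut}, $Aut_R(x)$ is a Kan subcomplex of $\Map_{\Mod_R}(x, x)$, and by Definition~\ref{mapp} the latter is the pullback of $(\Mod_R)^{\Delta^1}$ over $\{(x,x)\} \subseteq \Mod_R \times \Mod_R$. An $n$-simplex of $Aut_R(x)$ therefore corresponds to a map $\Delta^n \times \Delta^1 \to \Mod_R$ whose vertex-wise 1-simplices are equivalences and whose restrictions to $\Delta^n \times \{0\}$ and $\Delta^n \times \{1\}$ are constant at $x$. Since $x$ is a free $R$-module of rank $n$ and all 1-simplices in question are equivalences, the map factors through $K = (\Mod_R^{nfree})^{\simeq}$. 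Precomposing with the inclusion $\Delta^n \simeq \Delta^n \times \{0\} \hookrightarrow \Delta^n \times \Delta^1$ yields an $n$-simplex of $K$; functoriality in $[n]$ assembles these into the desired simplicial morphism $Aut_R(x) \to K$.

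For part (ii), I would appeal to Definition~\ref{B} and write $B(Aut_R(x))$ as the colimit of the simplicial diagram
\[
\cdots \rightrightarrows Aut_R(x) \times Aut_R(x) \rightrightarrows Aut_R(x) \rightrightarrows \ast,
\]
with face maps given by composition of automorphisms and degeneracies by insertion of identities. By the universal property of colimits, to produce a morphism $B(Aut_R(x)) \to K$ it suffices to give a compatible family of maps $Aut_R(x)^{\times n} \to K$. Define this family by sending $(\phi_1, \ldots, \phi_n)$ to the $n$-simplex of $K$ whose successive edges are $\phi_1, \ldots, \phi_n$, extending the map from (i); compatibility with face maps (composition or omission of a vertex) and degeneracies (insertion of $\mathrm{id}_x$, which corresponds to the degenerate simplex $s_0(x)$ in $K$) follows from the fact that composition in $\Mod_R$ encodes the simplicial structure of $K$, together with the $\infty$-monoid structure on $\End_R(x)$ restricted to its invertible locus $Aut_R(x)$.

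The main obstacle is the simplicial coherence in (ii): verifying that the $n$-simplex attached to $(\phi_1,\ldots,\phi_n)$ is functorial in $[n]$ on the nose, not merely up to homotopy. To avoid a direct calculation with simplicial identities, I would invoke the equivalence $B : \mathrm{n}Mon(\mathcal{S}) \rightleftarrows \mathrm{n}Gp(\mathcal{S}) : \Omega$ of Lemma~\ref{yy}: under the identification $Aut_R(x) \simeq \Omega_x K$ (which is what (i) records), the counit of this adjunction supplies the desired map $B(Aut_R(x)) = B\Omega_x K \to K$, landing in the path-component of $x$ and thereby witnessing the classical fact that a connected pointed space is the classifying space of its loop space.
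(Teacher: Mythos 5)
Your part (i) is essentially the paper's own argument: unwinding the pullbacks of Definition~\ref{mapp} and Definition~\ref{Aut}, restricting a simplex of $Aut_R(x)$ along $\Delta^n\times\{0\}\hookrightarrow \Delta^n\times\Delta^1$, and noting that everything factors through the maximal Kan complex $K=(\Mod_R^{nfree})^{\simeq}$ because all edges involved are equivalences. Note, though, that the simplicial map this yields is the \emph{constant} map at $x$ (the pullback defining $\Map$ forces the restriction to either end of $\Delta^1$ to be degenerate at $x$); this is harmless for the literal statement of (i), but it undermines the pivot you make in (ii).

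In (ii) you rightly abandon the hands-on assignment $(\phi_1,\dots,\phi_n)\mapsto$ ``the $n$-simplex of $K$ with successive edges $\phi_i$'' -- in a Kan complex there is no canonical such simplex, which is exactly the coherence problem you flag -- and instead invoke the counit $B\Omega_x K\to K$ ``under the identification $Aut_R(x)\simeq \Omega_x K$, which is what (i) records.'' This is the genuine gap: (i) records no such identification (the map it produces is constant). To run the counit argument you need a map of $\infty$-monoids $Aut_R(x)\to \Omega_x K$, where $Aut_R(x)$ carries the composition structure coming from $\End_R(x)$ and $\Omega_x K$ the loop structure; producing this is equivalent to producing exactly the coherent cocone over the bar diagram of Definition~\ref{B} that the hands-on approach failed to give, so the counit relocates the coherence problem rather than dissolving it. The fact that $Aut_R(x)\simeq\Omega_x(\mathcal{C}^{\simeq})$ compatibly with the monoid structures is true and standard, but it is of the same order of difficulty as the lemma itself and must be argued, not asserted. (Also, the adjunction you need is $B: nMon(\mathcal{S})\rightleftarrows \mathcal{S}_{\ast}:\Omega$, stated in the text just before Lemma~\ref{yy}; Lemma~\ref{yy} itself is the group-completion adjunction $\Omega B\dashv i$ between $nMon(\mathcal{S})$ and $nGp(\mathcal{S})$ and does not supply the counit $B\Omega_x K\to K$.) By contrast, the paper's proof never passes through the loop-space identification: it assembles level-wise maps $Aut_R(x)^n\to(\Mod_R^{nfree})^{\simeq}$ over the simplicial diagram defining $B$. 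To repair your proposal, either prove the monoid-level equivalence $Aut_R(x)\simeq\Omega_x K$ before applying the counit, or exhibit the coherent simplicial data mapping the bar construction into $K$ directly.
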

\begin{proof}
If we have $x = y $ in the setting in Definition~\ref{mapp}, we have a morphism
 $\Map(x,\, x) \to X$ of simplicial set by restricting the projection
 $(s,t)$ on the first or the second factors. Thus, we have a morphism from $\Map(x, x)$ to $X$ which is
 compatible with the face maps and the degeneracy maps. 
By composing the projection from $n$-times product $\Map(x, x)^n$ to
 $\Map(x, x)$ with $\Map(x, x) \to X$, we have $\Map(x, x)^n \to X$ which is
 also compatible with the face maps and the degeneracy maps. 

For (i), by applying the above argument to the simplicial set
 $\Mod_{R}^{nfree}$ and a vertex $x \in
 \Mod_{R}^{nfree}$, we obtain the morphisms $\Map_{\Mod_R}(x, \,
 x) \to \Mod_{R}^{nfree}$ and $\Map_{\Mod_R}(x, \,
 x)^n \to \Mod_{R}^{nfree}$ which is compatible with the face
 and degeneracy maps, i.e., morphisms of simplicial sets. 

Recall that $Aut_R(x)$ defined in Definition~\ref{Aut} is a
 simplicial subset of the mappling space $\Map_{\Mod_R}(x, x)$. 
Then, the induced morphisms 
$Aut_R(x) \to \Map_{\Mod_R}(x, \,
 x) \to \Mod_{R}^{nfree}$ factors through the maximal Kan complex $Aut_R(x) \to
 (\Mod_{R}^{nfree})^{\simeq}$. From this, we have the morphism $Aut_R(x)^n \to
 (\Mod_{R}^{nfree})^{\simeq}$ which is compatible with the face
 and degeneracy maps.

For (ii), by the above argument, we have the morphism $Aut_R(x)^n \to
 (\Mod_{R}^{nfree})^{\simeq}$ which is compatible with the face
 and degeneracy maps, i.e., morphism of simplicial set, from the each factor of
 $B(Aut_R(x))$ to $(\Mod_{R}^{nfree})^{\simeq}$. 
Therefore, it induces a morphism $B(Aut_R(x)) \to (\Mod_{R}^{nfree})^{\simeq}$ of simplicial sets. 
\end{proof}

\begin{prop}
A functor in Definition~\ref{frpr}(ii) is characterized by a
functor in Definition~\ref{frpr}(iii). 

Let $(\Mod_{(-)}^{nfree})^{\simeq} : (\CAlg^{cn})^{op} \to \widehat{\mathcal{S}}$ be a functor which sends $R$ to the
 $\infty$-groupoid $(\Mod_R^{nfree})^{\simeq}$ of free
 $R$-modules of rank $n$. 
 Let $B(GL_n(-)): (\CAlg^{cn})^{op} \to \widehat{\mathcal{S}}$ be a functor given by $R \mapsto B(GL_n(R))$. 
Then, as a functor, $B(GL_n(-)) \simeq (\Mod_{(-)}^{nfree})^{\simeq}$. 
\end{prop}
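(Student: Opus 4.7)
The plan is to produce a natural equivalence pointwise and then upgrade it to a functorial one using the equivalence between pointed connected spaces and $\infty$-groups established in the discussion preceding Lemma~\ref{yy}.

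First, I fix the distinguished vertex $R^{\oplus n} \in (\Mod_R^{nfree})^{\simeq}$. By Lemma~\ref{aut}(ii) applied to this vertex, I obtain a morphism of simplicial sets
\[
 \varphi_R : B(Aut_R(R^{\oplus n})) \longrightarrow (\Mod_R^{nfree})^{\simeq}.
\]
Combining with Proposition~\ref{GLgp}, which gives the equivalence $GL_n(R) \simeq Aut_R(R^{\oplus n})$ of $\infty$-groups, this produces a candidate map $B(GL_n(R)) \to (\Mod_R^{nfree})^{\simeq}$.

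Next I verify that $\varphi_R$ is an equivalence in $\widehat{\mathcal{S}}$ by two checks. First, $(\Mod_R^{nfree})^{\simeq}$ is connected: every free $R$-module of rank $n$ is equivalent to $R^{\oplus n}$, so every vertex is connected by a $1$-simplex to the basepoint, and $B(Aut_R(R^{\oplus n}))$ is connected by construction. Second, the loop space of $(\Mod_R^{nfree})^{\simeq}$ at $R^{\oplus n}$ is by definition the mapping space $\Map_{(\Mod_R^{nfree})^{\simeq}}(R^{\oplus n}, R^{\oplus n})$, which is the space of auto-equivalences $Aut_R(R^{\oplus n})$; similarly $\Omega B(Aut_R(R^{\oplus n})) \simeq Aut_R(R^{\oplus n})$ by the unit of the adjunction $B \dashv \Omega$ between $nMon(\mathcal{S})$ and $\mathcal{S}_{*\geq 1}$ (recalled before Lemma~\ref{yy}), since $Aut_R(R^{\oplus n})$ is already group-like. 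Thus $\varphi_R$ is a map between pointed connected spaces inducing an equivalence on loop spaces, and hence is itself an equivalence by the aforementioned adjunction-equivalence.

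Finally, I need functoriality in $R$. A map $R \to R'$ of connective $\E$-rings induces a symmetric monoidal base-change functor $\Mod_R \to \Mod_{R'}$ sending $R^{\oplus n}$ to $(R')^{\oplus n}$, and compatibly sending $Aut_R(R^{\oplus n}) \simeq GL_n(R)$ to $Aut_{R'}((R')^{\oplus n}) \simeq GL_n(R')$ (the compatibility is part of Proposition~\ref{GLgp}). Because the simplicial map $\varphi_R$ was constructed from the general recipe of Lemma~\ref{aut}, which is manifestly natural in the ambient $\infty$-category, the maps $\varphi_R$ assemble to a natural transformation of functors $(\CAlg^{cn})^{op} \to \widehat{\mathcal{S}}$, yielding the required equivalence $B(GL_n(-)) \simeq (\Mod_{(-)}^{nfree})^{\simeq}$.

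The main obstacle is the second step: carefully identifying $\Omega (\Mod_R^{nfree})^{\simeq}$ at the basepoint $R^{\oplus n}$ with $Aut_R(R^{\oplus n})$ in a way compatible with the map $\varphi_R$ of Lemma~\ref{aut}, so that we may genuinely invoke the $B \dashv \Omega$ equivalence on pointed connected spaces. Once this identification is pinned down, connectivity and the loop-space computation make the conclusion formal.
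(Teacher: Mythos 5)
Your proof is correct, but it takes a different route from the paper at the key step. The paper composes the equivalence $B(GL_n(R))\simeq B(Aut_R(R^{\oplus n}))$ from Proposition~\ref{GLgp} with the map of Lemma~\ref{aut}, and then argues categorically: by construction $B(GL_n(R))$ is identified with the full $\infty$-subgroupoid of $(\Mod_R^{nfree})^{\simeq}$ spanned by the single object $R^{\oplus n}$, so the map is fully faithful essentially by definition, and essential surjectivity reduces to the observation that $(\Mod_R^{nfree})^{\simeq}$ is connected because every free module of rank $n$ is equivalent to $R^{\oplus n}$. You instead use the homotopy-theoretic recognition principle: both sides are pointed and connected, the based loop space of the maximal $\infty$-groupoid at $R^{\oplus n}$ is the space of auto-equivalences $Aut_R(R^{\oplus n})$, and $\Omega B(Aut_R(R^{\oplus n}))\simeq Aut_R(R^{\oplus n})$ since this monoid is grouplike, so $\varphi_R$ is an equivalence by the $B\dashv\Omega$ adjunction between $\infty$-groups and pointed connected spaces recalled before Lemma~\ref{yy}. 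The two arguments rest on the same two inputs (connectivity of $(\Mod_R^{nfree})^{\simeq}$ and $GL_n(R)\simeq Aut_R(R^{\oplus n})$), but the paper's identification makes fully faithfulness tautological, whereas your version trades that for the loop-space compatibility you correctly flag as the delicate point: one must check that $\varphi_R$, which by the construction in Lemma~\ref{aut} sends the $1$-simplices of the bar construction to the corresponding automorphism $1$-simplices of $(\Mod_R^{nfree})^{\simeq}$, induces the canonical identification on loops. At the level of rigor of the paper's own proof this is acceptable, and your treatment of functoriality in $R$ (via base change and the naturality asserted in Proposition~\ref{GLgp}) is no less complete than the paper's, which likewise only asserts that the assignment is functorial.
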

\begin{proof}
By taking $x = y = R^n$ in Lemma~\ref{aut}, we have a morphism from $B(GL_n(R))
 \to (\Mod_R^{nfree})^{\simeq}$ for each $R \in (\CAlg^{cn})^{op}$. 

By composing a morphism $B(GL_n(R)) \simeq BAut_{\mathbb{S}}(R^n)$
 obtained by Proposition~\ref{GLgp} with $B(Aut_{\mathbb{S}}(R))
 \to (\Mod_R^{nfree})^{\simeq}$ obtained by Lemma~\ref{aut}, we have a morphism $B(GL_n(R))
 \to (\Mod_R^{nfree})^{\simeq}$. 
It suffices to show that a morphism $B(GL_n(R))
 \to (\Mod_R^{nfree})^{\simeq}$ is an equivalence and functorial with
 respect to $R$. 

By the construction, we identify $B(GL_n(R))$ 
with a full $\infty$-subgroupoid of $(\Mod_R^{nfree})^{\simeq}$ such
 that the object is only $R^{\oplus n}$ and the class of morphisms is
 identified with $GL_n(R) \simeq Aut_{\mathbb{S}}(R)$ by
 Proposition~\ref{GLgp}, which is fully faithfull functorial assignment with respect to $R$.

We show that it is
 essentially surjective. It suffices to show that
 $(\Mod_R^{nfree})^{\simeq}$ is connected as a simplicial set. This is
 obvious since, for $M \in \Mod_R^{nfree}$, we have $M \simeq
 R^{\oplus n}$. 
\end{proof}

\subsection{A natural transformation from $B(GL_n(-))$ to $(B^{\mathcal{G}}GL_n)(-)$}
We construct a natural
transformation from $B(GL_n(-))$ to $(B^{\mathcal{G}}GL_n)(-)$ by using the
comparison between the sheaf of projective modules of rank $n$
and the sheafification of the functor of free modules of rank $n$.

For each $R$, we have the following colimit in $\widehat{\mathcal{S}}$:
\[
\xymatrix@1{
 GL_n(R) \ar@<-0.5ex>[r] \ar@<0.5ex>[r]  & \ast \ar[r] & B(GL_n(R)) .
} 
\]
Note that the functor corresponds to this cofiber is not a sheaf since
the second condition of \cite[Definition 2.6.14 (1)]{DAG8}
fails. According to the notation in Definition~\ref{Shff}, we
write $\widetilde{B(GL_n(-))}$ for the sheafification of $B(GL_n(-))$. 
\begin{lemma}
We have the following cofiber sequence
\[
\xymatrix@1{
 GL_n \ar@<-0.5ex>[r] \ar@<0.5ex>[r]  & \ast \ar[r] & \widetilde{B(GL_n(-))}
} 
\]
in $\Shv_{\widehat{\mathcal{S}}}(\CAlg^{\mathcal{G}})$. 
\end{lemma}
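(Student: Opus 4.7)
The plan is to deduce the sheafified cofiber sequence from the objectwise one by invoking the fact that the sheafification functor, being a left adjoint, commutes with all (small) colimits.

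First I would unpack what the displayed diagram $GL_n(R) \rightrightarrows \ast \to B(GL_n(R))$ actually encodes. By Definition~\ref{B}, $B(GL_n(R))$ is the colimit of the full simplicial bar diagram
\[
  \cdots \;\rightrightarrows\; GL_n(R)\times GL_n(R) \;\rightrightarrows\; GL_n(R) \;\rightrightarrows\; \ast
\]
in $\widehat{\mathcal{S}}$, with face maps the multiplication and degeneracies the unit. This is functorial in $R$, so levelwise it assembles into a simplicial object $X_\bullet$ in the presheaf category $\Fun((\CAlg^{\mathcal{G}})^{op},\widehat{\mathcal{S}})$ whose colimit (computed pointwise in $\widehat{\mathcal{S}}$) is the presheaf $B(GL_n(-))$.

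Next I would apply the sheafification functor $\widetilde{(-)}^{\mathcal{G}}$ from Definition~\ref{Shff}. Since it is the left adjoint to the fully faithful inclusion $\Shv_{\widehat{\mathcal{S}}}(\CAlg^{\mathcal{G}}) \hookrightarrow \Fun((\CAlg^{\mathcal{G}})^{op},\widehat{\mathcal{S}})$, it preserves all colimits. Therefore
\[
  \widetilde{B(GL_n(-))}^{\mathcal{G}} \;\simeq\; \mathrm{colim}\bigl(\widetilde{X_\bullet}^{\mathcal{G}}\bigr)
\]
in $\Shv_{\widehat{\mathcal{S}}}(\CAlg^{\mathcal{G}})$. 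Now I would observe that each term of $X_\bullet$ is already a sheaf: the terminal object $\ast$ is a sheaf trivially, $GL_n$ is a sheaf by Remark~\ref{juyo}, and products of sheaves are sheaves (the inclusion, being a right adjoint, preserves finite products), so $GL_n^{\times k}$ is a sheaf for every $k$. Consequently $\widetilde{X_\bullet}^{\mathcal{G}} \simeq X_\bullet$ termwise, and the colimit above recovers exactly the bar construction applied to the sheaf $GL_n$, which by Definition~\ref{B} (now interpreted in the $\infty$-topos $\Shv_{\widehat{\mathcal{S}}}(\CAlg^{\mathcal{G}})$) gives the desired cofiber sequence
\[
  GL_n \;\rightrightarrows\; \ast \;\to\; \widetilde{B(GL_n(-))}^{\mathcal{G}}.
\]

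The main subtlety I expect is the one flagged in the paragraph preceding the lemma: the presheaf $B(GL_n(-))$ itself is \emph{not} a sheaf (the flat-local condition fails because classifying spaces do not satisfy descent before sheafification). So one must be careful not to confuse the pointwise colimit, which exists already in the presheaf category, with the colimit computed in the sheaf category, and the whole argument is precisely that these differ by the sheafification, which is exactly the content of the displayed diagram. Verifying that the simplicial diagram used in Definition~\ref{B} in the $\infty$-topos of sheaves coincides with the sheafification of the pointwise one is the key compatibility, and it follows formally from $\widetilde{(-)}^{\mathcal{G}}$ being a left-exact left adjoint (Lemma~\ref{sg} and the reference to \cite[Definition 5.3.2.1]{HT}).
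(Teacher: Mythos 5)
Your argument is correct and is essentially the paper's own proof, just spelled out in more detail: the paper likewise observes that $GL_n$ is already a (Zariski/Nisnevich) sheaf because it is corepresentable, and that sheafification, being a left adjoint, commutes with the colimit defining the cofiber sequence. Your extra care about the bar-diagram terms being sheaves and the pointwise versus sheaf-level colimit is a faithful elaboration of that same two-step argument.
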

\begin{proof}
Since $GL_n$ is corepresented by an $\E$-ring, it is flat sheaf
 by~\cite[VII, Theorem 5.15]{DAG7}, so that it is a Zariski
 (resp. Nisnevich) sheaf. Since sheafification is left
 adjoint, it commutes with the cofiber sequence. 
\end{proof}

For each $A \to R$ in $\CAlg^{cn}$, the natural inclusions induce the
commutative diagram
\[
\xymatrix@1{
 \Mod_A^{nfree} \ar[r] \ar[d] & \ar[d] \Mod_A^{nproj} \\
\Mod_R^{nfree} \ar[r] & \Mod_R^{nproj},
} 
\]
in $\widehat{\Cat{\infty}}$, so that we have a natural transformation
\begin{equation}\label{bpr}
\Mod_{(-)}^{nfree} \to \Mod_{(-)}^{nproj}. 
\end{equation}
By the adjointness of
sheafification, we have a morphism $\widetilde{B(GL_n(-))} \to
(nProj)^{\simeq}$ of $\widehat{\mathcal{S}}$-valued sheaves.

\subsection{An equivalence $B^{\mathcal{G}}GL_n \simeq (nProj)^{\simeq}$}
\begin{prop}\label{bgl}
The  morphism (\ref{bpr}) in the previous subsection gives an equivalence $B^{\mathcal{G}}GL_n \simeq (nProj)^{\simeq}$ in $\Shv_{\widehat{\mathcal{S}}}(\CAlg^{\mathcal{G}})$. 
\end{prop}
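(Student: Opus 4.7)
The plan is to reduce the claim to a homotopy-sheaf comparison after identifying $B^{\mathcal{G}}GL_n$ with a sheafification.

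First, I would establish the identification $B^{\mathcal{G}}GL_n \simeq \widetilde{B(GL_n(-))}$. By Definition~\ref{B}, $B^{\mathcal{G}}GL_n$ is the colimit in $\Shv_{\widehat{\mathcal{S}}}(\CAlg^{\mathcal{G}})$ of the bar simplicial diagram of $GL_n$; since sheafification preserves colimits (being left adjoint to the inclusion) and $GL_n$ is already a sheaf by Remark~\ref{juyo}, this colimit agrees with the sheafification of the pointwise bar construction. Combined with the preceding proposition, which identifies $B(GL_n(-))$ with $(\Mod_{(-)}^{nfree})^{\simeq}$ as presheaves, and the fact that $(nProj)^{\simeq}$ is already a sheaf (Corollary following Lemma~\ref{53}), the claim reduces to showing that the morphism of sheaves induced by the inclusion (\ref{bpr}), $\widetilde{(\Mod_{(-)}^{nfree})^{\simeq}} \to (nProj)^{\simeq}$, is an equivalence.

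Second, I would verify this by comparing homotopy sheaves. On $\pi_0$, the source is the sheafification of the presheaf $R \mapsto \pi_0(\Mod_R^{nfree})^{\simeq}$, which is pointwise terminal since any free rank $n$ module is equivalent to $R^n$; the target is the sheafification of $R \mapsto \pi_0(\Mod_R^{nproj})^{\simeq}$, which is likewise terminal because, by Definition~\ref{p0}(iii), any projective rank $n$ module becomes free after inverting a finite collection of elements generating the unit ideal. For higher homotopy sheaves, I would choose the local basepoint $R^n$, which lies in both source and target; since $\Mod_R^{nfree}$ is a full subcategory of $\Mod_R^{nproj}$, the based loop spaces at $R^n$ coincide pointwise and equal $Aut_R(R^n) \simeq GL_n(R)$ by Proposition~\ref{GLgp}, and since $GL_n$ is already a sheaf the comparison passes to the sheaf level without further sheafification.

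The main obstacle will be formulating the homotopy-sheaf comparison rigorously, since both sheaves are only connected in the sheaf sense (not pointwise) and so lack a global basepoint over arbitrary $R$. The cleanest device is the $\infty$-topos criterion that a morphism is an equivalence if and only if it is an effective epimorphism whose diagonal is an equivalence: the effective epimorphism property is exactly the $\pi_0$ step, while the diagonal equivalence follows from the full faithfulness of free modules inside projective modules together with the loop-space identification above. Alternatively, one may work Zariski-locally: on a cover trivializing a given projective module the comparison reduces to the tautological assertion $B(GL_n(R)) \simeq (\Mod_R^{nfree})^{\simeq}$, and descent for both sheaves then upgrades the local equivalence to a global one.
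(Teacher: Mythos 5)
Your proposal is correct in substance, but it takes a genuinely different route from the paper. The paper also begins by identifying $B^{\mathcal{G}}GL_n$ with the sheafification $\widetilde{B(GL_n(-))}$ and by using the comparison $B(GL_n(R)) \simeq (\Mod_R^{nfree})^{\simeq}$; from there, however, it argues by hand: using the explicit colimit formula for sheafification (Remark~\ref{6229}) it constructs, for each object $Q$ of $(nProj)^{\simeq}(R)$, a section $h_{R,Q}$ by trivializing $Q$ on a Zariski cover, checks that both composites with $f_R$ are homotopic to the identity, and concludes that $f_R$ is a Kan fibration with contractible fibers, hence a weak equivalence. You instead reduce to the topos-theoretic criterion that a morphism of sheaves is an equivalence if and only if it is an effective epimorphism and a monomorphism. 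In your setup the monomorphism part is clean: pointwise the map $(\Mod_R^{nfree})^{\simeq} \to (\Mod_R^{nproj})^{\simeq}$ is a fully faithful inclusion of $\infty$-groupoids, hence an inclusion of connected components, i.e.\ $(-1)$-truncated, and left exactness of sheafification preserves this (the loop-space identification with $GL_n$ is not even needed for this step); the effective-epi part is exactly Definition~\ref{p0}(iii), since every projective module of rank $n$ becomes free on a Zariski cover, so every section of $(nProj)^{\simeq}$ lifts locally. What your approach buys is rigor and robustness: it avoids the paper's delicate manipulation of the sheafification colimit and the unargued assertion that $f_R$ is a Kan fibration, and, because you only need $\pi_0$-surjectivity plus the diagonal, it sidesteps the hypercompleteness issue that a bare homotopy-sheaf comparison (your second paragraph taken alone) would raise. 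What the paper's approach buys is an explicit description of local inverses, which is implicitly reused later (e.g.\ in Lemma~\ref{zcon2}, where sections of $(B^{\mathcal{G}}GL_n)(R)$ are represented by rank-$n$ projectives). One caution: your alternative phrasing that a map of sheaves which is locally an equivalence is globally one needs careful formulation (it is best justified by the same epi/mono criterion), so the effective-epi-plus-diagonal argument should be the actual proof rather than the descent remark.
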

\begin{proof}
From adjointness of a morphism $f_{(-)} : \widetilde{B(GL_n(-))} \to
(nProj)^{\simeq}$ of $\widehat{\mathcal{S}}$-valued sheaves, 
we have the following homotopy commutative diagram of $\widehat{\mathcal{S}}$-valued presheaves:
 \[
     \xymatrix@1{
& \widetilde{B(GL_n(-))} \ar[dr]^f & \\
B(GL_n(-)) \ar[ur]^L \ar[rr]^i  && (nProj)^{\simeq} ,
}
\]
where $L$ is the morphism associated to the sheafification and $i$ is
 induced from the inclusion. 

Let us denote the
 limits of the following simplicial diagrams by $B(GL_n(A))_R$ :
\[
\xymatrix@1{
 B(GL_n(A)) \ar@<-0.5ex>[r] \ar@<0.5ex>[r]  & B(GL_n(A
 \otimes_R A)) \ar@<-0.6ex>[r] \ar@<-0.1ex>[r] \ar@<0.5ex>[r]
 & \cdots ,
} 
\]
where $R \to A$ is a faithfully flat morphism. 
By Remark~\ref{6229}, the sheafification $\widetilde{B(GL_n(-))}$ is
 described by the term of $B(GL_n(-))$, i.e., it is the colimit of those
 $B(GL_n(A))_R$ taken
 over the every covering sieve $R \to A$ .

For $f_R:
 \widetilde{B(GL_n(R))} \to (nProj)^{\simeq}(R)$ and an object $Q$ in
 $(nProj)^{\simeq}(R)$, let $(Q)^{\simeq}$ denote the full
 $\infty$-subgroupoid spanned by $Q$. We construct $h_{R,Q} : (Q)^{\simeq} \to \widetilde{B(GL_n(R))}$ as follows.

We take a Zariski local $A$ of $R$ such that an object $Q \in (nProj)^{\simeq}(R)$
 is trivialized on $A$.   
Then, we naturally identified the $\infty$-groupoid spanned by $Q \otimes A$ with $B(GL_n(A))$, so that we have an object in $\widetilde{B(GL_n(R))}$. This is the
 assignment under the morphisms of $\infty$-groupoids.  
By $f \circ L \simeq i$ in the above diagram, we have $f_R \circ h_{R,Q}$
 is homotopic to identity. 

Conversely, for an object $P$ in $\widetilde{B(GL_n(R))}$, we take its
 value on the covering sieve $R \to R$ under the morphisms associated to
 the colimit, and denote by $P'$. Then, by
 an extension of the coefficients of $P'$ gives an object in
 $\widetilde{B(GL_n(R))}$, which is equivalent to $P$ by the
 construction. This shows that $h_{R, f(P)} \circ f_R$ is homotopic to
 identity. 

Since $f_R$ is obviously a Kan fibration and, by the above arguments,
 any fiber of $f_R$ is contractible. By \cite[Lemma 4.1.3.2, Corollary
 4.1.2.6]{HT}, it is a weak homotopy equivalence.  
\end{proof}

\subsection{Zariski connected $\E$-rings}
Next, we consider the condition on a connective $\E$-ring $R$ such that
any finitely generated projective $R$-module has finite constant rank. 

\begin{defn}
\begin{enumerate}[(i)]
\item We say that an $\E$-ring is Zariski non-connected if there exists those
 objects $f, g \in \pi_0 R$ such that $R[f^{-1}] \otimes_R R[g^{-1}]
 \simeq 0$ and $R \simeq R[f^{-1}] \times R[g^{-1}]$. 
\item We say that an $\E$-ring is Zariski connected if it is not Zariski non-connected. 
\end{enumerate}
\end{defn}
\begin{lemma}\label{zcon}
The following conditions are equivalent:
\begin{enumerate}[(i)]
\item $R$ be an $\E$-ring such that $\pi_0 R$ has no non-trivial
      idempotent element. 
\item $R$ is Zariski connected.
\item Any $P \in \Mod_R^{proj}$ has finite constant rank. 
\end{enumerate}
\end{lemma}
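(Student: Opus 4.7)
\emph{Proof plan.} The plan is to establish the cycle (i) $\Leftrightarrow$ (ii), then (ii) $\Rightarrow$ (iii), and finally (iii) $\Rightarrow$ (ii) by contraposition.

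For (i) $\Leftrightarrow$ (ii): if $R$ is Zariski non-connected with witnesses $f, g \in \pi_0 R$, passing to $\pi_0$ yields a non-trivial product decomposition $\pi_0 R \cong (\pi_0 R)[f^{-1}] \times (\pi_0 R)[g^{-1}]$, hence a non-trivial idempotent. Conversely, given a non-trivial idempotent $e \in \pi_0 R$, set $f := e$ and $g := 1 - e$. Using that $\pi_\ast R[f^{-1}] \cong (\pi_\ast R)[f^{-1}]$ (see \cite[Remark 2.9]{DAG7}) together with the flatness of $R \to R[f^{-1}]$, I would compute
\[
\pi_\ast\bigl(R[e^{-1}] \otimes_R R[(1-e)^{-1}]\bigr) \cong (\pi_\ast R)[e^{-1}, (1-e)^{-1}],
\]
which vanishes because $e(1-e) = 0$ becomes a unit once both $e$ and $1-e$ are inverted; likewise verify on each $\pi_i$ that $R \to R[e^{-1}] \times R[(1-e)^{-1}]$ realises the standard idempotent splitting $\pi_i R \cong e\,\pi_i R \oplus (1-e)\,\pi_i R$ and is therefore an equivalence.

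For (ii) $\Rightarrow$ (iii): take $P \in \Mod_R^{proj}$, so that $\pi_0 P$ is a classically finitely generated projective $\pi_0 R$-module. Since $\pi_0 R$ has no non-trivial idempotent, $\mathrm{Spec}(\pi_0 R)$ is connected, whence the classical rank function of $\pi_0 P$ is constant with some value $n$. By standard commutative algebra, choose $x_1, \dots, x_m \in \pi_0 R$ generating the unit ideal such that each $(\pi_0 P)[x_i^{-1}]$ is free of rank $n$ over $(\pi_0 R)[x_i^{-1}]$. Lifting a basis produces a map $\varphi_i \colon R[x_i^{-1}]^{\oplus n} \to P[x_i^{-1}]$ that is an isomorphism on $\pi_0$; since source and target are both connective and flat (projectivity implies flatness), the flat-local formula $\pi_j M \cong \pi_j R[x_i^{-1}] \otimes_{\pi_0 R[x_i^{-1}]} \pi_0 M$ forces $\varphi_i$ to be an isomorphism on every $\pi_j$, hence an equivalence of $R[x_i^{-1}]$-modules. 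Thus $P[x_i^{-1}]$ is free of rank $n$, so $P$ is projective of constant rank $n$ in the sense of Definition~\ref{p0}.

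For (iii) $\Rightarrow$ (ii), I argue contrapositively: given a non-trivial idempotent $e \in \pi_0 R$, the splitting $R \simeq R[e^{-1}] \times R[(1-e)^{-1}]$ from the first paragraph exhibits $R[e^{-1}]$ as a retract of $R$, hence $R[e^{-1}] \in \Mod_R^{proj}$. Its localisation at $e$ is $R[e^{-1}]$ itself (free of rank $1$), whereas its localisation at $1-e$ is contractible (rank $0$), so $R[e^{-1}]$ admits no constant rank, contradicting (iii). The main obstacle will be the lifting step in (ii) $\Rightarrow$ (iii): verifying that freeness at the level of $\pi_0$ genuinely lifts to freeness of the ambient connective $R[x_i^{-1}]$-module. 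This rests on the structural fact that a map between connective flat modules over a connective $\E$-ring which is an isomorphism on $\pi_0$ is already an equivalence, itself a consequence of projectivity implying flatness and the flat-local formula for homotopy groups.
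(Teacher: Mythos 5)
Your proof is correct and follows essentially the same route as the paper: (i) $\Leftrightarrow$ (ii) via the idempotent splitting of $\pi_\ast R$ and the compatibility of localization and finite products with homotopy groups, (i)/(ii) $\Rightarrow$ (iii) by lifting local freeness of $\pi_0 P$ and using that a morphism of flat connective modules inducing an isomorphism on $\pi_0$ is an equivalence, and (iii) $\Rightarrow$ (i)/(ii) by contraposition. The only difference is a minor refinement on your side: you exhibit the explicit witness $R[e^{-1}]$, a finitely generated projective $R$-module of non-constant rank, where the paper merely asserts that the local ranks $n_i$ fail to be constant.
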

\begin{proof}
For proving (iii) from $(i)$, let $R$ be an $\E$-ring such that $\pi_0 R$ has no non-trivial idempotent, and $P \in \Mod_R^{proj}$. Note that $\pi_0 P$ is a finitely generated projective $\pi_0R$-module~\cite[Remark 7.2.2.20]{HA}. Then, there exists $f_1, \cdots, f_m$ which generate the unit ideal of $\pi_0 R$ such that each $\pi_0 P[f^{-1}_i]$ is free of finite rank $n_i$ over $\pi_0 R[f^{-1}_i]$. For an ordinary commutative ring $\pi_0 R$, in this the condition $(i)$ is equivalent to that $n_i$s are constant, and let us denote it by $n$. 
Since finitely generated free $\pi_0R[f^{-1}_i]$-modules $\pi_0
 P[f^{-1}_i]$ can be lifted by finitely generated $R[f^{-1}_i]$-modules
 $P[f^{-1}_i]$ and a morphism between flat $R$-modules is an equivalence
 if and only if it induces an isomorphism on $\pi_0$~\cite[Lemma
 7.2.2.17]{HA}, locally $P$ has a constant rank. 

Conversely, if $\pi_0
 R$ has a non-trivial idempotent element, we have that the rank $n_i$
 are not constant. In this case, the above argument shows that (iii)
 implies (i).    

We show that (i) implies (ii). Since $\pi_0$ preserves finite products, if $R$ is Zariski non-connected, by passing to $\pi_0$ and applying the ordinary commutative ring theory, $\pi_0 R$ has a non-trivial idempotent. 
Conversely, if $\pi_0 R$ has a non-trivial idempotent $e$, it also a
 non-trivial idempotent of $\pi_{\ast}R$. 
Since the localization of an $\E$-ring with one element commutes with
 $\pi_{\ast}$, we have $\pi_{\ast}(R[e^{-1}]) \cong
 (\pi_{\ast}R)[e^{-1}]$ and $\pi_{\ast}(R[(1-e)^{-1}]) \cong
 (\pi_{\ast}R)[(1-e)^{-1}]$. 
Since $\pi_{\ast}$ commutes with finite products, by taking $\pi_{\ast}$
 of $R[e^{-1}] \times R[(1-e)^{-1}]$, we conclude that $R$ is equivalent
 to $R[e^{-1}] \times R[(1-e)^{-1}]$, so that $R$ is Zariski non-connected. 
Thus, (ii) implies (i).  
\end{proof}

\subsection{Comparison between $B^{\mathcal{G}}GL$ and $\Mod_R^{\infty proj}$}

Now, we will prove the main proposition in this section by applying the following proposition. 
\begin{prop}[\cite{DAG7} Lemma 3.21]\label{321}
Let $\mathcal{C}$ be an $\infty$-topos, $I$ an index set and $\{X_i\}_{i
 \in I}$ a collection of objects in $\mathcal{C}$. For every subset $J
 \subset I$, let $X_J \simeq \coprod_{i \in J} X_i$. Let $C \in
 \mathcal{C}$ be an object such that every covering of $C$ has a finite
 subcovering~\cite[Definition 3.1]{DAG7}. Then, the canonical morphism
\[
 \mathop{\mathrm{colim}}_{I \subset I} \Map_{\mathcal{C}}( C, \, X_J)
 \to \Map_{\mathcal{C}}( C, \, X_I)
\]
induces a homotopy equivalence. Here, the colimit in the left hand side
 is run through the all finite subsets $J \subset I$. 
\end{prop}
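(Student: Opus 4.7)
The plan is to reduce the $\infty$-categorical equivalence of mapping spaces to the $\pi_0$-level statement that every morphism $f : C \to X_I$ factors, essentially uniquely, through $X_J$ for some finite $J \subset I$. Since filtered colimits in $\mathcal{S}$ commute with finite limits, an iteration of the same argument on path objects then upgrades the $\pi_0$ statement to an equivalence on all higher homotopy groups. So the essential content is to analyze a single map $f : C \to X_I$ and show it factors through a finite subcoproduct.

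For that, I would exploit two structural properties of the $\infty$-topos $\mathcal{C}$: universality of colimits and disjointness of coproducts. Given $f : C \to X_I$, define $C_i := C \times_{X_I} X_i$ for each $i \in I$. Universality of colimits gives
\[
 C \simeq C \times_{X_I} X_I \simeq C \times_{X_I} \bigl(\coprod_{i \in I} X_i\bigr) \simeq \coprod_{i \in I} C_i,
\]
while disjointness implies that each inclusion $X_i \hookrightarrow X_I$ is a monomorphism and that $X_i \times_{X_I} X_j$ is initial for $i \neq j$; pulling these back along $f$ shows that the $C_i \to C$ are pairwise disjoint monomorphisms. Consequently the family $\{C_i \to C\}_{i \in I}$ is a covering of $C$. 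By the quasi-compactness hypothesis on $C$, there is a finite subset $J \subset I$ such that $\{C_i \to C\}_{i \in J}$ is already a covering, i.e.\ the canonical morphism $\coprod_{i \in J} C_i \to C$ is an effective epimorphism. But this same morphism factors as the natural inclusion $\coprod_{i \in J} C_i \hookrightarrow \coprod_{i \in I} C_i \simeq C$, which is a monomorphism; being simultaneously a monomorphism and an effective epimorphism, it is an equivalence. Disjointness then forces $C_i \simeq \emptyset$ for $i \notin J$, and $f$ factors through $X_J$ as desired.

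The main obstacle I expect is controlling higher homotopical data rather than just $\pi_0$. The cleanest way to handle this is via the slice equivalence
\[
 \mathcal{C}_{/X_I} \; \simeq \; \prod_{i \in I} \mathcal{C}_{/X_i},
\]
which holds in any $\infty$-topos by universality and disjointness. Under this equivalence, an object $(C \to X_I)$ corresponds uniformly to the family $(C_i \to X_i)_{i \in I}$, and mapping spaces in $\mathcal{C}$ with target $X_I$ decompose along components. The quasi-compactness of $C$ then applies to the full slice data and not only to the underlying morphism, forcing $C_i \simeq \emptyset$ outside a single finite $J$ compatibly with all higher simplices. This gives the equivalence on all homotopy groups at once, which, combined with surjectivity on $\pi_0$ established above, yields the desired homotopy equivalence.
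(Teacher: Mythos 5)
The paper offers no argument for this proposition at all: it is imported verbatim from Lurie's \emph{Spectral Schemes} (DAG VII, Lemma 3.21) and closed with a \qed, so there is no ``paper proof'' to match. Your proposal reconstructs essentially the standard (Lurie-style) argument, and its core is correct: given $f\colon C\to X_I$, universality of colimits gives $C\simeq\coprod_{i\in I}C_i$ with $C_i=C\times_{X_I}X_i$, disjointness makes the $C_i\to C$ pairwise disjoint monomorphisms, hence a covering; quasi-compactness in the sense of \cite[Definition 3.1]{DAG7} yields a finite $J$ with $\coprod_{i\in J}C_i\to C$ an effective epimorphism, which is also a monomorphism and therefore an equivalence, forcing $C_i\simeq\emptyset$ for $i\notin J$ and a factorization of $f$ through $X_J$, unique up to contractible choice because $X_J\to X_I$ is a monomorphism. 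This is exactly the surjectivity-on-$\pi_0$ half of the statement, done correctly.

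The only place you are loose is the passage from the $\pi_0$ statement to the full equivalence. The appeal to ``iterating on path objects'' and to ``quasi-compactness applying to the full slice data'' is vaguer than it needs to be, and the latter phrase is slightly misleading: quasi-compactness is only needed for $\pi_0$-surjectivity, not for any higher-homotopical control. The clean way to finish is the observation you already have in hand: since each $X_J\to X_I$ is a monomorphism (disjointness plus universality), every map $\Map_{\mathcal{C}}(C,X_J)\to\Map_{\mathcal{C}}(C,X_I)$ is $(-1)$-truncated, i.e.\ the inclusion of a union of path components, and likewise for the transition maps $\Map_{\mathcal{C}}(C,X_J)\to\Map_{\mathcal{C}}(C,X_{J'})$ for $J\subset J'$. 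A filtered colimit of such component inclusions is again a component inclusion into $\Map_{\mathcal{C}}(C,X_I)$ (here is where filtered colimits commuting with finite limits is genuinely used), so the canonical map is automatically fully faithful on all homotopy groups, and your factorization argument supplies the missing $\pi_0$-surjectivity. With that substitution for your last paragraph the proof is complete; the slice equivalence $\mathcal{C}_{/X_I}\simeq\prod_{i\in I}\mathcal{C}_{/X_i}$ is a valid alternative packaging of the same facts, but as written it is asserted rather than used.
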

\qed

\begin{lemma}\label{zcon2}
Let $I \subset \mathbb{N}$ be a finite index set. If $R$ is Zariski connected,
 the canonical morphism
\[
 \coprod_{i \in I} \Map_{\Shv_{\widehat{\mathcal{S}}}(\CAlg^{\mathcal{G}})}(\Spec^{\mathcal{G}} R, \,
 B^{\mathcal{G}}GL_i) \to \Map_{\Shv_{\widehat{\mathcal{S}}}(\CAlg^{\mathcal{G}})}(\Spec^{\mathcal{G}} R, \,
 \coprod_{i \in I}B^{\mathcal{G}}GL_i),
\]
is an equivalence. 
\end{lemma}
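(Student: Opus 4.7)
The plan is to identify the classifying sheaves $B^{\mathcal{G}}GL_i$ with $(iProj)^{\simeq}$ via Proposition~\ref{bgl}, and then exploit the general fact that in an $\infty$-topos mapping out of a connected object commutes with finite coproducts, after checking that $\Spec^{\mathcal{G}} R$ is a connected object of $\Shv_{\widehat{\mathcal{S}}}(\CAlg^{\mathcal{G}})$ whenever $R$ is Zariski connected. This structural reduction avoids having to compute the sheafification of the presheaf coproduct $\coprod_i B^{\mathcal{G}}GL_i$ directly.

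First I would apply Proposition~\ref{bgl} to rewrite $B^{\mathcal{G}}GL_i \simeq (iProj)^{\simeq}$, so that by the Yoneda lemma the left-hand side becomes $\coprod_{i \in I}(\Mod_R^{iproj})^{\simeq}$. Next I would check that $\Spec^{\mathcal{G}} R$ is a connected (equivalently, indecomposable) object of the $\infty$-topos: any non-trivial coproduct decomposition $\Spec^{\mathcal{G}} R \simeq A \sqcup B$ with $A, B$ not the initial sheaf would, when restricted to the section corresponding to $id_R \in \Map_{\CAlg_R}(R,R)$, supply a covering sieve on $R$ by a pair of morphisms $R \to R_1$, $R \to R_2$ with $R_1 \otimes_R R_2 \simeq 0$; by the description of Zariski (resp.\ Nisnevich) covering sieves in Section~2, such data produces elements of $\pi_0 R$ exhibiting a product decomposition $R \simeq R_1 \times R_2$, which contradicts Zariski connectedness of $R$ by Lemma~\ref{zcon}. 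Finally I would invoke the general fact that in any $\infty$-topos finite coproducts are disjoint and universal, so that for a connected object $Y$ the canonical map
\[
\coprod_{i \in I}\Map(Y, X_i) \to \Map\bigl(Y, \coprod_{i \in I} X_i\bigr)
\]
is an equivalence: given $f : Y \to \coprod_i X_i$, universality yields a decomposition $Y \simeq \coprod_i (Y \times_{\coprod_j X_j} X_i)$, and connectedness of $Y$ forces all but one pullback to be initial, so $f$ factors uniquely through exactly one summand. Applied to $Y = \Spec^{\mathcal{G}} R$ and $X_i = B^{\mathcal{G}}GL_i$, this gives the claimed equivalence.

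The main obstacle is the middle step: pinning down the precise translation between a coproduct decomposition of the representable sheaf $\Spec^{\mathcal{G}} R$ and an algebraic product decomposition of $R$. One must carefully combine the explicit description of Zariski and Nisnevich covering sieves from Section~2 with the algebraic characterization in Lemma~\ref{zcon} to ensure that the two notions of non-triviality — Zariski non-connectedness of $R$ on the one hand, and existence of a non-trivial topos-theoretic decomposition of $\Spec^{\mathcal{G}} R$ on the other — actually coincide.
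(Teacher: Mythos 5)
Your proposal is correct in substance, but it takes a genuinely different route from the paper. The paper never uses topos-theoretic connectedness of $\Spec^{\mathcal{G}} R$: it notes that sheafification commutes with coproducts, so a section of $\bigl(\coprod_{i \in I} B^{\mathcal{G}}GL_i\bigr)(R)$ is represented by a finitely generated projective $R$-module with local ranks in $I$ (via Proposition~\ref{bgl}), then invokes Lemma~\ref{zcon}(iii) — over a Zariski connected $R$ every such module has constant rank — to split this groupoid as $\coprod_{i\in I}(\Mod_R^{iproj})^{\simeq}$, and concludes the unique factorization by Yoneda. You instead prove that $\Spec^{\mathcal{G}} R$ is a connected (non-initial, indecomposable) object of the $\infty$-topos and use disjointness and universality of coproducts to get the factorization; this is more structural, applies to an arbitrary coproduct of sheaves rather than just the $B^{\mathcal{G}}GL_i$ (so your use of Proposition~\ref{bgl} is cosmetic), and, once you also note that each inclusion $B^{\mathcal{G}}GL_i \to \coprod_j B^{\mathcal{G}}GL_j$ is a monomorphism, it yields the space-level equivalence rather than only a statement about components. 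What it costs is exactly the middle step you flag: a nontrivial decomposition $\Spec^{\mathcal{G}} R \simeq A \sqcup B$ does not directly give a covering sieve by a \emph{pair} of maps with $R \simeq R_1 \times R_2$; rather, the identity section lies in $A$ or $B$ only after refinement along a (finite) Zariski or Nisnevich cover, which you must group into two families, observe that cross tensor products vanish because they carry sections of the initial sheaf $A \times_{\Spec^{\mathcal{G}} R} B$, and then — using that the (étale, hence open) images of the two families are disjoint opens covering $\Spec \pi_0 R$ — extract a nontrivial idempotent of $\pi_0 R$, contradicting Lemma~\ref{zcon} (i)$\Leftrightarrow$(ii); you should also record that Zariski connectedness rules out $R \simeq 0$, so $\Spec^{\mathcal{G}} R$ is not initial, which your uniqueness argument needs. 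In effect you re-derive the topos-level content of Lemma~\ref{zcon}(i)$\Leftrightarrow$(ii), where the paper instead leans on (iii); filled in as above, your argument is a valid alternative proof.
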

\begin{proof}
To check that the canonical morphism is an equivalence, it suffices to 
show that, for any morphism $\phi$ in 
$\Map_{\Shv_{\widehat{\mathcal{S}}}(\CAlg^{\mathcal{G}})}(\Spec^{\mathcal{G}} R, \,
 \coprod_{i \in I}B^{\mathcal{G}}GL_i)$, there exists an index $j \in \mathbb{N}$
 which is uniquely determined by $\phi$, the morphism $\phi$ has the
 unique factorization 
\[
 \Spec^{\mathcal{G}} R \to B^{\mathcal{G}}GL_j \to \coprod_{i \in I}B^{\mathcal{G}}GL_i.  
\]

Since the sheafification commutes with coproducts, $\coprod_{i \in I}B^{\mathcal{G}}GL_i$ is equivalent to the sheafification of the presheaf given by
 $R \mapsto \coprod_{i \in I} (B^{\mathcal{G}}GL_i(R))$. (Note that the coproduct of
 sheaves are the sheafification of the objectwise coproduct as
 presheaves, and an functor obtained by objectwise coproduct of sheaves
 is not always a sheaf.) Therefore, an object of $(\coprod_{i \in I} B^{\mathcal{G}}GL_i)(R)$ can be represented by a finitely generated
 projective $R$-module whose local ranks are contained in $I$.

Let $I\Mod_R^{proj}$ be an $\infty$-category of finitely generated
 projective modules whose local rank is contained in $I$. 
Since $R$ is Zariski connected, by Lemma~\ref{zcon}(iii), we deduce that
 $I\Mod_R^{proj} \simeq \coprod_{\{ i \in I \}} \Mod_R^{iProj}$. 
Therefore, for any object of $(\coprod_{i \in I} B^{\mathcal{G}}GL_i)(R)$, there exists
 a unique $j \in I$ such that the object can be represented by a finitely
 generated projective $R$-module of rank $j$. 

By applying the Yoneda embedding~\cite[Section 5.1.3]{HT}, 
an object of $(\coprod_{i \in I} B^{\mathcal{G}}GL_i)(R)$ represented by a finitely
 generated projective $R$-module of rank $j$ corresponds to the morphism
 $\Spec^{\mathcal{G}} R \to \coprod_{i \in I}B^{\mathcal{G}}GL_i$ which has the unique factorization 
\[
 \Spec^{\mathcal{G}} R \to B^{\mathcal{G}}GL_j \to \coprod_{i \in I}B^{\mathcal{G}}GL_i.  
\]
\end{proof}

We consider a decomposition of an $\E$-ring $R$ by using the idempotent element in $\pi_0 R$. 

\begin{prop}\label{qclf}
Let $\Spec^{\mathcal{G}} R : (\CAlg^{\mathcal{G}})^{op} \to \widehat{\mathcal{S}}$ be a spectral scheme and $\Spec^{\mathcal{G}} R \to
 B^{\mathcal{G}}GL_n$ a morphism in
 $\Shv_{\widehat{\mathcal{S}}}(\CAlg^{\mathcal{G}})$. Let $B^{\mathcal{G}}GL$ be a sheaf $\coprod_{n \in \mathbb{N}} B^{\mathcal{G}}GL_n$. 
 For $I \subset J \subset \mathbb{N}$, we have the system $\coprod_{i \in I} B^{\mathcal{G}}GL_i \to \coprod_{j \in J} B^{\mathcal{G}}GL_j$ given by inclusions. 
 Then there is an
 equivalence of $\infty$-groupoids; 
\[
 \Map_{\Shv_{\widehat{\mathcal{S}}}(\CAlg^{\mathcal{G}})}(\Spec^{\mathcal{G}} R, \,
 B^{\mathcal{G}}GL) \simeq (\Mod_R^{\infty proj})^{\simeq}. 
\]
\end{prop}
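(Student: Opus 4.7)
The strategy is to reduce the computation of the mapping space into the infinite coproduct to finite coproducts via Proposition~\ref{321}, and then to reduce to the Zariski connected case handled by Lemma~\ref{zcon2} by decomposing $R$ along idempotents in $\pi_0 R$.

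First, since both the Zariski and Nisnevich geometries on $\CAlg^{cn}$ are finitary (\cite[Remark 2.2.8]{DAG5}), every covering of $\Spec^{\mathcal{G}} R$ admits a finite subcovering, so Proposition~\ref{321} applies to the family $\{B^{\mathcal{G}} GL_i\}_{i \in \mathbb{N}}$ with $C = \Spec^{\mathcal{G}} R$. This yields
\[
\Map_{\Shv_{\widehat{\mathcal{S}}}(\CAlg^{\mathcal{G}})}(\Spec^{\mathcal{G}} R,\, B^{\mathcal{G}}GL) \simeq \mathop{\mathrm{colim}}_{\substack{J \subset \mathbb{N} \\ \text{finite}}} \Map_{\Shv_{\widehat{\mathcal{S}}}(\CAlg^{\mathcal{G}})}(\Spec^{\mathcal{G}} R,\, \coprod_{j \in J} B^{\mathcal{G}}GL_j),
\]
reducing the problem to identifying each finite-coproduct mapping space on the right-hand side with a disjoint union of $\infty$-groupoids of projective modules.

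Second, for each finite $J$ I would analyze the mapping space using the decomposition of $R$ by orthogonal idempotents. A morphism $\Spec^{\mathcal{G}} R \to \coprod_{j \in J} B^{\mathcal{G}} GL_j$ amounts, since the coproduct of sheaves is the sheafification of the pointwise coproduct, to a system of orthogonal idempotents $\{e_j\}_{j \in J}$ in $\pi_0 R$ with $\sum_j e_j = 1$, hence a Zariski decomposition $\Spec R \simeq \coprod_{j \in J} \Spec R[e_j^{-1}]$, together with a section of $B^{\mathcal{G}} GL_j$ on each $\Spec R[e_j^{-1}]$. By construction each factor $R[e_j^{-1}]$ can be arranged to be Zariski connected, so Lemma~\ref{zcon2} applies, and combining Proposition~\ref{bgl} with the Yoneda lemma identifies the section with an object of $(\Mod_{R[e_j^{-1}]}^{jproj})^{\simeq}$. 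Gluing along the idempotent decomposition produces a finitely generated projective $R$-module whose local rank function takes values in $J$.

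Finally, taking the colimit over finite $J \subset \mathbb{N}$ assembles these data into the $\infty$-groupoid of finitely generated projective $R$-modules whose rank function takes only finitely many values, which is precisely $(\Mod_R^{\infty proj})^{\simeq}$. The main obstacle will be the second step: the idempotent decomposition must be shown to be functorial in the mapping space direction and compatible with the sheafification defining $\coprod_{j \in J} B^{\mathcal{G}} GL_j$. Concretely, one must check that distinct sheaves $B^{\mathcal{G}} GL_j$ and $B^{\mathcal{G}} GL_{j'}$ with $j \neq j'$ cannot admit a common section over any Zariski-connected base, which follows from Lemma~\ref{zcon} since rank is a locally constant invariant; this is what justifies passing from the ``locally in $J$'' description to a genuine coproduct decomposition indexed by systems of idempotents.
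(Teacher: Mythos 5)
Your proposal follows essentially the same route as the paper: Proposition~\ref{321} (via the finitary topology) reduces the problem to finite coproducts, a decomposition of $R$ along idempotents of $\pi_0 R$ into Zariski connected pieces together with Lemma~\ref{zcon2} lets you identify a map into $\coprod_{j \in J} B^{\mathcal{G}}GL_j$ with projective modules of local rank in $J$ via Proposition~\ref{bgl} and Yoneda, and the colimit over finite $J$ gives $(\Mod_R^{\infty proj})^{\simeq}$. The one soft spot --- that $R$ (or each factor $R[e_j^{-1}]$) can be arranged to be Zariski connected, which can fail when $\pi_0 R$ has infinitely many idempotents --- is shared with the paper's own appeal to the ``irreducible decomposition on $\pi_0$'', so your argument is on the same footing as the original.
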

\begin{proof}
By Proposition~\ref{321}, we have 
\[
 \mathop{\mathrm{colim}}_{I \subset \mathbb{N}} \Map_{\Shv_{\widehat{\mathcal{S}}}(\CAlg^{\mathcal{G}})}(\Spec^{\mathcal{G}} R, \, \coprod_{i \in I}B^{\mathcal{G}}GL_i) \simeq \Map_{\Shv_{\widehat{\mathcal{S}}}(\CAlg^{\mathcal{G}})}(\Spec^{\mathcal{G}} R, \, B^{\mathcal{G}}GL). 
\]

By decomposing $R$ with the Zariski connected
 $\E$-rings (given by the corresponding irreducible decomposition on
 $\pi_0$) and applying Lemma~\ref{zcon2}, for $I=\{1, \cdots, n\}$, we
 have the equivalence
\[
 \mathop{\mathrm{colim}}_{I \subset \mathbb{N}} \coprod_{i \in I} \Map_{\Shv_{\widehat{\mathcal{S}}}(\CAlg^{\mathcal{G}})}(\Spec^{\mathcal{G}} R, \,
 B^{\mathcal{G}}GL_i) \simeq \mathop{\mathrm{colim}}_{I \subset \mathbb{N}}
 \Map_{\Shv_{\widehat{\mathcal{S}}}(\CAlg^{\mathcal{G}})}(\Spec^{\mathcal{G}} R, \, \coprod_{i \in I}B^{\mathcal{G}}GL_i).  
\]
By Proposition~\ref{bgl}, we identify the left hand side with
\[
 \mathop{\mathrm{colim}}_{i \in \mathbb{N}}(\Mod^{iproj}_R)^{\simeq}
 \simeq (\Mod_R^{\infty Proj})^{\simeq}. 
\]
\end{proof}

\section{$K$-theory of $\Mod_R^{\infty proj}$}

\begin{defn}[cf. \cite{ba} 1.2,  \cite{BGT-End} Definition 2.1]\label{wcof}
Let $\mathcal{C}$ be a pointed $\infty$-category. A class of $w^{\infty}$-cofibrations is a class of morphisms in $\mathcal{C}$ which satisfies the following conditions:
\begin{enumerate}[(i)]
\item $* \to X$ is a $w^{\infty}$-cofibration for any object $X$,
\item The class of $w^{\infty}$-cofibrations includes weak equivalences, 
\item Any composition of $w^{\infty}$-cofibrations is a $w^{\infty}$-cofibration,  
\item For a $w^{\infty}$-cofibration $X \to Y$ and a morphism $X \to Z$, there
      exists a pushout square
\[ 
 \xymatrix@1{
X \ar[r] \ar[d] & Y \ar[d] \\
 Z \ar[r]_f & W,
} 
\]
in which the morphism $f$ is a $w^{\infty}$-cofibration. 
\end{enumerate}
We call such a pair of $\mathcal{C}$ and a $w^{\infty}$-cofibrations an $\infty$-category with $w^{\infty}$-cofibrations. 
\end{defn}
In terminology of \cite{BGT}, a $w^{\infty}$-cofibration is called a cofibration. In terminology  of \cite{ba1}, a $w^{\infty}$-cofibration is called an igressive morphism. 
\begin{rem}\label{spplit}
We say that a $w^{\infty}$-cofibration is a split
 $w^{\infty}$-cofibration if it is split as a morphism.  
\end{rem}


\begin{defn}\label{ssplit}
We make into $\Mod_R^{\infty proj}$ to be an
 $\infty$-category with $w^{\infty}$-cofibrations as follows. 

Declare a morphism $f: P_1 \to P_2$ in
$\Mod_R^{\infty proj}$ to be a $w^{\infty}$-cofibration if it is a morphism in $\Mod_R$ and the cofiber of $f$ is an object in
$\Mod_R^{\infty proj}$. 

Then, $\Mod_R^{proj}$ (resp. $\Mod_R^{\infty proj}$) is an
 $\infty$-category with $w^{\infty}$-cofibrations. 
\end{defn}

\begin{lemma}\label{ssplit2}
\begin{enumerate}[(i)]
\item A $w^{\infty}$-cofibration in $\Mod_R^{\infty proj}$ is always split, e.g., it is a
      split $w^{\infty}$-cofibration in Remark~\ref{spplit}. 
\item Let $R_1 \to R_2$ be a morphism of connective
$\E$-rings. Then, a functor $\Mod_{R_1}^{\infty proj} \to
\Mod_{R_2}^{\infty proj}$ given by the extension of coefficients is an exact functor in the sence of $K$-theory. 
\end{enumerate}
\end{lemma}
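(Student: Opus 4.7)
The plan for (i) is to exploit the lifting property of projective modules against effective epimorphisms, and for (ii) to observe that extension of scalars is a left adjoint that preserves the class of finitely generated projective modules of finite rank.

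For (i), suppose $f : P_1 \to P_2$ is a $w^{\infty}$-cofibration in $\Mod_R^{\infty proj}$, so that by Definition~\ref{ssplit} the cofiber $P_3 := \mathrm{cofib}(f)$ also lies in $\Mod_R^{\infty proj}$. Since $\Mod_R$ is stable we obtain a cofiber/fiber sequence $P_1 \to P_2 \to P_3$. Because $R$ is connective and $P_1$ is finitely generated projective, $P_1$ is connective, hence $\pi_{-1} P_1 = 0$; the long exact sequence of homotopy groups then yields that $\pi_0 P_2 \to \pi_0 P_3$ is surjective, so $P_2 \to P_3$ is an effective epimorphism of connective $R$-modules. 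By the defining property of projective objects in $\Mod_R$ (cf.~\cite[Section 7.2.2]{HA}), the mapping space $\Map_{\Mod_R}(P_3, -)$ carries effective epimorphisms to surjections on $\pi_0$; hence $\mathrm{id}_{P_3}$ lifts to a section $s : P_3 \to P_2$. The induced map $P_1 \oplus P_3 \to P_2$ is then an equivalence, so $f$ is split in the sense of Remark~\ref{spplit}.

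For (ii), the extension-of-scalars functor $(-) \otimes_{R_1} R_2 : \Mod_{R_1} \to \Mod_{R_2}$ is left adjoint to the restriction of scalars (\cite[Section 4.6.2]{HA}), so it preserves all small colimits and in particular zero objects, cofibers, and pushouts. It sends $R_1^{\oplus n}$ to $R_2^{\oplus n}$ and commutes with retracts, hence it maps $\Mod_{R_1}^{nproj}$ into $\Mod_{R_2}^{nproj}$ and therefore restricts to a functor $\Mod_{R_1}^{\infty proj} \to \Mod_{R_2}^{\infty proj}$. Given a $w^{\infty}$-cofibration $P_1 \to P_2$ in $\Mod_{R_1}^{\infty proj}$ with cofiber $P_3 \in \Mod_{R_1}^{\infty proj}$, its image has cofiber $R_2 \otimes_{R_1} P_3 \in \Mod_{R_2}^{\infty proj}$, so it remains a $w^{\infty}$-cofibration, and pushouts along such morphisms are preserved as colimits. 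This is exactness in the sense of $K$-theory.

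The main obstacle will be in (i): the delicate point is to confirm, in the $\infty$-categorical setting, that projectivity of $P_3$ in the sense of Definition~\ref{p0} is equivalent to the lifting property against effective epimorphisms on $\pi_0$, and to verify that our cofiber sequence yields such an effective epimorphism $P_2 \to P_3$. Both ingredients are supplied by the $t$-structure on connective $R$-modules together with the characterization of projective modules in \cite[Section 7.2.2]{HA}; once they are in place, the splitting and the preservation statement for (ii) are essentially formal.
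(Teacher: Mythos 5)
Your proof is correct and follows essentially the same route as the paper: part (i) amounts to the splitting of cofiber sequences of finitely generated projective modules, which the paper simply cites as \cite[Proposition 7.2.2.6 (5)]{HA} and you re-derive from the lifting property of projectives against maps surjective on $\pi_0$, and part (ii) rests, as in the paper, on the facts that base change preserves cofiber sequences and (local) rank. The extra detail you supply (connectivity giving $\pi_{-1}P_1=0$, the section inducing $P_1\oplus P_3\simeq P_2$) is a faithful unwinding of the cited result rather than a different argument.
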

\begin{proof}
By \cite[Proposition 7.2.2.6 (5)]{HA}, a cofiber sequence of finitely
 generated projective modules is always split up to homotopy. Thus, (i)
 holds. 

Since the tensor product commutes with the cofiber sequences and
 preserves rank, the extension of coefficients is exact.  
\end{proof}

%

\begin{rem}
In this paper, we consider the connective $K$-theory. We can regard the $K$-theory spectrum as a $K$-theory space.  
\end{rem}

Let $\mathcal{C}$ be a pointed additive $\infty$-category. 
We recall an $\infty$-version of the additivity
theorem~\cite[Theorem 1.8.7]{ww} given by \cite{FLP}. We state the
theorem as in Lurie's unpublished note~\cite{Lurielec}. 

Recall the notion of a split
$w^{\infty}$-cofibration from Remark~\ref{spplit} and the notion of the
group completion. 
We will apply the following theorem for $\Mod_R^{\infty
proj}$ whose $w^{\infty}$-cofibrations defined in Definition~\ref{ssplit} are split
$w^{\infty}$-cofibrations by Lemma~\ref{ssplit2}. 

\begin{thm}[\cite{FLP}, cf. \cite{Lurielec} Theorem 10]\label{addit}
Let $\mathcal{C}$ be a pointed additive $\infty$-category with split $w^{\infty}$-cofibrations, and $\mathcal{C}^{\simeq}$ be the maximal $\infty$-groupoid of $\mathcal{C}$ as in (\ref{shf}). Let $K(\mathcal{C})$
 be the algebraic $K$-theory and $\Omega B(\mathcal{C}^{\simeq})$ the group
 completion of $\mathcal{C}^{\simeq}$. 

Then, there is an equivalence 
\begin{equation}\label{luria}
 \Omega B (\mathcal{C}^{\simeq}) \to
 K(\mathcal{C})
\end{equation}
 of spases. Here, $\Omega$ and $B$ are defined in Section $4$.  
\end{thm}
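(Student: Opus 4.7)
The plan is to identify $|S_\bullet(\mathcal{C})^\simeq|$ with the classifying space $B(\mathcal{C}^\simeq)$ of the $\infty$-monoid $\mathcal{C}^\simeq$ under the direct sum $\oplus$. Once this identification is established, looping yields
\[
 K(\mathcal{C}) = \Omega|S_\bullet(\mathcal{C})^\simeq| \simeq \Omega B(\mathcal{C}^\simeq),
\]
which is the desired equivalence, with the right-hand side further interpreted as the group completion of $\mathcal{C}^\simeq$ by Lemma~\ref{yy}.

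First I would construct, for each $n$, the map
\[
 \varphi_n \colon S_n(\mathcal{C})^\simeq \longrightarrow (\mathcal{C}^\simeq)^n
\]
sending a filtration $0 \simeq X_0 \to X_1 \to \cdots \to X_n$ to the tuple of successive cofibers $(X_1/X_0, \ldots, X_n/X_{n-1})$, and show that $\varphi_n$ is an equivalence of $\infty$-groupoids. The underlying assertion is that in a pointed additive $\infty$-category with split $w^\infty$-cofibrations, the $\infty$-groupoid of $n$-step filtrations (with all choices of subquotients) is equivalent, via the cofiber map, to the $\infty$-groupoid of $n$-tuples of objects. An explicit section of $\varphi_n$ sends $(C_1, \ldots, C_n)$ to the filtration $X_j = C_1 \oplus \cdots \oplus C_j$ with its canonical direct-sum inclusions.

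Next I would check that $\varphi_\bullet$ matches the simplicial structure on $S_\bullet(\mathcal{C})^\simeq$ with the bar construction $B_\bullet(\mathcal{C}^\simeq)$ of Definition~\ref{B} for the monoid $(\mathcal{C}^\simeq, \oplus)$. The outer faces of $S_\bullet$ drop the first or last stage of the filtration, corresponding to dropping a coordinate in the bar construction. An inner face $d_i$ replaces $X_{i-1} \to X_i \to X_{i+1}$ by $X_{i-1} \to X_{i+1}$, which on successive cofibers replaces $(C_i, C_{i+1})$ by $C_i \oplus C_{i+1}$; this is exactly the multiplication in the bar construction. Degeneracies insert a zero object, corresponding to inserting the unit. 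Taking geometric realizations then gives $|S_\bullet(\mathcal{C})^\simeq| \simeq B(\mathcal{C}^\simeq)$, and looping completes the proof.

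The main obstacle is to make the $\infty$-categorical version of the claim ``a split $n$-step filtration is determined by its successive cofibers'' precise --- namely, that the space of such filtrations with prescribed cofibers is contractible rather than merely nonempty. By induction on $n$, this reduces to the case of a single split cofiber sequence $A \to X \to B$, where one must show that the $\infty$-groupoid of such sequences is equivalent under the cofiber map to the $\infty$-groupoid of pairs $(A,B)$ in a pointed additive $\infty$-category. The splitness hypothesis on the $w^\infty$-cofibrations (available for $\Mod_R^{\infty proj}$ by Lemma~\ref{ssplit2}(i)) is what forces this contractibility of choices, and the coherent form of the statement is the content of the $\infty$-categorical additivity theorem of \cite{FLP}.
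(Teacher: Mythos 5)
You are proving a statement the paper itself does not prove: Theorem~\ref{addit} is imported verbatim from \cite{FLP} and Lurie's lecture notes, so there is no internal proof to compare with; the question is whether your sketch would actually establish it, and it would not in its present form. The central step --- that the successive-cofiber map $\varphi_n \colon S_n(\mathcal{C})^{\simeq} \to (\mathcal{C}^{\simeq})^n$ is an equivalence of $\infty$-groupoids --- is false already for $n=2$, even with split $w^{\infty}$-cofibrations. The fiber of $\varphi_2$ over $(A,B)$ is the space of cofiber sequences $A \to X \to B$ with identifications of the two ends; splitness makes this space nonempty and connected, but not contractible. Its loop space at the split extension is equivalent to $\Map_{\mathcal{C}}(B,A)$: concretely, for finitely generated projective modules over a discrete ring the groupoid of split short exact sequences with fixed ends is $B\Hom(B,A)$, the automorphisms being the unipotent maps $\mathrm{id}+h$ of $A\oplus B$ with $h\in \Hom(B,A)$. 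So $S_\bullet(\mathcal{C})^{\simeq}$ is \emph{not} levelwise the bar construction of $(\mathcal{C}^{\simeq},\oplus)$, and the existence of your direct-sum section does not remedy this (a section does not force an equivalence).

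Your closing appeal to \cite{FLP} mischaracterizes what additivity says and therefore does not patch the gap: the additivity theorem asserts that the sub- and quotient-functors induce an equivalence $K(S_2\mathcal{C}) \simeq K(\mathcal{C})\times K(\mathcal{C})$, i.e.\ an equivalence \emph{after} applying $|S_\bullet(-)^{\simeq}|$ (or on $K$-theory spaces), not that the groupoid-level map $S_2(\mathcal{C})^{\simeq} \to \mathcal{C}^{\simeq}\times\mathcal{C}^{\simeq}$ is an equivalence; the latter is strictly stronger and, as above, false. The correct route (Waldhausen 1.8, Lurie's lecture, and the quasicategorical arguments built on \cite{FLP}) keeps your comparison map --- the direct-sum filtrations do give a simplicial map from the bar construction $B_\bullet(\mathcal{C}^{\simeq})$ to $S_\bullet(\mathcal{C})^{\simeq}$ --- but then invokes additivity (or a group-completion argument) to show that this map becomes an equivalence after geometric realization and looping, despite failing levelwise. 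That realization-level argument is the actual content of the theorem and is the piece missing from your proposal.
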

\begin{cor}\label{add}
We have $\Omega B ((\Mod_R^{\infty proj})^{\simeq}) \simeq
 K(\Mod_R^{\infty proj})$. 
\end{cor}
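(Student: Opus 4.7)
The plan is to reduce the corollary directly to the additivity theorem (Theorem~\ref{addit}) applied to $\mathcal{C} = \Mod_R^{\infty proj}$. To do this, I need to check that $\Mod_R^{\infty proj}$ satisfies the hypotheses of Theorem~\ref{addit}, namely that it is a pointed additive $\infty$-category equipped with split $w^{\infty}$-cofibrations.

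First, I would note that $\Mod_R^{\infty proj}$ is a full $\infty$-subcategory of the stable $\infty$-category $\Mod_R$ containing the zero module, so it is pointed, and biproducts exist in $\Mod_R$ and preserve the property of being projective of finite rank, making $\Mod_R^{\infty proj}$ additive. Next, the class of $w^{\infty}$-cofibrations in $\Mod_R^{\infty proj}$ is specified in Definition~\ref{ssplit} as those morphisms whose cofiber lies again in $\Mod_R^{\infty proj}$.

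The key verification is that every such $w^{\infty}$-cofibration is split; this is precisely the content of Lemma~\ref{ssplit2}(i), which follows from \cite[Proposition 7.2.2.6(5)]{HA} (cofiber sequences of finitely generated projective modules split up to homotopy). With this in hand, the hypotheses of Theorem~\ref{addit} are met, so the theorem yields the equivalence
\[
 \Omega B\bigl((\Mod_R^{\infty proj})^{\simeq}\bigr) \simeq K(\Mod_R^{\infty proj}),
\]
as desired.

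There is no real obstacle here: the corollary is essentially an instance of the additivity theorem, and all the ingredients (pointedness, additivity, the split property of $w^{\infty}$-cofibrations) have already been established in the preceding definitions and lemmas. The only thing to be careful about is ensuring that the class of morphisms used in forming $K(\Mod_R^{\infty proj})$ (i.e.\ the $w^{\infty}$-cofibrations of Definition~\ref{ssplit}) matches the class used on the left-hand side of the equivalence in Theorem~\ref{addit}, which is immediate from Lemma~\ref{ssplit2}(i).
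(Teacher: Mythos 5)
Your proposal is correct and follows essentially the same route as the paper: both verify that $\Mod_R^{\infty proj}$ is pointed, additive, and has split $w^{\infty}$-cofibrations (via Lemma~\ref{ssplit2}(i)), and then invoke Theorem~\ref{addit}. Your write-up simply spells out these hypothesis checks in more detail than the paper does.
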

\begin{proof}
The $\infty$-category $\Mod_R^{\infty proj}$
 (resp. $\Mod_R^{proj}$) is additive, pointed by $0$,
 with split $w^{\infty}$-cofibrations. Thus, Theorem~\ref{addit} can be applied. 
\end{proof}
\section{Proof of Theorem~\ref{main1}}
Let us keep the notation explained in the previous sections. 
Now we prove Theorem~\ref{main1}. 

\begin{defn}\label{Bgl0}
Let $B^{\mathcal{G}}GL = \coprod_{n \in \mathbb{N}} B^{\mathcal{G}}GL_n$. 
We define a morphism $t: B^{\mathcal{G}}GL \times B^{\mathcal{G}}GL \to B^{\mathcal{G}}GL$ by the morphism induced from the assignment
\[
 B^{\mathcal{G}}GL_n \times B^{\mathcal{G}}GL_m \to B^{\mathcal{G}}GL_{n+m}
\]
 which sends a pair
 $(P, Q)$ of projective modules of rank $n$ and $m$ respectively to the
 projective module $P \oplus Q$ of rank $n + m$ under the identification
 of Proposition~\ref{bgl}. 
\end{defn}
\begin{prop}\label{Bgl}
Let $B^{\mathcal{G}}GL = \coprod_{n \in \mathbb{N}} B^{\mathcal{G}}GL_n$. 
Then, $B^{\mathcal{G}}GL$ becomes a commutative $\infty$-monoid by the morphism $t$ defined in Definition~\ref{Bgl0} as the multiplication. 
\end{prop}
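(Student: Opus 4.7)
The strategy is to realize $B^{\mathcal{G}}GL$ as the maximal $\infty$-groupoid of the symmetric monoidal $\infty$-category of finite-rank projective modules under direct sum, and to deduce its commutative $\infty$-monoid structure from the additivity of $\Mod_R^{\infty proj}$.

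First, I would combine Proposition~\ref{qclf} with the Yoneda embedding to identify the value of $B^{\mathcal{G}}GL$ at $\Spec^{\mathcal{G}} R$ with $(\Mod_R^{\infty proj})^{\simeq}$. The assignment $R \mapsto (\Mod_R^{\infty proj})^{\simeq}$ is functorial in $R$ via base change, and one checks exactly as in Lemma~\ref{53} (applied degreewise and then taking the coproduct over all finite ranks) that it yields an $\widehat{\mathcal{S}}$-valued sheaf on $\CAlg^{\mathcal{G}}$ whose sheafification agrees with $B^{\mathcal{G}}GL$.

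Next, I would exploit the fact that $\Mod_R^{\infty proj}$ is an additive $\infty$-category, and so direct sum $\oplus$ promotes it to a symmetric monoidal $\infty$-category: concretely, there is a functor $\N(\Fin) \to \widehat{\Cat{\infty}}$ which at $\langle 1 \rangle$ takes the value $\Mod_R^{\infty proj}$ and satisfies the Segal condition by the universal property of biproducts. Applying the right adjoint $(-)^{\simeq}$ from (\ref{shf}), which preserves finite products, yields a commutative $\infty$-monoid structure on $(\Mod_R^{\infty proj})^{\simeq}$ in the sense of Definition~\ref{mono1}, with multiplication induced by $(P,Q) \mapsto P \oplus Q$. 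Functoriality of base change in $R$ (which commutes with direct sums and preserves the condition of being finite-rank projective) upgrades this to an object of $\Fun^{\prod}((\CAlg^{\mathcal{G}})^{op}, Mon(\widehat{\mathcal{S}}))$, which by Lemma~\ref{ggn} is the same as an object of $Mon(\Fun^{\prod}((\CAlg^{\mathcal{G}})^{op}, \widehat{\mathcal{S}}))$.

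I would then transfer this monoid structure across the sheafification: since sheafification is left exact and thus preserves finite products (as used in the proof of Lemma~\ref{sg}), it sends the commutative $\infty$-monoid structure to one in $Mon(\Shv_{\widehat{\mathcal{S}}}(\CAlg^{\mathcal{G}}))$, whose underlying sheaf is $B^{\mathcal{G}}GL$ by the first paragraph. To finish, I would check that the multiplication obtained this way is equivalent to the morphism $t$ of Definition~\ref{Bgl0}: unwinding the identifications of Proposition~\ref{bgl} and Proposition~\ref{qclf}, the induced map on $\Spec^{\mathcal{G}} R$-points sends $(P,Q) \in (\Mod_R^{n\,proj})^{\simeq} \times (\Mod_R^{m\,proj})^{\simeq}$ to $P \oplus Q \in (\Mod_R^{(n+m)\,proj})^{\simeq}$, which is precisely the definition of $t$.

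The main subtlety I expect is coherently tracking the Segal condition through the chain of equivalences (symmetric monoidal $\Mod_R^{\infty proj}$, passage to maximal $\infty$-groupoid, presheaf of monoids, sheafification). This should ultimately cause no trouble since each of the three operations preserves finite products, so the Segal identities $M(\langle n\rangle) \simeq M(\langle 1\rangle)^n$ inherited from the biproduct structure on $\Mod_R^{\infty proj}$ survive at every stage.
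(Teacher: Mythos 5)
Your proposal is correct in substance and follows the same underlying idea as the paper: the multiplication on $B^{\mathcal{G}}GL$ is direct sum of finite-rank projective modules under the identification of Proposition~\ref{bgl} (resp.\ Proposition~\ref{qclf}). The difference is one of rigor rather than of route. The paper's proof is a single sentence: $P\oplus Q$ has rank $n+m$ and the assignment is ``obviously commutative and associative,'' i.e.\ it only verifies the monoid axioms at the level of the homotopy category. You instead produce the coherent structure demanded by Definition~\ref{mono1}: the direct-sum (cocartesian) symmetric monoidal structure on the additive $\infty$-category $\Mod_R^{\infty proj}$ gives a $\mathcal{F}\mathrm{in}_*$-diagram satisfying the Segal condition, this is carried to spaces by the product-preserving functor $(-)^{\simeq}$ of (\ref{shf}), to presheaves of commutative monoids by functoriality of base change together with Lemma~\ref{ggn}, and finally to $Mon(\Shv_{\widehat{\mathcal{S}}}(\CAlg^{\mathcal{G}}))$ by left exactness of sheafification exactly as in Lemma~\ref{sg} and Definition~\ref{0mg}; your last step matching the resulting multiplication with $t$ of Definition~\ref{Bgl0} is the point the paper takes for granted. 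What your approach buys is precisely the coherence data (a genuine commutative $\infty$-monoid, not just a homotopy-commutative binary operation), which is what is actually needed later when Proposition~\ref{gpcmp} is applied to $B^{\mathcal{G}}GL$.

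One inaccuracy to fix: in your first paragraph you assert that $R\mapsto(\Mod_R^{\infty proj})^{\simeq}$ is itself a sheaf, ``checked as in Lemma~\ref{53}.'' This is not so: condition (ii) of Definition~\ref{2614} fails for the union over all ranks, since a product of modules of different constant ranks over a product of rings is finitely generated projective but not of constant rank; equivalently, the objectwise coproduct $\coprod_n(nProj)^{\simeq}$ of sheaves need not be a sheaf, as the paper itself remarks in the proof of Lemma~\ref{zcon2}. This does not damage your argument, because you sheafify in the following step anyway and sheafification commutes with coproducts and finite products, so the sheafified presheaf of monoids still has underlying sheaf $\coprod_n B^{\mathcal{G}}GL_n = B^{\mathcal{G}}GL$; but the claim as stated should be weakened to ``a presheaf whose sheafification is $B^{\mathcal{G}}GL$.''
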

\begin{proof}
For projective modules $P$ and $Q$ of rank $n$ and $m$ respectively, $P \oplus Q$ is the projective module of rank $n + m$. This assighment is obviously commutative and associative. 
\end{proof}
\begin{thm}\label{m1}
Let $\CAlg^{\mathcal{G}}$ be the $\infty$-category $\CAlg^{cn}$ equipped
 with Zariski or Nisnevich topology defined in Section $2$. 
Let $B^{\mathcal{G}}GL = \coprod_{n \in \mathbb{N}} B^{\mathcal{G}}GL_n$
 and $\Omega B^{\mathcal{G}} (B^{\mathcal{G}}GL)$ the group completion as a sheaf on $\CAlg^{\mathcal{G}}$. 
Note that $B^{\mathcal{G}}GL$ becomes a commutative $\infty$-monoid as in Proposition~\ref{Bgl}. 

There is an equivalence of $\infty$-groupoids:
\[
 \Map_{\Shv_{\widehat{\mathcal{S}}}(\CAlg^{\mathcal{G}})}(\Spec^{\mathcal{G}} R, \, \Omega B^{\mathcal{G}}  (B^{\mathcal{G}}GL)) = \widetilde{K}^{\mathcal{G}}(\Mod_R^{\infty proj}), 
\]
where $\widetilde{K}$ is the sheafification of $K$ defined by (\ref{func}). 
\end{thm}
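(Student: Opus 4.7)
The plan is to chase the definitions and assemble the previously established equivalences by sheafification and Yoneda. I would first apply the Yoneda lemma to the left-hand side: since $\Omega B^{\mathcal{G}} (B^{\mathcal{G}}GL)$ is a sheaf in $\Shv_{\widehat{\mathcal{S}}}(\CAlg^{\mathcal{G}})$ by construction in Definition~\ref{0mg}, the mapping space $\Map_{\Shv_{\widehat{\mathcal{S}}}(\CAlg^{\mathcal{G}})}(\Spec^{\mathcal{G}} R, \Omega B^{\mathcal{G}} (B^{\mathcal{G}}GL))$ is equivalent to the value $\Omega B^{\mathcal{G}}(B^{\mathcal{G}}GL)(R)$. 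By Definition~\ref{0mg}, this value is obtained by taking the pointwise group completion $\Omega B^{\prod}$ of $B^{\mathcal{G}}GL$ viewed as an object of $Mon(\Fun^{\prod}((\CAlg^{\mathcal{G}})^{op}, \widehat{\mathcal{S}}))$ and then applying the sheafification.

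Next I would identify the pointwise group completion with the $K$-theory presheaf. For each connective $\E$-ring $R$, Proposition~\ref{qclf} together with Yoneda gives $B^{\mathcal{G}}GL(R) \simeq (\Mod_R^{\infty proj})^{\simeq}$ as commutative $\infty$-monoids (the monoid structure on both sides being induced by direct sum, as in Proposition~\ref{Bgl}). Applying the pointwise group completion $\Omega B$ of Lemma~\ref{yy} yields
\[
\Omega B^{\prod}(B^{\mathcal{G}}GL)(R) \simeq \Omega B((\Mod_R^{\infty proj})^{\simeq}),
\]
and Corollary~\ref{add} further identifies the right-hand side with $K(\Mod_R^{\infty proj})$. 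So as presheaves on $\CAlg^{\mathcal{G}}$ there is an equivalence $\Omega B^{\prod}(B^{\mathcal{G}}GL) \simeq K(\Mod_{(-)}^{\infty proj})$.

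Finally, applying the sheafification functor $\widetilde{(-)}^{\mathcal{G}}$ to both sides of this equivalence and using the definition of $\Omega B^{\mathcal{G}}$ in Definition~\ref{0mg}, one obtains
\[
\Omega B^{\mathcal{G}}(B^{\mathcal{G}}GL) \simeq \widetilde{K}^{\mathcal{G}}(\Mod_{(-)}^{\infty proj})
\]
in $\Shv_{\widehat{\mathcal{S}}}(\CAlg^{\mathcal{G}})$. Evaluating at $R$ and combining with the initial Yoneda step completes the chain of equivalences.

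The main obstacle is ensuring that the equivalences of Proposition~\ref{qclf} and Corollary~\ref{add} are natural in $R$, i.e., that they assemble into an equivalence of presheaves and not merely a levelwise one. Naturality of Proposition~\ref{qclf} follows from the Yoneda argument used in its proof; naturality of Corollary~\ref{add} requires that the additivity-type equivalence of Theorem~\ref{addit} is functorial with respect to the exact base change functors $\Mod_{R_1}^{\infty proj} \to \Mod_{R_2}^{\infty proj}$ of Lemma~\ref{ssplit2}(ii), which both the $K$-theory and the group completion constructions respect. Once this naturality is secured, the rest of the argument is a purely formal manipulation of adjoint functors and sheafification.
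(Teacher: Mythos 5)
Your proposal is correct and follows essentially the same route as the paper's proof: Yoneda identifies the mapping space with the value of $\Omega B^{\mathcal{G}}(B^{\mathcal{G}}GL)$ at $R$, the pointwise identification $B^{\mathcal{G}}GL(R)\simeq(\Mod_R^{\infty proj})^{\simeq}$ (Proposition~\ref{qclf}/Proposition~\ref{bgl}) combined with Corollary~\ref{add} identifies the objectwise group completion with the $K$-theory presheaf, and the compatibility of sheafification with group completion (Definition~\ref{0mg}, Proposition~\ref{gpcmp}) finishes the argument. Your explicit attention to naturality in $R$ is a point the paper leaves implicit, but it does not change the structure of the proof.
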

\begin{proof} 
By Proposition~\ref{bgl}, we have an equivalence 
\[
 \Omega B (B^{\mathcal{G}}GL(R)) \simeq \Omega B ((\Mod_{R}^{\infty proj})^{\simeq}),
\]
where $(-)^{\simeq}$ denotes the maximal $\infty$-groupoid~(\ref{shf}). 
Since all $w^{\infty}$-cofibrations in $\Mod_R^{\infty proj}$ are split and the
 homotopy category of $\Mod^{proj}_R$ is additive, by
 Corollary~\ref{add}, we obtain that $\Omega B((\Mod_{R}^{\infty
 proj})^{\simeq})$ is equivalent to the algebraic $K$-theory
$K(\Mod_R^{\infty proj})$ given by $S_{\bullet}$ construction. 

On the other hand, we have an equivalence induced by Yoneda embedding 
\[
 \Map_{\Shv_{\widehat{\mathcal{S}}}(\CAlg^{\mathcal{G}})}(\Spec^{\mathcal{G}} R, \, \Omega B^{\mathcal{G}} (B^{\mathcal{G}}GL)) \simeq (\Omega B ^{\mathcal{G}} B^{\mathcal{G}}GL)(R). 
\]
By virtue of Proposition~\ref{Bgl}, we can apply Proposition~\ref{gpcmp}
 to the commutative $\infty$-monoid sheaf $B^{\mathcal{G}}GL$. The sheafification of the objectwise group completion functor
 $R \mapsto \Omega B(B^{\mathcal{G}}GL(R))$ is equivalent to the group completion of the sheaf given
by the assignment $R \mapsto (\Omega B^{\mathcal{G}}  B^{\mathcal{G}}GL)(R)$. Thus, we
 have an equivalence $(\Omega B^{\mathcal{G}}  B^{\mathcal{G}}GL)(R) \simeq \Omega B (B^{\mathcal{G}}GL(R))$ after the sheafification.   
\end{proof}

\bibliographystyle{amsplain} \ifx\undefined\bysame
\newcommand{\bysame}{\leavemode\hbox to3em{\hrulefill}\,} \fi
\begin{bibdiv}
\begin{biblist}

\bib{ba1}{article}{
   author={Barwick, C.},
   title={On the algebraic $K$-theory of higher categories},
   journal={Preprint, available at arxiv:1204.3607}
   date={2014}
}

\bib{ba}{article}{
   author={Barwick, C.},
   title={On the exact $\infty$-categories and the theorem of the heart},
   journal={Preprint, available at arxiv:1212.5232v4}
   date={2014}
}

\bib{BGT}{article}{
   author={Blumberg, Andrew J.},
   author={Gepner, David},
   author={Tabuada, Gon{\c{c}}alo},
   title={A universal characterization of higher algebraic $K$-theory},
   journal={Geom. Topol.},
   volume={17},
   date={2013},
   number={2},
   pages={733--838},
   issn={1465-3060},
   review={\MR{3070515}},
   doi={10.2140/gt.2013.17.733},
}

\bib{BGT-End}{article}{
   author={Blumberg, A.},
author={Gepner, D.},
author={Tabuada, G.},
   title={The algebraic $K$-theory endomorphism},
   journal={available at arxiv:1302.1214}
   date={2014}
}

\bib{FLP}{article}{
   author={Fiore, T.},
author={Luck, W.},
author={Pieper, M.},
   title={Waldhausen Additivity : Classical and Quasicategorical},
   journal={Preprint, available at arXiv:1207.6613}
   date={2012}
}

\bib{GGN}{article}{
   author={Gepner, D.},
author={Groth, M.},
author={Nikolaus, T.},
   title={Universality of multiplicative infinite loop space machines},
   journal={Preprint, available at arXiv:1305.4550v1}
   date={2013}
}

\bib{kodsag}{article}{
   author={Kodjabachev, D.},
   author={Sagave, S.},
   title={Strictly commutative models for $\E$ quasi-categories},
   journal={Preprint, available at arXiv:1408.0116v2}
   date={2014}
}

\bib{HT}{book}{
   author={Lurie, J.},
   title={Higher topos theory},
   series={Annals of Mathematics studies},
   volume={170},
   publisher={Princeton University Press},
   date={2009}
   pages={xv+925},
}

\bib{HA}{article}{
   author={Lurie, J.},
   title={Higher algebra},
   journal={Preprint, available at www.math.harvard.edu/lurie}
   date={2011}
}

\bib{DAG8}{article}{
   author={Lurie, J.},
   title={Quasi-Coherent Sheaves and Tannaka Duality Theorems},
   journal={Preprint, available at www.math.harvard.edu/lurie}
   date={2011}
}

\bib{DAG11}{article}{
   author={Lurie, J.},
   title={Decsent theorems},
   journal={Preprint, available at www.math.harvard.edu/lurie}
   date={2011}
}

\bib{DAG5}{article}{
   author={Lurie, J.},
   title={Structured spaces},
   journal={Preprint, available at www.math.harvard.edu/lurie}
   date={2011}
}
\bib{DAG7}{article}{
   author={Lurie, J.},
   title={Spectral Schemes},
   journal={Preprint, available at www.math.harvard.edu/lurie}
   date={2011}
}

\bib{Lurielec}{article}{
   author={Lurie, J.},
   title={Additive $K$-Theory (Lecture 18)},
   journal={lecture available at http://www.math.harvard.edu/~lurie/281notes/Lecture18-Rings.pdf},
   date={2014},
}

\bib{Lurielec19}{article}{
   author={Lurie, J.},
   title={Algebraic $K$-Theory of Ring Spectra (Lecture 19)},
   journal={available at http://www.math.harvard.edu/
~lurie/281notes/Lecture19-Rings.pdf},
   date={2014},
}

\bib{MV}{article}{
    author={Morel, F.},
    author={Voevodsky, V.},
    title={{${\bf A}\sp 1$}-homotopy theory of schemes},
    journal={Institut des Hautes \'Etudes Scientifiques. Publications
              Math\'ematiques},
    volume={90},
    date={1999},
    pages={45--143 (2001)}
 }  


\bib{ww}{article}{
   author={Waldhausen, F.},
   title={Algebraic $K$-theory of spaces},
      journal={Lecture Notes in Math.},
      volume={1126},
      publisher={Springer, Berlin},
   date={1985},
   pages={318--419},
}
 
\end{biblist}		
\end{bibdiv}
\end{document}